\theoremstyle{definition}
\newtheorem{definition}{Definition}
\theoremstyle{theorem}
\newtheorem{proposition}[definition]{Proposition}
\newtheorem{lemma}[definition]{Lemma}
\newtheorem{theorem}[definition]{Theorem}
\numberwithin{equation}{section}
\numberwithin{definition}{section}
\theoremstyle{remark}
\newtheorem{remark}[definition]{Remark}
\newtheorem{example}[definition]{Example}
\title[Extinction and explosion in logistic CSBPs ]{Local explosions and extinction in continuous-state branching processes with  logistic competition}
\author{Cl\'ement Foucart}\thanks{foucart@math.univ-paris13.fr, Universit\'e Paris 13,  Laboratoire Analyse, G\'eom\'etrie \& Applications UMR 7539 Institut Galil\'ee and Ecole Polytechnique, Centre de Math\'ematiques Appliqu\'ees, UMR 7641}
\newcommand{\zmin}{\ensuremath{Z^{\mathrm{min}}}}
\newcommand{\ddr}{\mathrm{d}}
\keywords{{Continuous-state branching process}, {competition}, {explosion}, {extinction}, {Laplace duality}, {Siegmund duality}, {local time}, {excursion}.}
\subjclass[2020]{60J50,60J80,60J55,60J70,92D25}
\date{30/09/2025}
\begin{document}

\begin{abstract}
We study by duality methods the extinction and explosion times of continuous-state branching processes with logistic competition (LCSBPs) and identify the local time at $\infty$ of the process when its boundary $\infty$ is regular reflecting. The main idea is to introduce a certain ``bidual" process $V$ of the LCSBP $Z$.  The latter is the Siegmund dual process of the process $U$, that was introduced in \cite{MR3940763}  as the Laplace dual of $Z$. By using both dualities, we shall relate local explosions and the extinction of $Z$ to local extinctions and the explosion of the process $V$. The process $V$ being a one-dimensional diffusion on $[0,\infty]$, many results on diffusions can be used and transfered to $Z$. A concise study of Siegmund duality for one-dimensional diffusions and their boundaries is also provided.
\end{abstract}

\maketitle

\section{Introduction}
Continuous-state branching processes (CSBPs) with logistic competition  are Markov processes that have been introduced by Lambert \cite{MR2134113} to model the size of a stochastic population in which a self-regulation dynamics is taken into account. These Markov processes are valued in $[0,\infty]$, the one-point compactification of the half-line, and can be seen as classical branching processes on which a deterministic competition pressure between pair of individuals,  parametrized by a real value $c>0$, is superimposed. For instance, if the branching dynamics are given by a critical Feller diffusion, the logistic CSBP (LCSBP) is solution to the stochastic differential equation (SDE):
\[\ddr Z_t=\sigma\sqrt{Z_t}\ddr B_t-\frac{c}{2}Z_t^{2}\ddr t,\quad Z_0=z\in (0,\infty),\]
with $(B_t,t\geq 0)$ a Brownian motion and $\sigma>0$.  In the general setting, the diffusive component above is replaced by the full dynamics of a CSBP (see e.g. Li~\cite[Chapter~9]{Li-book} and Kyprianou~\cite[Chapter~12]{Kyprianoubook}), which may exhibit positive jumps of arbitrarily large size. Those dynamics are determined by a L\'evy--Khintchine function $\Psi$ on $\mathbb{R}_+$, known as the \emph{branching mechanism}. 

Processes with competition do not satisfy any natural branching or affine properties, which makes their study challenging; however  it has been observed~\cite{MR3940763} that a logistic CSBP $Z$ lies in duality with a certain $[0,\infty]$-valued diffusion process $U$, referred to as Laplace dual of $Z$: namely for any $z\in [0,\infty]$, $x\in (0,\infty)$  and $t\geq 0$, 
\[\mathbb{E}_z[e^{-xZ_t}]=\mathbb{E}_x[e^{-zU_t}].\]
The notations $\mathbb{P}_z$ and $\mathbb{E}_z$ stand for the law of the underlying process started from $z$ and its expectation. We will not address here pathwise relationships, and we keep this notation for all processes,  which can be thought of as defined on different probability spaces. 

Duality relations are well-known in the theory of interacting particle systems, see e.g. Giardin{\`a} and Redig~\cite{dualityliebook} and Hutzenthaler and Wakolbinger~\cite{HutzenthalerWakolbinger} for a spatial setting close to ours. 

It has been established in \cite{MR3940763} that the boundary $\infty$ is accessible for certain LCSBPs. In other words some populations with  very strong reproduction  can escape from self-regulation and explode despite the quadratic competition.  Duality relationships map the \textit{entrance} laws of one process to the \textit{exit} laws of the other, Cox and R\"osler~\cite{MR724061}. Behaviors of $Z$ at its boundaries $0$ and $\infty$ are thus intrinsically related to those of the diffusion $U$ at $\infty$ and $0$ respectively.   A logistic CSBP can have actually its boundary $\infty$ as an exit (it hits $\infty$ and stays there), as a  regular reflecting boundary (the process returns to it at a set of times of zero Lebesgue measure), or as instantaneous entrance (the process leaves immediately the boundary and never visits it again).
 
The aim of this article is to push further the analysis of the process $Z$ by studying  the laws of three key quantities: the extinction time, the first explosion time and, finally, the local time at $\infty$ when $\infty$ is a regular reflecting boundary. To achieve this, we make use of a second duality relationship and introduce the Siegmund dual process of $U$: namely the process $V$ satisfying for any $x,y\in (0,\infty)$ and $t\geq 0$,
\[\mathbb{P}_x(U_t<y)=\mathbb{P}_y(x<V_t).\]
A general theorem due to Siegmund \cite{MR0431386}, recalled in Section \ref{diffusionsec}, ensures the existence of the process $V$. We summarize both dualities in the following diagram: \begin{equation}\label{dualitydiagram} Z \overset{\textbf{Laplace dual}}{\longleftrightarrow} U \overset{\textbf{Siegmund dual}}{\longleftrightarrow} V.
\end{equation} 
The process $V$, referred to as the \textit{bidual} process of $Z$, turns out to be a $[0,\infty]$-valued diffusion and plays a central role in our approach. 

One of the main contributions of this work is to demonstrate how the study of $Z$ can be effectively carried out with the aid of the bidual process. In a way, we shall see how both dualities involved serve to map \textit{entrance} laws of $Z$ to \textit{entrance} laws of $V$. Indeed, combining these two dualities one shall check the following relationship between $Z$ and $V$: 
\begin{equation}\label{joiningduals}\mathbb{E}_z(e^{-xZ_t})=\int_{0}^{\infty}ze^{-zy}\mathbb{P}_{y}(V_t>x)\ddr y, \text{ for } x,z \in (0,\infty),\ t\geq 0.
\end{equation}
When $z$ tends to $\infty$, the identity \eqref{joiningduals} yields the following link between the entrance laws of $Z$ and $V$: 
\begin{equation}\label{joiningdualsatboundary}
\mathbb{E}_\infty(e^{-xZ_t})=\mathbb{P}_0(V_t>x) \text{ for } x,t\geq 0.
\end{equation} 
Those identities will be established in the forthcoming Section \ref{section:keyidentities}. We shall see how  \eqref{joiningduals} propagates to  the laws of the times of extinction and first explosion of $Z$, Theorem \ref{theorem1} and Theorem \ref{theorem2}, and to the local time, Theorem \ref{theorem3}. More precisely, we establish that the latter has the same law as the local time of $V$ at $0$. This, in turn, enables us to compute the Hausdorff dimension of the set of explosion times and to relate the excursion measures of $Z$ and $V$, respectively at $\infty$ and $0$, through their resolvents; Theorems~\ref{Hausdorff} and~\ref{theoremdualinsiden}. We will finally determine the law, under the excursion measure, of the infimum of $Z$, see Theorem~\ref{infimuminexcursion}. 

As a tool for our analysis, we shall also study Siegmund duality of one-dimensional diffusions on $[0,\infty]$ in Section~\ref{diffusionsec}. Our approach will rely on arguments different from those in~\cite{MR724061} and will complement the results presented there.

The paper is organised as follows. In Section \ref{background}, we recall fundamental elements on one-dimensional diffusion processes and the classification of their boundaries. We then provide some background on logistic CSBPs, how they can be constructed up to hitting their boundaries and how extended processes are dual to certain generalized Feller diffusions. Our main results are stated in Section \ref{mainresults}. Section \ref{remarkwithoutcompetition} sheds some light on the case without competition. The proofs are  provided in Section \ref{proofsec} and will make use of some general results on Siegmund duality established independently in Section \ref{diffusionsec}.

\section{Preliminaries}\label{background}
\noindent\textbf{Notations.}  We denote by $C_c^2(0,\infty)$ (resp.\ $C_c^\infty(0,\infty)$) the space of twice (resp.\ infinitely) continuously differentiable functions with compact support in $(0,\infty)$. Similarly, $C^1(0,\infty)$ (resp.\ $C^2(0,\infty)$) denotes the space of (resp.\ twice) continuously differentiable functions on $(0,\infty)$. The space of bounded Borelian functions on $[0,\infty]$ is denoted by $B_b([0,\infty])$. We highlight that in all the article we take the conventions $\infty \times 0=0$ and $0 \times \infty=\infty$. 

\subsection{Terminology}\label{terminology}
Given a c\`adl\`ag strong Markov process taking values in $[0,\infty]$, we say that the boundary $0$ (respectively $\infty$) is accessible if, when the process starts from $(0,\infty)$, it hits $0$ (respectively $\infty$) with positive probability. Otherwise, we say that the boundary is inaccessible. 

When a boundary is inaccessible, it can be either  an entrance or a natural boundary. In the entrance case, although it cannot hit the boundary, the process can be started from it, that is to say, if the process is initially at the boundary, then it will leave it at some future time. In the natural case, the process neither can leave nor hit the boundary.  

When a boundary is accessible, it can be either an exit or a regular boundary. In the exit case, the process cannot leave the boundary and thus stays at it after it has reached it. In the regular case, the process can leave the boundary (in various ways) if it is not stopped upon reaching it. We shall distinguish two cases for a regular boundary. The boundary will be called regular reflecting when the time spent by the process at the boundary has a zero Lebesgue measure. When the process is stopped at a regular boundary, the boundary is said to be regular absorbing. We stress that in the article, all processes under consideration will leave instantaneously a boundary that is non-absorbing (regular reflecting or entrance boundary).

Lastly, a regular boundary is regular for itself if the process returns  immediately after having left it. For a broad class of processes, this entails the existence of a local time at the boundary and the decomposition of the paths into excursions away from the boundary along a Poisson point process. We refer the reader for instance to Blumenthal and Getoor's book \cite[Chapter V, Section 3]{zbMATH03272022} and Bertoin \cite[Chapter 4, Section 2]{Bertoin96} for two different constructions of the local time.

\subsection{One-dimensional diffusions on $[0,\infty]$}
Throughout this section, we consider two continuous functions on $[0,\infty)$,  $\sigma$ and $\mu$, that are  locally Lipschitz on $(0,\infty)$. We also assume that $\sigma$ is strictly positive on $(0,\infty)$.
\subsubsection{Stochastic differential equations and martingale problem}\label{sec:MPdiffusion}
 Consider the following SDE
\begin{equation}\label{sde0} \ddr U_t=\sigma(U_t)\ddr B_t+\mu(U_t)\ddr t, \ U_0=x\in (0,\infty),
\end{equation}
for some Brownian motion $(B_t,t\geq 0)$. Then, there exists a unique weak solution $(U_t,t\geq 0)$, defined up to the stopping time $S:=\inf\{t>0: U_t\notin(0,\infty)\}$, see for instance Revuz and Yor's book \cite[Exercice 2.10, page 383]{MR1725357}. 
We call \textit{minimal} solution the process obtained by extending $(U_t,t\geq 0)$ beyond $S$ via $U_t=U_{S}$ for all $t\geq S$. It has a version with continuous sample paths and for any $t\geq S$, $U_t=0$ if $S=\tau_0:=\inf\{t>0: U_t=0\}<\infty$ and $U_t=\infty$ if $S=\tau_\infty:=\inf\{t>0: U_t=\infty\}<\infty$.  In other words, we stop the process at its first hitting time of the boundary $0$ and $\infty$.

Moreover, a process $(U_t,t\geq 0)$ is the minimal (weak) solution of Equation \eqref{sde0} if and only if it has absorbing boundaries and it satisfies  the following martingale problem $\mathrm{(MP)}_U$: for any $f\in C^2_c(0,\infty)$, the process
\begin{equation}\label{MPU} \left(f(U_{t})-\int_{0}^{t}\mathscr{A}f(U_{s})\ddr s, t\geq 0\right)\text{ is a martingale,}
\end{equation}
where $\mathscr{A}$ is called the generator and takes the form
\[\mathscr{A}f(x)=\frac{1}{2}\sigma^2(x)f''(x)+\mu(x)f'(x),\ f\in C^2(0,\infty), x\in (0,\infty).\]
We refer e.g. to  Durrett's book \cite[Section 6.1]{MR1398879} for a study of $\mathrm{(MP)}_U$. 

The minimal solution does not take into account the behaviors at the boundaries, in the sense that as soon as one boundary is accessible, it is absorbing for the minimal diffusion. Other solutions  to the martingale problem $\mathrm{(MP)}_U$ besides the minimal one may exist, and all the different behaviors described in Section \ref{terminology} can possibly happen at a non-natural boundary for some functions $\sigma$, $\mu$.

In order to classify them, we need the scale function and speed measure.  

\subsubsection{Scale function and speed measure}\label{backgroundspeedscale}
Let $x_0$, $y_0$ be arbitrary fixed  points in $(0,\infty)$. Set $s_U(x):=\exp\left(-\int_{x_0}^x\frac{2\mu(u)}{\sigma^2(u)}\ddr u\right)$ for $x\in (0,\infty)$ and 
\begin{equation}\label{scalefunction}
S_U(y):=\int_{y_0}^{y}s_U(x)\ddr x=\int_{y_0}^{y}\exp\left(-\int_{x_0}^x\frac{2\mu(u)}{\sigma^2(u)}\ddr u\right)\ddr x, \ y\in (0,\infty).
\end{equation}
We call $S_U$ the scale function\footnote{they are defined up to affine transformations.}  and shall also denote by $S_U$ the associated Stieltjes measure: $S_U(\ddr x)=s_U(x)\ddr x$. Let $m_U(x):=\frac{1}{\sigma^2(x)s_U(x)}$ for any $x\in (0,\infty)$ and \begin{equation}\label{speedmeasure}
M_U(y):=\int_{y_0}^{y}m_U(x)\ddr x=\int_{y_0}^{y}\frac{1}{\sigma^2(x)}\exp\left(\int_{x_0}^x\frac{2\mu(u)}{\sigma^2(u)}\ddr u\right)\ddr x, \ y\in (0,\infty).
\end{equation} 
We denote also  by $M_U$ the associated Stieltjes measure, this is the so-called speed measure, $m_U$ being the speed density measure. An important fact is that the one-dimensional law of the diffusion $U$ admits a density with respect to the speed measure $M_U$, Rogers and Williams \cite[Theorem 50.11, Chapter V]{zbMATH01515832}. In our case the latter will always be absolutely continuous and in particular for any $t>0$, the law of $U_t$ has no atom in $(0,\infty)$.  

\subsubsection{Feller's conditions}\label{secFellercond}
The classification of boundaries of one-dimensional diffusions is completely understood. We refer for instance to Karlin and Taylor's book \cite[Chapter 15, Section 6]{zbMATH03736679}. We briefly recall the integral tests that are specifying the behavior of a diffusion at its boundaries. 

For any $l\in [0,\infty]$, define the integral tests $I_U$ and $J_U$ by
\begin{equation}\label{IJ} I_U(l):=\int_{l}^xS_U(l,x]\ddr M_U(x) \text{ and } J_U(l):=\int_{l}^xS_U[u,x]\ddr M_U(u).
\end{equation}
The point $l\in [0,\infty]$ is accessible if and only if $I_U(l)<\infty$. The integral test $J_U(l)$ specifies whether the process can leave the boundary $l$ or not.  The following analytical classification of boundaries can be found for instance in \cite[Table 6.2, page 234]{zbMATH03736679}. 

\begin{table}[htpb]
\begin{tabular}{|c|c|}
\hline
Feller's conditions & Boundary of $U$ \\
\hline
$S_U(0,x]<\infty \text{ and } M_U(0,x]<\infty$ &  $0$ regular \\
\hline
$S_U(0,x]=\infty \text{ and } J_U(0)<\infty$ &  $0$ entrance \\
\hline
$M_U(0,x]=\infty \text{ and } I_U(0)<\infty$ &  $0$ exit \\
\hline
$I_U(0)=\infty \text{ and } J_U(0)=\infty$ &  $0$ natural \\
\hline
\end{tabular}
\vspace*{4mm}
\caption{Boundaries of $U$.}
\label{Fellerconditions}
\vspace*{-5mm}
\end{table}
\noindent By replacing everywhere $0$ by $\infty$ in Table \ref{Fellerconditions}, we get the classification for the boundary $\infty$. 

\medskip

In the regular case, an extra information on the behavior at the boundary is needed to completely understand the process, see e.g. Borodin and Salminen \cite{MR1912205} for the complete classification. We will only consider the two extreme possibilities namely reflection and absorption (sticky behavior interpolates between the two and is not considered here).

\medskip

When a boundary is natural or is absorbing (i.e. exit or regular absorbing), we say that the boundary is attracting if the process has a positive probability to converge towards it. We have the following classification \cite[Proposition 5.22, page 345]{karatzas}:

\begin{table}[htpb]
\begin{tabular}{|c|c|}
\hline
Conditions & Boundary of $U$ \\
\hline
$S_U(0,x]<\infty$ &  $0$ attracting \\
\hline
$S_U[x,\infty)<\infty$ &  $\infty$ attracting \\
\hline
\end{tabular}
\vspace*{4mm}
\caption{Attracting boundaries of $U$.}
\label{Attractingconditions}
\end{table}

\noindent Moreover for all $x\in (0,\infty)$,
\[\mathbb{P}_x(U_t\underset{t\rightarrow \infty}{\longrightarrow} 0)=1-\mathbb{P}_x(U_t\underset{t\rightarrow \infty}{\longrightarrow} \infty)=\frac{S_U[x,\infty)}{S_U(0,\infty)}.\]  
In particular, when both boundaries are attracting, the process will converge towards one of them almost surely.
\subsubsection{Feller's construction of diffusions with $0$ regular reflecting boundary}\label{sec:fellerconstruction}
The diffusion  solution to \eqref{sde0} with, say, the boundary $0$ regular -- in the sense that it is accessible non-absorbing --  can be constructed from a time-changed reflected Brownian motion. We explain this briefly here and refer  to Karatzas and Shreve \cite[Section 5.5-B, pages 339-340]{karatzas} and  Durrett \cite[Section~6.5]{MR1398879} for details on the following construction. 
 
Let $U^{\mathrm{a}}$ be the diffusion of $\eqref{sde0}$ absorbed at $0$ with given coefficients $\sigma$ and $\mu$. Call the associated scale function $S$ and the speed measure $M$. Assume that the boundary $0$ is regular in the sense of Table \ref{Fellerconditions}, i.e. $S(0,x]<\infty$ and $M(0,x]<\infty$ for some $x>0$.

We construct a process that solves  $(\mathrm{MP})_{U}$ -- and  constitutes a weak solution to \eqref{sde0} -- with its boundary $0$ being regular non-absorbing. First we transfer the problem in natural scale, namely we ``remove" the drift, see e.g. \cite[Section 6.5, page 229]{MR1398879}, with the help of the scale function. Choose the scale function $S$ such that $S(0)=0$. The diffusion $(S(U_t^{\mathrm{a}}),t\geq 0)$ is in natural scale, i.e. its scale function is the identity, and has speed density measure $1/h$, defined by 
\begin{equation}\label{densityspeedmeasure}
h(y):=S'(S^{-1}(y))^2 S^{-1}(y) \text{ for } y \in [0,\infty),
\end{equation}
see \cite[Equation (1.5), Section 6.1, page 212]{MR1398879}. Then, extend $h$ on $\mathbb{R}$ by $h(-y)=h(y)$ for all $y$, let $(X_t,t\geq 0)$ be the $\mathbb{R}$-valued diffusion in natural scale with speed density measure $m_X(y):=1/h(|y|)$ for all $y\in \mathbb{R}$, and finally define \begin{equation}\label{extended-diff}
U_t:=S^{-1}(|X_t|) \text{ for all } t\geq 0. 
\end{equation}
In order to check that $U$ indeed has $0$ non-absorbing it suffices to verify that for some $x>0$, the point $-x$ is accessible for $X$ started from $0$. A simple calculation yields
\[S_X(-x,0]=x<\infty \text{ and }M_X(-x,0]=\int_{0}^{x}m_X(y)\ddr y=M(0,{S^{-1}}(x)]<\infty.\]
This ensures that $I_X(-x)<\infty$, see \eqref{IJ}, $-x$ is thus accessible (it is actually regular), hence the process $U$ leaves its boundary~$0$. The fact that $0$ is regular for itself and reflecting for $U$ can  also be checked (for the latter notice that $M_X$ has no atom). 
\subsection{Martingale problem of LCSBPs and construction of the minimal LCSBP}\label{lamperticonstruction}
 
\subsubsection{Generator of LCSBPs}
Let $\Psi$ be a branching mechanism, namely a function of the Lévy-Khintchine form : 

\begin{equation}
\label{LK}\Psi(x)= -\lambda+\frac{\sigma^2}{2}x^2+\gamma x+\int_{0}^{\infty}\left(e^{-xh}-1+xh\mathbbm{1}_{\{h\leq 1\}}\right)\pi(\ddr h)\quad
\text{ for all }x\geq 0,
\end{equation}
where $\lambda\geq 0,\sigma\geq 0, \gamma \in \mathbb{R}$ and $\pi$ is a Lévy measure on $(0,\infty)$ such that $\int_0^{\infty} (1\wedge x^2)\pi(\ddr x)<\infty$. 

Denote by $\mathscr{L}^\Psi$ the extended\footnote{in the sense that it produces local martingales.} generator of the CSBP($\Psi$) and let $\mathcal{D}$ be the space of functions $$\mathcal{D}:=\{f\in C^2 (0,\infty): \text{ the limit } f(\infty):=\underset{z\rightarrow \infty}{\lim} f(z) \text{ exists in }\mathbb{R}\}.$$ For any $f\in \mathcal{D}$, $z\in (0,\infty)$
\begin{equation}\label{genCSBP}\mathscr{L}^\Psi f(z):=\frac{\sigma^2}{2} zf''(z) +\gamma zf'(z)+\lambda z(f(\infty)-f(z))+z\int_{0}^{\infty}\left(f(z+h)-f(z)-hf'(z)\mathbbm{1}_{\{h\leq 1\}}\right)\pi(\ddr h),
\end{equation}
see e.g. Silverstein \cite[Page 1045]{MR0226734}. Notice the jump term from $z$ to $\infty$ at rate $\lambda z$ and observe that when $f$ is vanishing at $\infty$, $\lambda z(f(\infty)-f(z))=-\lambda zf(z)$ for all $z\in (0,\infty)$.

The function $\Psi$ governs the reproduction in the population. In order to take into account the competition term, the generator $\mathscr{L}$ of the LCSBP$(\Psi,c)$ is defined as follows: for any $f\in \mathcal{D}$ 
and $z\in (0,\infty)$, 
\begin{equation}\label{genLCSBP}\mathscr{L}f(z):=\mathscr{L}^\Psi f(z)-\frac{c}{2}z^2f'(z).\end{equation}
We define the LCSBPs with parameter $(\Psi,c)$ as Markov processes solutions to the following martingale problem $\mathrm{(MP)}_Z$: For any $f\in C^2_c(0,\infty)$,  the process
\begin{equation}\label{MPZ} \left(f(Z_t)-\int_{0}^{t}\mathscr{L}f(Z_s)\ddr s, t\geq 0\right)\text{ is a martingale.}
\end{equation}
\noindent 
There exists a unique solution of $\text{(MP)}_Z$ stopped when reaching the boundaries $0$ and $\infty$, see \cite[Section 4]{MR3940763}. We shall refer to it as the \textit{minimal} LCSBP$(\Psi,c)$, since the process does not evolve anymore after it has reached the boundaries. We explain briefly a construction below. 
\subsubsection{Minimal LCSBP}\label{sec:Zmindef}
Following Lambert's idea \cite[Definition 3.2]{MR2134113}, a simple construction of the process absorbed when reaching its boundaries, is provided by time-changing in Lamperti's manner a generalized Ornstein-Uhlenbeck process (GOU)  $(R_t,t\geq 0)$ stopped when reaching $0$. This latter process is solution to the stochastic equation
\begin{equation} \label{OU} \ddr R_t=\ddr X_t-\frac{c}{2}R_t\ddr t,\ R_0=z,\quad  \text{for all }t\leq \sigma_0, 
\end{equation}
where $(X_t, t\geq 0)$ is a spectrally positive L\'evy process with Laplace exponent $\Psi$ -- if $\lambda>0$, it jumps to $\infty$ at an independent exponential time $\mathbbm{e}_\lambda$ -- and where  $\sigma_0$ denotes the first passage time below $0$ of $R$ . 
Define the additive functional $\theta$ and its right-inverse $C$ by \begin{equation}\label{timechangedef} t\mapsto \theta_t:=\int_{0}^{t\wedge \sigma_0}\frac{\ddr s}{R_s}\in [0,\infty] \text{ and } \ t\mapsto C_t:=\inf\{u\geq 0:\ \theta_u>t\}\in [0,\infty],
\end{equation}
with the usual convention $\inf\{\emptyset\}=\infty$. The Lamperti time-change of the stopped process $(R_t,t\geq 0)$ is the process $(\zmin_t,t\geq 0)$ defined by 
\begin{align}\label{zmin}
\zmin_t&=
\begin{cases}
R_{C_t} &  0\leq t<\theta_\infty,\\
0& t\geq \theta_\infty \text{ and }\sigma_0<\infty,\\
\infty& t\geq \theta_\infty   \text{ and } \sigma_0=\infty.
\end{cases}
\end{align}
This process is a c\`adl\`ag solution to $\text{(MP)}_Z$, see \cite[Lemma 4.1]{MR3940763}, and is absorbed whenever it reaches $0$ or $\infty$. The process $\zmin$ is not always the only solution of $\mathrm{(MP)}_Z$. We will describe in the next section solutions with the boundary $\infty$ non-absorbing. \\

We will need a framework slightly more general than martingales associated with compactly supported functions.
\begin{lemma}
Let $f\in \mathcal{D}$, the process
\begin{equation}\label{Mzmin}
(M^{\zmin}_t)_{0\leq t<\zeta_0\wedge \zeta_\infty}:=\left(f(Z^{\min}_{t})-\int_0^{t}\mathscr{L}f(Z^{\min}_s)\ddr s\right)_{ 0\leq t<\zeta_0\wedge \zeta_\infty}
\end{equation} is a local martingale. 
\end{lemma}
\begin{proof}
This  follows from the construction of $Z^{\min}$ in \eqref{zmin}. Indeed, from the stochastic equation~\eqref{OU}, we see by an application of Itô's lemma, see e.g. \cite[Theorem 4.57]{JacodShiryaev}, that for any bounded $f\in C^2(0,\infty)$, the process \[(M^{R}_t,t\geq 0):=\left(f(R_{t})-\int_0^{t}\mathscr{L}^Rf(R_s)\ddr s\right)_{0\leq t<\sigma_0\wedge \mathbbm{e}_\lambda}\] is a local martingale, with  $$\mathscr{L}^Rf(z):=\mathrm{L}^{\Psi}f(z)-\frac{c}{2}zf'(z), \ z\in (0,\infty)$$ where $\mathrm{L}^{\Psi}$ denotes the generator of $X$.  Observing then \eqref{genCSBP} and \eqref{genLCSBP}, it follows that  for any $z\in (0,\infty)$,
$$\mathscr{L}f(z)=z\mathscr{L}^Rf(z).$$ By the definition of $(C_t,t>0)$, see \eqref{timechangedef}, $\int_0^{C_t}\frac{\ddr s}{R_{s\wedge \sigma_0}}=t$ for all $t\geq 0$. Consequently, 
\begin{center}
$C_t=\int_0^tZ_s^{\mathrm{min}}\ddr s$ and $\ddr C_t=\zmin_t\ddr t$ for all $t\geq 0$ a.s..
\end{center} Since the map $t\mapsto C_t$ is continuous, $(M^R_{C_t},t\in [0,\theta_\infty))$ is also a local martingale, see  for instance \cite[Proposition 1.5, Chapter V]{MR1725357} and Vidmar \cite[Item (ii), page 1663]{zbMATH07639718}. Moreover by construction $\theta_\infty=\zeta_0\wedge\zeta_\infty$ a.s. and for any $t\geq 0$, we get by the change of variable $u=C_s$,
\begin{align*}
M^R_{C_t}&=f(R_{C_t})-\int_0^{C_t}\mathscr{L}^{R}f(R_u)\ddr u\\
&=f(R_{C_t})-\int_0^{t}\mathscr{L}^{R}f(R_{C_s})\ddr C_s\\
&=f(Z^{\min}_{t})-\int_0^{t}\mathscr{L}f(Z^{\min}_s)\ddr s=M^{\zmin}_t.
\end{align*} 
This entails that $(M^{\zmin}_t, 0\leq t< \zeta_0\wedge \zeta_\infty)$ is a local martingale. 
\end{proof}

\subsection{Boundary behaviors of CSBPs and LCSBPs}\label{backgroundboundary}

\subsubsection{CSBPs}\label{sec:csbp} 
When there is no competition, i.e. $c=0$, the construction in \eqref{zmin} above is known as the Lamperti's transformation for CSBPs. The process $(Z_t^{\min},t\geq 0)$ is in this case a CSBP$(\Psi)$, see e.g. \cite[Theorem 12.2]{Kyprianoubook}. Call it $(Y_t,t\geq 0)$. It is known that the semigroup of $(Y_t,t\geq 0)$ satisfies the identity
\begin{equation}\label{cumulant}
\mathbb{E}_z[e^{-xY_t}]=e^{-zu_t(x)},
\end{equation}
with $(u_t(x),t\geq 0)$ the unique solution to \begin{equation}\label{cumulantu}\frac{\ddr}{\ddr t} u_t(x)=-\Psi(u_t(x)) \text{ with } u_0(x)=x \in (0,\infty).\end{equation}
The map $(u_t(x),t\geq 0)$ cannot hit the boundaries $0$ and $\infty$, see e.g. Silverstein \cite[Pages 1046-1047]{MR0226734}, and therefore the boundaries $\infty$ and $0$ of $(Y_t,t\geq 0)$ are absorbing. We also plainly see from \eqref{cumulant} that if $0$ (respectively $\infty$) is an entrance for $(u_t,t\geq 0)$, i.e. 
\begin{center}
$u_t(0):=\underset{x\rightarrow 0 \atop x>0}{\lim} \downarrow u_t(x)>0$ for $t>0$, (respectively $u_t(\infty):=\underset{x\rightarrow \infty}{\lim} \uparrow u_t(x)<\infty$ for $t>0$),
\end{center} then the CSBP $Y$ will reach $\infty$ (respectively $0$) with positive probability. The conditions for 
$u_t(0)>0$ and $u_t(\infty)<\infty$ are respectively the integral tests \begin{equation}\label{GreyDynkincond}\int_0\frac{\ddr x}{-\Psi(x)}<\infty \text{ (Dynkin's condition)}\text{ and }\int^{\infty}\frac{\ddr x}{\Psi(x)}<\infty \text{ (Grey's condition)},\end{equation}
see e.g. \cite[Theorems 12.3 and 12.5]{Kyprianoubook}. Note that when these integrals are finite, the integrand is always positive near the boundary.
\subsubsection{LCSBPs} When there is competition, i.e. $c>0$, the boundary behaviors are richer. We briefly recall here the results of \cite[Section 3]{MR3940763}. A striking difference between CSBPs and LCSBPs is that, whereas CSBPs cannot restart from the boundary \(\infty\), in most cases where this boundary is accessible the LCSBP can restart continuously from it. 

More rigorously, we call \emph{extension} of $\zmin$, a Markov process $Z$ such that, once stopped at its first explosion time 
\[\zeta_\infty:=\inf\{t>0: Z_{t-} \text{ or } Z_t=\infty\},\] it has the same law as $(Z^{\mathrm{min}}_t,t\geq 0)$. As noticed in \cite{MR3940763}, c\`adl\`ag extensions of the minimal process may exist with different boundary conditions at $\infty$. 
\\ 

The starting point of the study in \cite{MR3940763} is the following identity for the generator $\mathscr{L}$.

\begin{lemma}[Lemma 5.1 in \cite{MR3940763}]\label{lemmadualityLA} Define $e_x(z):=e^{-xz}=:e_z(x)$ for any $x,z\in (0,\infty)$. One has
\begin{equation}\label{dualityLA}
\mathscr{L}e_x(z)=\mathscr{A}e_z(x), \quad x,z\in (0,\infty), \end{equation}
with $\mathscr{A}$ the operator defined on $C^2(0,\infty)$ as follows: 
\begin{equation}\label{generatorU0}\mathscr{A}g(x):=\frac{c}{2}xg''(x)-\Psi(x)g'(x), \ x\in (0,\infty).
\end{equation}
\end{lemma}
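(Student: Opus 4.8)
The plan is to prove \eqref{dualityLA} by a direct substitution of the exponential kernel into the two generators, after which the identity splits transparently into two separate dualities. Fix $x,z\in(0,\infty)$ and set $f(z)=e_x(z)=e^{-xz}$, regarded as a function of $z$ with $x$ frozen, and $g(x)=e_z(x)=e^{-xz}$, regarded as a function of $x$ with $z$ frozen. I would record the elementary relations $f'(z)=-x f(z)$, $f''(z)=x^2 f(z)$, $f(z+h)=e^{-xh}f(z)$ and $f(\infty)=0$ (the last because $x>0$, using the convention $0\times\infty=0$), together with their symmetric counterparts $g'(x)=-z g(x)$ and $g''(x)=z^2 g(x)$.

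First I would evaluate $\mathscr{L}^\Psi e_x(z)$ by inserting these relations into \eqref{genCSBP}. The diffusion, drift and killing terms each produce a multiple of $z e^{-xz}$, and the jump term gives $z e^{-xz}\int_0^\infty\big(e^{-xh}-1+xh\mathbbm{1}_{\{h\leq 1\}}\big)\pi(\ddr h)$ after factoring $e^{-xz}$ out of the integral; assembling the four contributions reproduces term by term the Lévy-Khintchine expression \eqref{LK}, so that
\[\mathscr{L}^\Psi e_x(z)=z\Psi(x)e^{-xz}.\]
This is the classical Laplace identity for CSBPs, and as a cross-check it also follows by differentiating the cumulant semigroup \eqref{cumulant}--\eqref{cumulantu} at $t=0$. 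Adding the competition term from \eqref{genLCSBP} and using $f'(z)=-x f(z)$ then gives
\[\mathscr{L}e_x(z)=z\Psi(x)e^{-xz}-\frac{c}{2}z^2 f'(z)=z\Psi(x)e^{-xz}+\frac{c}{2}x z^2 e^{-xz}.\]

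On the dual side I would substitute $g'(x)=-z g(x)$ and $g''(x)=z^2 g(x)$ into \eqref{generatorU0}, obtaining
\[\mathscr{A}e_z(x)=\frac{c}{2}x z^2 e^{-xz}+z\Psi(x)e^{-xz},\]
which coincides with the previous display and establishes \eqref{dualityLA}. The conceptual point I would stress is that under the kernel $e^{-xz}$ the identity decouples into two independent dualities: the branching operator $\mathscr{L}^\Psi$ acting in $z$ is dual to the first-order drift $-\Psi(x)\partial_x$ acting in $x$, while the quadratic competition operator $-\frac{c}{2}z^2\partial_z$ is dual to the diffusion operator $\frac{c}{2}x\partial_x^2$; the second duality is exactly what makes the Laplace dual $U$ a genuine diffusion. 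The computation is otherwise routine, and the only steps needing care are the interchange of the $\pi$-integral with the factor $e^{-xz}$---legitimate because $e^{-xh}-1+xh\mathbbm{1}_{\{h\leq 1\}}$ is $O(h^2)$ near $0$ and $O(1)$ at infinity, hence $\pi$-integrable under $\int_0^\infty(1\wedge h^2)\pi(\ddr h)<\infty$---and the use of $f(\infty)=0$ in the killing term, valid since we evaluate at $x>0$.
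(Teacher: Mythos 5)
Your proof is correct and is essentially the canonical argument for this lemma: the paper itself does not reprove it but quotes it from \cite{MR3940763} (Lemma 5.1 there), whose proof is the same direct substitution of the kernel $e^{-xz}$ into both operators, yielding $\mathscr{L}e_x(z)=z\Psi(x)e^{-xz}+\frac{c}{2}xz^2e^{-xz}=\mathscr{A}e_z(x)$, and your justification of the $\pi$-integrability and of $e_x(\infty)=0$ is exactly the care needed. One small point worth flagging: \eqref{genCSBP} as printed has diffusion term $\sigma z f''(z)$ and drift $+\gamma z f'(z)$, which taken literally would produce $z\bigl(\sigma x^2-\gamma x-\lambda+\int_0^\infty(e^{-xh}-1+xh\mathbbm{1}_{\{h\leq 1\}})\pi(\ddr h)\bigr)e^{-xz}$ rather than $z\Psi(x)e^{-xz}$ with $\Psi$ from \eqref{LK}; your ``term by term'' reassembly tacitly uses the standard generator $\frac{\sigma^2}{2}zf''(z)-\gamma zf'(z)+\cdots$ (the displayed form contains sign/coefficient typos), and your cross-check via differentiating \eqref{cumulant}--\eqref{cumulantu} at $t=0$ confirms that your version is the intended one.
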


The duality relationship \eqref{dualityLA} lies at the level of generators and actually covers different possibilities for the associated processes depending on the nature of the boundary $\infty$ of the LCSBP and on the boundary $0$ of the diffusion with generator $\mathscr{A}$. The latter is prescribed by the following integral. Let $x_0>0$ be an arbitrary constant and set \[\mathcal{E}:=\int_0^{x_0} \frac{\ddr x}{x}\exp{\left(\frac{2}{c}\int_x^{x_0}\frac{\Psi(u)}{u}\ddr u\right)}.\]

We sum up in the next theorem, the results obtained  in \cite{MR3940763} on the explosion of LCSBPs, the extensions of the minimal process, as well as their behaviors near the boundary $0$ (extinction) when the boundary $\infty$ is non-absorbing.
\begin{theorem}[Theorems 3.1, 3.3, 3.4 and 3.9 in \cite{MR3940763}] \label{backgroundtheorem}\
\bigskip
\begin{itemize}
\item[i)] \underline{Explosion}: The boundary $\infty$ is accessible for $\zmin$ if and only if $\mathcal{E}<\infty$. \\
\item[ii)] \underline{Feller extensions}: There exists a c\`adl\`ag Feller\footnote{Here Feller means that the semigroup maps continuous bounded functions on $[0,\infty]$ into themselves} process $(Z_t,t\geq 0)$ on $[0,\infty]$ with no negative jumps, extending the minimal process $\zmin$, such that for all $x,z\in [0,\infty]$,  $t\geq 0$
\begin{equation}\label{duality1}
\mathbb{E}_z[e^{-xZ_t}]=\mathbb{E}_x[e^{-zU_t}] 
\end{equation}
where $(U_t,t\geq 0)$ is the weak solution to the SDE 
\begin{equation}\label{sdeU}
\ddr U_t=\sqrt{cU_t}\ddr B_t-\Psi(U_t)\ddr t,\ U_0=x
\end{equation}
with $(B_t,t\geq 0)$ a Brownian motion and with boundary conditions at $0$ given in correspondence with that of $Z$ at $\infty$ as in Table \ref{correspondanceUZ}.
\begin{table}[htpb]
\begin{tabular}{|c|c|c|}
\hline
Integral condition & Boundary of $U$ &  Boundary  of $Z$ \\
\hline
$\mathcal{E}=\infty$ & $0$ exit  &  $\infty$  entrance  \\
\hline
$\mathcal{E}<\infty \text{ and } 2\lambda/c<1$ & $0$ regular absorbing &  $\infty$  regular reflecting\\
\hline
$2\lambda/c\geq 1$ & $0$ entrance &  $\infty$  exit\\
\hline
\end{tabular}
\vspace*{4mm}
\caption{Boundaries $\infty$ and $0$ of $Z,U$.}
\label{correspondanceUZ}
\end{table}
\end{itemize}

\begin{itemize}
\item[iii)] \underline{Extinction}: If $2\lambda/c<1$ (i.e.  $Z$ has the boundary $\infty$ either entrance or regular reflecting), then 
\begin{itemize}
\item $Z$ converges towards $0$ a.s. if and only if $\Psi(z)\geq 0$ for some $z>0$. 
\item $Z$ gets absorbed at $0$ a.s. if and only if $\Psi(z)\geq 0$ for some $z>0$ and $\int^{\infty}\frac{\ddr x}{\Psi(x)}<\infty$.
\end{itemize}

\begin{table}[htpb]
\begin{tabular}{|c|c|c|}
\hline
Integral condition & Boundary of $U$ &  Boundary  of $Z$ \\
\hline
$\int^{\infty}\frac{\ddr x}{\Psi(x)}=\infty$ & $\infty$ natural  &  $0$  natural\\
\hline
$\int^{\infty}\frac{\ddr x}{\Psi(x)}<\infty$ & $\infty$ entrance &  $0$  exit\\
\hline
\end{tabular}
\vspace*{4mm}
\caption{Boundaries $\infty$ and $0$ of $U,Z$.}
\label{correspondanceUZextinc}
\end{table}
\end{itemize}
\vspace*{-4mm}
\end{theorem} 
\noindent The integral conditions for the classification of the boundaries $0$ and $\infty$ of  $U$ displayed in Tables~ \ref{correspondanceUZ} and \ref{correspondanceUZextinc} can be found in \cite[Lemma 5.2]{MR3940763}. It is shown there that Feller's conditions, see Table~ \ref{Fellerconditions}, can indeed be simplified this way. Notice that $\mathcal{E}=\frac{c}{2}M_U(0,x_0]$, where $M_U$ is the speed measure of $U$, see \eqref{speedmeasure} for the general formula. Moreover, Table~\ref{correspondanceUZ} shows that the boundary $0$ of $U$ is regular -- the process $U$ can access and leave $0$ if it is not stopped -- if and only if $\mathcal{E}<\infty$ and $2\lambda/c<1$. Note also that there are no situations in which \(\infty\) (resp. $0$ ) is natural for the LCSBP (resp. for its Laplace dual).
\medskip

The process $Z$ reflected at its boundary $\infty$ was constructed in \cite[Section~7]{MR3940763} as a limit of LCSBPs whose boundaries $\infty$ are all of entrance type. The duality relationship \eqref{duality1} yields actually the \textit{probability} entrance law of the process $Z$ started from $\infty$ and the fact that $\infty$ is regular reflecting when $\mathcal{E}<\infty$ and $\frac{2\lambda}{c}<1$. Indeed since in this case $0$ is regular absorbing for $U$, by letting $z$ go to $\infty$ for fixed $x$, and $x$ go to $0$ for fixed $z$ in \eqref{duality1}, we see that
\[\mathbb{E}_{\infty}[e^{-xZ_t}]=\mathbb{P}_x(U_t=0)>0 \text{ and } \mathbb{P}_z(Z_t<\infty)=\mathbb{E}_{0+}[e^{-zU_t}]=1, \ z\in [0,\infty], t\geq 0.\]

What happens to the process $Z$ past explosion is therefore entirely determined by the law of the first hitting time of $0$ by $U$. A final result from~\cite{MR3940763} that we need to recall is that when the boundary $\infty$ of $Z$ is regular reflecting, i.e.\ when $\mathcal{E}<\infty$ and $2\lambda/c<1$, it is also regular for itself~\cite[Proposition~7.9]{MR3940763}. This implies that the process possesses a non-degenerate local time at $\infty$. However, the construction in~\cite{MR3940763} provides no information either on this local time or on the excursions away from $\infty$. We shall see that the bidual process $V$ (see~\eqref{dualitydiagram}) is particularly useful for a deeper analysis of LCSBPs in this direction. Finally, note that no duality relationship for the minimal LCSBP $(Z_t^{\min}, t \geq 0)$ was established in~\cite{MR3940763} when $\mathcal{E}<\infty$; establishing such a relationship will be one of our main results (Theorem \ref{theorem4}).

\section{Main results}\label{mainresults}
Let $(U_t,t\geq 0)$ be the diffusion solution to \eqref{sdeU} with boundary $0$ either exit, regular absorbing or entrance according to the behavior at  $\infty$ of $Z$.  As explained in the introduction, we will use the following second duality relationship between $U$ and its so-called Siegmund dual process $V$ satisfying:  for any $x,y\in (0,\infty)$  and $t\geq 0$,
\begin{equation}\label{duality2}\mathbb{P}_x(U_t< y)=\mathbb{P}_y(x< V_t),
\end{equation}
We first state a proposition identifying the process $V$ and specify the correspondences between boundaries of the three processes $U$, $V$ and $Z$. This is a direct application of a general statement for diffusions, established in Section \ref{diffusionsec}, see Theorem \ref{CoxRoesler}.
\begin{proposition}\label{propUVZ}
The Siegmund dual of $(U_t,t\geq 0)$ is the diffusion $(V_t,t\geq 0)$ weak solution to the SDE
\begin{equation} \label{SDEV}
\ddr V_t=\sqrt{cV_t}\ddr B_t+\big(c/2+\Psi(V_t)\big)\ddr t,\ V_0=y \in (0,\infty),
\end{equation}
where $(B_t,t\geq 0)$ is a Brownian motion \footnote{We stress that the processes $U$ and $V$ are meant as weak solutions. The driving Brownian motions, all denoted by $B$, are not supposed to be the same in the stochastic equations \eqref{sdeU} and \eqref{SDEV}.} and whose boundary condition at $0$ and $\infty$ are given in correspondence with that of $U$ in the following way: 
\begin{table}[htpb]
\begin{tabular}{|c|c|c|}
\hline
Integral condition & Boundary of $U$ & Boundary of $V$\\
\hline
$\mathcal{E}=\infty$ & $0$ exit & $0$ entrance \\
\hline
$\mathcal{E}<\infty \text{ and } 2\lambda/c<1$ & $0$ regular absorbing & $0$ regular reflecting\\
\hline
$2\lambda/c\geq 1$ & $0$ entrance & $0$ exit \\
\hline
$\int^{\infty}\frac{\ddr x}{\Psi(x)}=\infty$ & $\infty$ natural & $\infty$ natural \\
\hline
$\int^{\infty}\frac{\ddr x}{\Psi(x)}<\infty$ & $\infty$ entrance & $\infty$ exit\\
\hline
\end{tabular}
\vspace*{4mm}
\caption{Boundaries of $U,V$.}
\label{correspondanceUV}
\end{table}
\vspace*{-6mm}
\end{proposition}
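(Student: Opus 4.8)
The plan is to derive Proposition \ref{propUVZ} as a direct specialisation of the general Siegmund duality theorem for regular one-dimensional diffusions, Theorem \ref{CoxRoesler}. That theorem identifies the Siegmund dual of a diffusion by interchanging the roles of its scale function and speed measure: if $U$ has scale function $S_U$ and speed measure $M_U$, the dual $V$ is the regular diffusion with scale function $S_V=M_U$ and speed measure $M_V=S_U$. Granting this, the proof reduces to two computations: reading off the SDE \eqref{SDEV} from the swapped scale and speed, and transferring the boundary classification of Tables \ref{Fellerconditions}--\ref{Attractingconditions} through the same swap.

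For the SDE, I would start from the coefficients $\sigma^2(x)=cx$, $\mu(x)=-\Psi(x)$ of \eqref{sdeU} and record the scale and speed densities $s_U(x)=\exp\big(\tfrac{2}{c}\int_{x_0}^x\tfrac{\Psi(u)}{u}\ddr u\big)$ and $m_U(x)=\tfrac{1}{cx}\exp\big(-\tfrac{2}{c}\int_{x_0}^x\tfrac{\Psi(u)}{u}\ddr u\big)$. The key observation is that the two exponential factors cancel in the product, $s_U(x)m_U(x)=\tfrac{1}{cx}$. After the swap $s_V=m_U$, $m_V=s_U$, the diffusion coefficient of $V$ is recovered from $\sigma_V^2(y)=\big(m_V(y)s_V(y)\big)^{-1}=\big(s_U(y)m_U(y)\big)^{-1}=cy$, and the drift from $\tfrac{2\mu_V(y)}{\sigma_V^2(y)}=-\tfrac{\ddr}{\ddr y}\log s_V(y)=-\tfrac{\ddr}{\ddr y}\log m_U(y)=\tfrac{1}{y}+\tfrac{2\Psi(y)}{cy}$, whence $\mu_V(y)=\tfrac{c}{2}+\Psi(y)$. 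This is precisely \eqref{SDEV}, and in particular $V$ stays in the same family of generalised Feller diffusions as $U$.

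The boundary table then follows from the swap together with Fubini's theorem. Because $S_V=M_U$ and $M_V=S_U$ as Stieltjes measures, a plain interchange of the order of integration in the tests \eqref{IJ} gives, at each boundary $l\in\{0,\infty\}$, the identities $I_V(l)=J_U(l)$ and $J_V(l)=I_U(l)$, while trivially $S_V(l,\cdot]=M_U(l,\cdot]$ and $M_V(l,\cdot]=S_U(l,\cdot]$. Feeding these into Feller's conditions of Table \ref{Fellerconditions} transfers each row mechanically: an exit boundary of $U$ at $0$ (with $M_U(0,x]=\infty$, $I_U(0)<\infty$) turns into $S_V(0,x]=\infty$, $J_V(0)<\infty$, hence an entrance for $V$; an entrance of $U$ ($S_U(0,x]=\infty$, $J_U(0)<\infty$) turns into an exit for $V$; a natural boundary is preserved since both tests remain infinite; and a regular boundary of $U$ (both $S_U$ and $M_U$ finite) yields a regular boundary of $V$. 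The identical substitutions at $\infty$ produce the last two rows of Table \ref{correspondanceUV}.

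I expect the only delicate point to be the regular row, where $U$ is declared regular absorbing at $0$ while its dual $V$ is regular reflecting. The scale/speed swap alone only tells us that $V$ is a regular diffusion at $0$; which of the admissible behaviours it inherits is fixed by the duality relation \eqref{duality2} itself and is the substantive content of Theorem \ref{CoxRoesler} in the regular case. The crux of the argument is therefore to invoke that theorem's prescription of the dual boundary condition rather than to rederive it here; once it is granted, every entry of Table \ref{correspondanceUV} is a routine consequence of the Fubini identities above.
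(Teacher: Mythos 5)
Your proposal is correct and follows essentially the same route as the paper, whose entire proof consists of noting that $\Psi\in C^1(0,\infty)$ so that Theorem \ref{CoxRoesler} applies, and that the regular absorbing boundary of $U$ forces the regular reflecting boundary of $V$. The scale/speed swap $S_V=c\,M_U$, $M_V=c^{-1}S_U$, the Fubini identities $I_V(l)=J_U(l)$, $J_V(l)=I_U(l)$, and the drift computation giving $\mu_V(y)=c/2+\Psi(y)$ that you write out explicitly are exactly the content of Lemma \ref{lemmatable5} and of the general formula \eqref{generatorV} inside the paper's proof of Theorem \ref{CoxRoesler}, so you have merely inlined that material rather than taken a different path.
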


Combining the correspondences shown in Tables~\ref{correspondanceUZ} and~\ref{correspondanceUV}, we obtain Table~\ref{correspondanceVZ}, which relates the boundaries of $V$ and $Z$. Notice that the  boundaries $0$ of $V$ and $\infty$ of $Z$ are exchanged but the behaviors of the processes are not anymore.
\begin{table}[h!]
\begin{tabular}{|c|c|c|}
\hline
Integral condition & Boundary of $V$ &  Boundary  of $Z$ \\
\hline
$\mathcal{E}=\infty$ & $0$ entrance &  $\infty$  entrance  \\
\hline
$\mathcal{E}<\infty \text{ and } 2\lambda/c<1$ & $0$ regular reflecting &  $\infty$  regular reflecting\\
\hline
$2\lambda/c\geq 1$ & $0$ exit &  $\infty$  exit\\
\hline
$\int^{\infty}\frac{\ddr x}{\Psi(x)}=\infty$ & $\infty$ natural &  $0$  natural\\
\hline
$\int^{\infty}\frac{\ddr x}{\Psi(x)}<\infty$ & $\infty$ exit &  $0$  exit\\
\hline
\end{tabular}
\vspace*{3mm}
\caption{Boundaries of $V,Z$.}
\label{correspondanceVZ}
\vspace*{-2mm}
\end{table}

Denote by $T_y$ the first hitting time of $y\in [0,\infty]$ of the diffusion $(V_t,t\geq 0)$ and set $\mathscr{G}$ as its generator:
\begin{equation}\label{generatorV1}
\mathscr{G}f(x):=\frac{c}{2}xf''(x)+\left(\frac{c}{2}+\Psi(x)\right)f'(x), \quad f\in C^2,\ x\in (0,\infty).
\end{equation}
Then, from the general theory of one-dimensional diffusions, see e.g. 
Mandel \cite[Chapter V, Section 1]{zbMATH03287297} and Borodin and Salminen  \cite[Chapter II, Section 10]{MR1912205}, the Laplace transform of $T_y$ is expressed, for any $\theta>0$, as
\begin{equation}\label{hittingtimeV}\mathbb{E}_x[e^{-\theta T_y}]=\begin{cases} \frac{h_\theta^+(x)}{h_\theta^+(y)}, &x\leq y\\
\frac{h_\theta^-(x)}{h_\theta^-(y)}, &x\geq y, \end{cases}\end{equation}
where the functions $h_\theta^-$ and $h_\theta^+$ are $C^2(0,\infty)$ and respectively decreasing and increasing solutions to the equation
\begin{equation}\label{eigenvalueG}
\mathscr{G}h(x):=\frac{c}{2}xh''(x)+\left(\frac{c}{2}+\Psi(x)\right)h'(x)=\theta h(x),  \text{ for all } x\in (0,\infty).
\end{equation}
\bigskip

\noindent When $V$ has $0$ as a regular reflecting boundary, $h_\theta^+$ satisfies furthermore the following boundary condition: 
\begin{center}
$\underset{x\rightarrow 0+}{\lim} \frac{(h^+_\theta)'(x)}{s_V(x)}= 0$, 
\end{center}
where $s_V$ is the derivative of the scale function of $V$.
Note that in our setting $\infty$ is never regular for $V$, see Table \ref{correspondanceUV}, hence no condition is needed for $h_\theta^-$, see e.g. \cite[Chapter II, page 19]{MR1912205} for this fact. 

Last, for any $\theta>0$, the functions $h_\theta^{-}$ and $h_\theta^{+}$ have also the following properties\footnote{\eqref{hittingtimeV0} can be seen by taking $y=0$ (respectively $\infty$) in \eqref{hittingtimeV}} at $0$ and $\infty$,
\begin{align}\label{hittingtimeV0}
\text{if } 0 \text{ (respectively } \infty\text{) is accessible for } V \text{ then }
h_\theta^{-}(0)<\infty \text{ (respectively } h_\theta^{+}(\infty)<\infty\text{)}.  
\end{align}
Furthermore, since in our setting $\infty$ is either a natural boundary or an exit one (and is therefore absorbing in any case; see Table~\ref{correspondanceUV}), one has\footnote{\eqref{hthetainfty0} can be seen by taking $x=\infty$ in \eqref{hittingtimeV}} 
\begin{equation}\label{hthetainfty0}
h_\theta^{-}(\infty)=0.
\end{equation}
We refer again to \cite[Chapter II, Section 10]{MR1912205} for \eqref{hittingtimeV0} and \eqref{hthetainfty0}.
\smallskip

Let $\zeta_0$ be the extinction time of the process $Z$, i.e. $\zeta_0:=\inf\{t>0: Z_t=0\}$ and recall $\zeta_\infty$ the first explosion time. For any $z\in (0,\infty)$, we denote by $\mathbbm{e}_z$ an exponential random variable independent of $V$  with parameter $z$, and by $T_y^{\mathbbm{e}_z}$ the first hitting time of point $y\in [0,\infty]$ by the diffusion $V$ started from $\mathbbm{e}_z$.
\begin{theorem}[Laplace transform of the extinction time of LCSBPs]\label{theorem1}
Assume $\int^{\infty}\frac{\ddr x}{\Psi(x)}<\infty$. For any $0<z<\infty$ and $\theta>0$,  \begin{align}\label{extinctiontimeZ}
&\mathbb{E}_z[e^{-\theta \zeta_0}]=\int_{0}^{\infty}ze^{-zx}\frac{h_{\theta}^+(x)}{h_\theta^{+}(\infty)}\ddr x=\mathbb{E}[e^{-\theta T_\infty^{\mathbbm{e}_z}}].
\end{align}
In particular, if $\infty$ is not absorbing for $Z$ (i.e. if $2\lambda/c<1$) then $\mathbb{E}_\infty[e^{-\theta \zeta_0}]=\mathbb{E}_0[e^{-\theta T_\infty}]>0$. \\

In addition, if $Z$ does not explode (i.e. $\mathcal{E}=\infty$), then for all $z\in (0,\infty]$,
\[\mathbb{E}_z(\zeta_0)<\infty \text{ if and only if } \int_{0}^{x_0}\frac{\ddr x}{x}e^{-Q(x)}\int_{0}^{x}e^{Q(\eta)}\ddr \eta<\infty,\]
with $Q(x):=\int_{1}^{x}\frac{2\Psi(u)}{cu}\ddr u$. In this case, for all $z\in (0,\infty]$
\begin{equation}\label{meanextinction}\mathbb{E}_z(\zeta_0)=\int_{0}^{\infty}\ddr x\frac{2}{cx}e^{-Q(x)}\int_0^x(1-e^{-zv})e^{Q(v)}\ddr v.
\end{equation}

\end{theorem}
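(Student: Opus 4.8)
The plan is to push the bidual identity \eqref{joiningduals}---itself obtained by composing the Laplace duality between $Z$ and $U$ with the Siegmund duality between $U$ and $V$---to the boundary $x=\infty$, thereby reducing the extinction time of $Z$ to a hitting time of $\infty$ for the diffusion $V$, and then to feed this into the one-dimensional diffusion theory recalled around \eqref{hittingtimeV}. Throughout I use that, under $\int^\infty\frac{\ddr x}{\Psi(x)}<\infty$, the boundary $0$ is an exit (hence absorbing) boundary of $Z$ and, by Table \ref{correspondanceUV}, $\infty$ is an exit (hence absorbing) boundary of $V$; thus $\{Z_t=0\}=\{\zeta_0\le t\}$ and $\{V_t=\infty\}=\{T_\infty\le t\}$.

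\emph{Law of $\zeta_0$.} Starting from \eqref{joiningduals}, I let $x\to\infty$. On the left, with the convention $0\times\infty=0$, $e^{-xZ_t}\downarrow\mathbbm{1}_{\{Z_t=0\}}$, so $\mathbb{E}_z(e^{-xZ_t})\to\mathbb{P}_z(\zeta_0\le t)$. On the right, $\mathbb{P}_y(V_t>x)\downarrow\mathbb{P}_y(V_t=\infty)=\mathbb{P}_y(T_\infty\le t)$, and monotone convergence passes the limit under the integral. Hence $\mathbb{P}_z(\zeta_0\le t)=\int_0^\infty ze^{-zy}\mathbb{P}_y(T_\infty\le t)\ddr y=\mathbb{P}(T_\infty^{\mathbbm{e}_z}\le t)$, i.e. $\zeta_0\deq T_\infty^{\mathbbm{e}_z}$ under $\mathbb{P}_z$. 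Taking Laplace transforms in $t$ (equivalently, conditioning on the random starting point) gives $\mathbb{E}_z[e^{-\theta\zeta_0}]=\int_0^\infty ze^{-zx}\mathbb{E}_x[e^{-\theta T_\infty}]\ddr x$. Since $\infty$ is accessible for $V$ we have $h_\theta^+(\infty)<\infty$, and the $x\le y=\infty$ branch of \eqref{hittingtimeV} yields $\mathbb{E}_x[e^{-\theta T_\infty}]=h_\theta^+(x)/h_\theta^+(\infty)$; this is the first identity in \eqref{extinctiontimeZ}. For the case started from $\infty$, valid precisely when $2\lambda/c<1$, I run the same argument from \eqref{joiningdualsatboundary} (equivalently let $z\to\infty$, so $\mathbbm{e}_z\to0$): by Table \ref{correspondanceUV} the boundary $0$ is then entrance or regular reflecting for $V$, so $V$ can be launched from $0$, giving $\mathbb{E}_\infty[e^{-\theta\zeta_0}]=\mathbb{E}_0[e^{-\theta T_\infty}]=h_\theta^+(0)/h_\theta^+(\infty)$, which is positive because a non-absorbing $0$ together with an accessible $\infty$ force $T_\infty<\infty$ with positive probability.

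\emph{Mean.} Assume in addition $\mathcal{E}=\infty$, so that by Table \ref{correspondanceUV} the boundary $0$ is entrance and $\infty$ exit for $V$; then $T_\infty<\infty$ a.s.\ and $w(x):=\mathbb{E}_x[T_\infty]$ is finite in the interior. I would obtain $w$ either by differentiating $\theta\mapsto h_\theta^+(x)/h_\theta^+(\infty)$ at $\theta=0$, or directly as the solution of $\mathscr{G}w=-1$ on $(0,\infty)$ with the Dirichlet condition $w(\infty)=0$ at the exit boundary and the Neumann condition $\lim_{x\to0+}(w'/s_V)(x)=0$ at the entrance boundary. Using $s_V(x)=\frac{1}{cx}e^{-Q(x)}$ and the speed density $m_V(x)=e^{Q(x)}$, two integrations give $w(x)=\int_x^\infty\frac{2}{cy}e^{-Q(y)}\big(\int_0^y e^{Q(\eta)}\ddr\eta\big)\ddr y$. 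Averaging $w$ against the $\mathrm{Exp}(z)$ density and applying Fubini produces the double integral \eqref{meanextinction}, with $\mathbb{E}_\infty(\zeta_0)=w(0+)$ corresponding to $z\to\infty$. Finiteness then reduces to integrability at $0$: the tail at $\infty$ is harmless because the integrand is asymptotically $1/\Psi(y)$ and $\int^\infty\ddr y/\Psi(y)<\infty$, while near $0$ the controlling quantity is exactly $\int_0^{x_0}\frac{\ddr x}{x}e^{-Q(x)}\int_0^x e^{Q(\eta)}\ddr\eta$, the divergence of $S_V$ at $0$ (that is $\mathcal{E}=\infty$) being built into the weight.

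\emph{Main obstacle.} The delicate points are the two boundary passages. Justifying the interchange of limit and integral as $x\to\infty$ in \eqref{joiningduals} requires that $\infty$ be genuinely absorbing for $V$ and that the map $y\mapsto\mathbb{P}_y(T_\infty\le t)$ be measurable, which follow from the boundary classification and the Feller property. The real difficulty, however, lies in the mean computation at the entrance boundary $0$: since the scale function $S_V$ is infinite there, one must justify that the Neumann condition $(w'/s_V)(0+)=0$ singles out $\mathbb{E}_\cdot[T_\infty]$ among the solutions of $\mathscr{G}w=-1$, and then carry out the near-$0$ analysis that converts finiteness of $w(0+)$ (hence of $\mathbb{E}_z(\zeta_0)$ for every $z\in(0,\infty]$) into the stated integral test.
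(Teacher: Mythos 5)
Your first part is exactly the paper's argument: let $x\to\infty$ in \eqref{joiningduals}, use that $\infty$ is absorbing for $V$ so that $\{V_t=\infty\}=\{T_\infty\le t\}$, integrate against an independent $\mathbbm{e}_\theta$, and invoke \eqref{hittingtimeV}; the case $z=\infty$ is likewise handled as in the paper.

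For the mean you take a genuinely different route: you solve $\mathscr{G}w=-1$ with $w(\infty)=0$ and the no-flux condition $(w'/s_V)(0+)=0$ at the entrance boundary, obtaining $w(x)=2\int_x^\infty s_V(y)M_V(0,y]\,\ddr y=\int_x^\infty\frac{2}{cy}e^{-Q(y)}\bigl(\int_0^y e^{Q(\eta)}\ddr\eta\bigr)\ddr y$, whereas the paper lets $a\to0$ in Karlin--Taylor's two-boundary formula for $\mathbb{E}_x(T_a\wedge T_\infty)$. The justification you flag as delicate is easily supplied without the Neumann condition: $T_a\wedge T_\infty\uparrow T_\infty$ as $a\downarrow0$ (since $0$ is inaccessible), so monotone convergence applies to the Karlin--Taylor formula, and computing that limit carefully yields precisely your $w$. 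But you then assert that averaging $w$ against the $\mathrm{Exp}(z)$ density ``produces \eqref{meanextinction}'' --- it does not. The average of your $w$ is $\int_0^\infty\frac{2}{cx}e^{-Q(x)}(1-e^{-zx})\int_0^x e^{Q(\eta)}\ddr\eta\,\ddr x$, with the factor $(1-e^{-zx})$ \emph{outside} the inner integral, while \eqref{meanextinction} has $(1-e^{-z\eta})$ inside, which is strictly smaller for finite $z$. The discrepancy traces back to the paper's own proof: in the limit $a\to0$, the second Karlin--Taylor term $2\frac{S_V[x,\infty]}{S_V[a,\infty]}\int_a^x S_V(a,\eta]\ddr M_V(\eta)$ does \emph{not} vanish, because the vanishing prefactor multiplies an integral that diverges at the same rate; indeed $\int_a^x S_V(a,\eta]\ddr M_V(\eta)=S_V(a,\infty]M_V(a,x]-\int_a^x S_V(\eta,\infty]\ddr M_V(\eta)$, so the term converges to $2S_V[x,\infty]M_V(0,x]>0$, and adding it to $2\int_x^\infty S_V(v,\infty]\ddr M_V(v)$ recovers your $w$ by integration by parts. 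A sanity check with $\Psi(u)=u^2$, where $Z$ is itself the logistic Feller diffusion and $\mathbb{E}_z(\zeta_0)=2\int_0^z s_Z(y)M_Z(y,\infty)\ddr y$ can be computed directly, confirms your expression and not \eqref{meanextinction} (numerically at $c=4$, $z=1$: direct value $\approx 0.98$, your formula $\approx 0.98$, formula \eqref{meanextinction} $\approx 0.77$). So your route, carried through consistently, actually corrects the displayed identity: the two expressions agree in the limit $z\to\infty$, since the Exp$(z)$-average of the extra term tends to $2\lim_{\eta\to0+}S_V(\eta,\infty]M_V(0,\eta]=0$ at an entrance boundary, and they differ by a quantity that is finite exactly when the stated integral test holds, so the finiteness criterion and all qualitative conclusions of the theorem stand; only the exact formula \eqref{meanextinction} for finite $z$ should carry the factor $(1-e^{-zx})$ outside the inner integral.
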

\begin{remark} 
The Laplace transform of $\zeta_0$ can be studied via the second order differential equation \eqref{eigenvalueG}. In a more probabilistic fashion, the identity \eqref{extinctiontimeZ} ensures that the time of extinction of the LCSBP started from $z$ has the same law as the time of explosion of the diffusion $V$ started from an independent exponential variable with parameter $z$. The problem of studying $\zeta_0$ is thus transfered into the study of $T_\infty$.
\end{remark}
\begin{remark} Extinction of LCSBPs has been 
studied in \cite{MR2134113} under a log-moment assumption, called $(L)$, on the  Lévy measure $\pi$: $\int^{\infty}\log(h)\pi(\ddr h)<\infty$. Lambert has found, amongst other things, a representation of the Laplace transform of the extinction time in terms of the implicit solution of a certain non-homogeneous Riccati equation, see \cite[Theorem 3.9]{MR2134113}. Note that $\underset{x\rightarrow 0+}{\lim} Q(x)<\infty$ if and only if the assumption $(L)$ holds, see \cite[Proposition 3.13]{MR3940763}. In this case, we can easily check that the condition for $\mathbb{E}_z(\zeta_0)$ to be finite holds. Moreover \eqref{meanextinction} agrees with Equation (9) in \cite[Theorem 3.9]{MR2134113}, where the parameter of competition is $c$ instead of our $c/2$.
\end{remark}
In the next theorem we study $\zeta_\infty$, the first explosion time of the LCSBP. 

\begin{theorem}[Laplace transform of the first explosion time of LCSBPs]\label{theorem2} Assume $\mathcal{E}<\infty$. For all $z\in (0,\infty)$,\begin{align}
&\mathbb{E}_z[e^{-\theta \zeta_\infty}]=\int_{0}^{\infty}ze^{-zx}\frac{h_{\theta}^-(x)}{h_\theta^{-}(0)}\ddr x=\mathbb{E}[e^{-\theta T_0^{\mathbbm{e}_z}}] \label{explosiontimeZ}.
\end{align}
\end{theorem}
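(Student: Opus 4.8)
The plan is to identify the law of $\zeta_\infty$ under $\mathbb{P}_z$ with that of the first hitting time $T_0$ of $0$ by the diffusion $V$ issued from an independent exponential variable $\mathbbm{e}_z$, and then to read the Laplace transform off from \eqref{hittingtimeV}. As a first reduction, I would recall that any extension of the minimal LCSBP coincides with it up to the first explosion time, so the law of $\zeta_\infty$ may be computed on the minimal process $\zmin$, which is absorbed at $\infty$. For this process $\{\zmin_t<\infty\}=\{\zeta_\infty>t\}$, and, the function $x\mapsto e^{-x\zmin_t}$ being bounded by $1$, tending to $1$ on $\{\zmin_t<\infty\}$ and being identically $0$ on $\{\zmin_t=\infty\}$, dominated convergence gives
\[
\mathbb{P}_z(\zeta_\infty>t)=\lim_{x\rightarrow 0+}\mathbb{E}_z\big[e^{-x\zmin_t}\big].
\]

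Next I would feed in the duality \eqref{joiningduals}, applied to the minimal process and to the diffusion $V$ taken absorbed at $0$; since the first hitting time $T_0$ is insensitive to the boundary behaviour prescribed at $0$, this replacement does not affect the final formula. Letting $x\rightarrow 0+$ in $\mathbb{E}_z(e^{-x\zmin_t})=\int_0^\infty ze^{-zy}\mathbb{P}_y(V_t>x)\ddr y$ and invoking monotone convergence (as $\mathbb{P}_y(V_t>x)\uparrow\mathbb{P}_y(V_t>0)$ when $x\downarrow 0$) yields
\[
\mathbb{P}_z(\zeta_\infty>t)=\int_0^\infty ze^{-zy}\,\mathbb{P}_y(V_t>0)\,\ddr y=\int_0^\infty ze^{-zy}\,\mathbb{P}_y(T_0>t)\,\ddr y=\mathbb{P}\big(T_0^{\mathbbm{e}_z}>t\big),
\]
where I used that for $V$ absorbed at $0$ one has $\{V_t>0\}=\{T_0>t\}$. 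Being valid for every $t\geq 0$ (and, on letting $t\rightarrow\infty$, matching the no-explosion masses), this shows that $\zeta_\infty$ under $\mathbb{P}_z$ has the same law on $[0,\infty]$ as $T_0^{\mathbbm{e}_z}$.

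Finally I would compute the Laplace transform. Conditioning on $\mathbbm{e}_z=y$ and inserting \eqref{hittingtimeV} with target $0$ and starting point $y\geq 0$, namely $\mathbb{E}_y[e^{-\theta T_0}]=h_\theta^-(y)/h_\theta^-(0)$, gives
\[
\mathbb{E}_z[e^{-\theta\zeta_\infty}]=\mathbb{E}\big[e^{-\theta T_0^{\mathbbm{e}_z}}\big]=\int_0^\infty ze^{-zy}\,\mathbb{E}_y[e^{-\theta T_0}]\,\ddr y=\int_0^\infty ze^{-zy}\,\frac{h_\theta^-(y)}{h_\theta^-(0)}\,\ddr y,
\]
which is the claimed identity; here $h_\theta^-(0)\in(0,\infty)$ because $0$ is accessible for $V$ when $\mathcal{E}<\infty$, so that $h_\theta^-(0)<\infty$ by the stated boundary property while $h_\theta^->0$.

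The main obstacle I anticipate is the regular reflecting subcase ($2\lambda/c<1$), where the extended process returns from $\infty$ and $\{Z_t<\infty\}$ no longer coincides with $\{\zeta_\infty>t\}$; this is exactly what forces the reduction to the minimal process $\zmin$ and the use of \eqref{joiningduals} in its minimal form, together with the observation that the first hitting time of $0$ does not see whether $0$ is reflecting or absorbing for $V$. The exit subcase ($2\lambda/c\geq 1$) is immediate since $\infty$ is then absorbing for $Z$. The remaining points, namely justifying the two limit interchanges and checking $h_\theta^-(0)\in(0,\infty)$, are routine.
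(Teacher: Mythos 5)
There is a genuine gap, and it sits exactly where you flag the ``main obstacle''. In the regular reflecting subcase ($\mathcal{E}<\infty$, $2\lambda/c<1$), the identity \eqref{joiningduals} as established in the paper pairs the \emph{extended} process $Z$ (reflected at $\infty$) with the diffusion $V$ \emph{reflected} at $0$: it comes from chaining the Laplace duality \eqref{duality1}, which holds for the extension of $Z$ against $U$ with $0$ regular absorbing, with the Siegmund duality, which then produces $V$ with $0$ regular reflecting. Applied as stated and letting $x\rightarrow 0+$, it yields only the uninformative $\mathbb{P}_z(Z_t<\infty)=1=\int_0^\infty ze^{-zy}\,\mathbb{P}_y(V^{\mathrm{r}}_t>0)\,\ddr y$, since the reflected $V$ spends zero Lebesgue time at $0$. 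The version you actually invoke --- \eqref{joiningduals} ``applied to the minimal process and to the diffusion $V$ taken absorbed at $0$'' --- is precisely the Laplace duality $\mathbb{E}_z[e^{-xZ^{\min}_t}]=\mathbb{E}_x[e^{-zU^{\mathrm{r}}_t}]$ of Theorem \ref{theorem4} combined with \eqref{siegmundUrVa}, which the paper emphasizes was \emph{not} previously known (end of Section \ref{backgroundboundary}) and which it proves only \emph{after} Theorem \ref{theorem2}, using Theorem \ref{theorem2} in an essential way to cancel the boundary terms in the resolvent decompositions \eqref{extendedresolventZ} and \eqref{extendedresolventV}. So within the paper's development your argument is circular, and as a standalone argument the key step is simply asserted. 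Your justification --- that $T_0$ is insensitive to the boundary condition of $V$ at $0$ --- is true but beside the point: the issue is not the law of $T_0$ but \emph{which pair of processes satisfies the duality identity}, and swapping the boundary condition of $V$ at $0$ changes its one-dimensional laws $\mathbb{P}_y(V_t>x)$ for all $t$ past $T_0$, hence changes the right-hand side of \eqref{joiningduals}. (Your remark that the exit subcase $2\lambda/c\geq 1$ is immediate is correct, since there the extension coincides with $\zmin$ and $V$ has $0$ as an exit; the entrance case is excluded by $\mathcal{E}<\infty$.)

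The paper circumvents this by working on the other side of the duality diagram, at the level of generators rather than semigroups. It defines $f_\theta^+(z):=\int_0^\infty ze^{-xz}h_\theta^-(x)\,\ddr x$ and shows, via the generator duality $\mathscr{L}e_x(z)=\mathscr{A}e_z(x)$ of Lemma \ref{lemmadualityLA} and two integrations by parts, that $\mathscr{L}f_\theta^+=\theta f_\theta^+$ with $f_\theta^+(0)=0$ and $f_\theta^+(\infty)=h_\theta^-(0)<\infty$ (Lemma \ref{invariantfunctionftheta}); it then exploits the martingale problem for $\zmin$, obtained through the Lamperti time change of the Ornstein--Uhlenbeck process, to show that $\big(e^{-\theta t}f_\theta^+(\zmin_t)\big)$ is a local martingale, and optional stopping plus dominated convergence give $\mathbb{E}_z[e^{-\theta\zeta_\infty}]=f_\theta^+(z)/f_\theta^+(\infty)$, which is then rewritten via \eqref{hittingtimeV} as $\mathbb{E}[e^{-\theta T_0^{\mathbbm{e}_z}}]$ (Lemma \ref{lemmaLTexplosion}). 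To repair your proof you would need an independent derivation of the duality for $\zmin$ --- and the paper explicitly notes that verifying the hypotheses of the standard duality criteria (Ethier--Kurtz, Jansen--Kurt) is delicate here precisely because of the boundary behaviors --- so the eigenfunction-martingale route is not a mere alternative but the device that breaks the circularity.
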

One may wonder how Theorem \ref{theorem1} and Theorem \ref{theorem2}  work in the setting of the case without competition $c=0$. This is explained in Section \ref{remarkwithoutcompetition}.
\begin{remark}
As before we see that $\zeta_\infty$ under $\mathbb{P}_z$, has the same law as the first time of extinction (i.e. of hitting $0$) of $V$ started from an independent exponential random variable with parameter~$z$. 

\begin{itemize}
\item[i)] The condition  $\mathcal{E}<\infty$ turns out to be Feller's test for accessibility of $0$ for $V$ (that simplifies, since $0$ cannot be natural), Table \ref{correspondanceUV}. This yields also a proof for explosion of the LCSBP based on a duality argument.  

\item[ii)] Moreover, since $\underset{x\rightarrow 0}{\lim } h_\theta^{-}(x)=h_\theta^{-}(0)$, we see by \eqref{explosiontimeZ} and Lebesgue's theorem that for all $\theta>0$, $$\underset{z\rightarrow \infty}{\lim}\mathbb{E}_{z}[e^{-\theta \zeta_\infty}]=\underset{z\rightarrow \infty}{\lim} \int_{0}^{\infty}\ddr x \ ze^{-zx}\mathbb{E}_x[e^{-\theta T_0}]=1.$$ We recover here the fact that the boundary $\infty$ for $Z$ is regular for itself.
\end{itemize}
\end{remark}

We establish now a Laplace duality relationship for the minimal process $Z^{\min}$. We focus on the case \(\mathcal{E}<\infty\), since otherwise, by Theorem~\ref{backgroundtheorem}-ii), the minimal process never hits its boundary ~$\infty$. Moreover, when \(2\lambda/c \geq 1\), the boundary \(\infty\) of \(Z\) is an exit, and hence \(Z\) coincides with the minimal process. Therefore, it remains to consider only the case \(\mathcal{E}<\infty\) and \(2\lambda/c<1\). In this setting, the minimal process \((Z_t^{\min},t\geq 0)\) may be interpreted as the logistic CSBP with \(\infty\) regular absorbing, that is, stopped upon hitting \(\infty\).

\begin{theorem}\label{theorem4} Assume $\mathcal{E}<\infty \text{ and }2\lambda/c<1$. For any $x,z\in (0,\infty)$ and $t\geq 0$ \begin{equation}\label{dualityformin}\mathbb{E}_z[e^{-xZ^{\mathrm{min}}_t}]=\mathbb{E}_x[e^{-zU^{\mathrm{r}}_t}],\end{equation}
with $(U_t^{\mathrm{r}},t\geq 0)$ the diffusion solution to \eqref{sdeU} with boundary $0$ regular reflecting. 
In particular, \begin{equation}\label{tailexplosion}
\mathbb{P}_z(\zeta_\infty>t)=\mathbb{E}_{0+}[e^{-zU_t^{\mathrm{r}}}]<1, \ z\in [0,\infty),\ t>0.
\end{equation}
\end{theorem}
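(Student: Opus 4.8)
The plan is to lift the infinitesimal duality of Lemma \ref{lemmadualityLA} to the semigroup level by the classical generator (Laplace) duality argument; the only genuine work is to check that the exponential duality function lies in the domains dictated by the boundary conditions ``$\infty$ absorbing for $Z^{\mathrm{min}}$'' and ``$0$ regular reflecting for $U^{\mathrm{r}}$''. Before anything else I would record the preliminary observation that under $\mathcal{E}<\infty$ one necessarily has $\lambda>0$: the weight $\frac{1}{x}$ in $\mathcal{E}=\int_0^{x_0}\frac{\ddr x}{x}\exp(\frac{2}{c}\int_x^{x_0}\frac{\Psi(u)}{u}\ddr u)$ is non-integrable at $0$, so finiteness forces $\int_x^{x_0}\frac{\Psi(u)}{u}\ddr u\to-\infty$ as $x\to0+$, which forces $\Psi(0)=-\lambda<0$. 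This is the fact that makes the whole boundary analysis go through.

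Next I would verify the reflecting boundary condition at $0$ for the duality function. With $e_z(x)=e^{-zx}$ and $s_U(x)=\exp(\int_{x_0}^x\frac{2\Psi(u)}{cu}\ddr u)$ one has $\frac{e_z'(x)}{s_U(x)}=-z e^{-zx}\exp(-\int_{x_0}^x\frac{2\Psi(u)}{cu}\ddr u)$; since $\lambda>0$ gives $\int_x^{x_0}\frac{2\Psi(u)}{cu}\ddr u\to-\infty$, hence $s_U(0+)=\infty$, we get $\lim_{x\to0+}\frac{e_z'(x)}{s_U(x)}=0$, which is exactly the Neumann-in-scale condition defining the domain of the reflecting generator $\mathscr{A}^{\mathrm{r}}$ of $U^{\mathrm{r}}$; moreover $\mathscr{A}e_z(x)=\frac{c}{2}xz^2e^{-zx}+z\Psi(x)e^{-zx}$ extends continuously to $x=0$ with value $-\lambda z$. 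Thus $e_z\in\mathcal{D}(\mathscr{A}^{\mathrm{r}})$ with $\mathscr{A}^{\mathrm{r}}e_z=\mathscr{A}e_z$ and, crucially, no boundary local-time term at $0$ appears in Dynkin's formula for $U^{\mathrm{r}}$. On the $Z$ side, Lemma \ref{lemmadualityLA} reads $\mathscr{L}e_x(z)=\mathscr{A}e_z(x)$, and as a function of $z$ this vanishes at both boundaries, $\mathscr{L}e_x(0)=\mathscr{L}e_x(\infty)=0$, matching absorption of $Z^{\mathrm{min}}$ at $0$ and $\infty$; hence $e^{-xZ^{\mathrm{min}}_t}-\int_0^t\mathscr{L}e_x(Z^{\mathrm{min}}_s)\ddr s$ is a genuine martingale (with $\mathscr{L}e_x$ extended by $0$ at the boundaries), for instance by stopping the corresponding martingale for the extended process at $\zeta_\infty$.

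I would then run the duality argument proper. Fixing $t>0$ and $z,x\in(0,\infty)$, and taking $Z^{\mathrm{min}}$ started at $z$ independent of $U^{\mathrm{r}}$ started at $x$, set $\phi(s):=\mathbb{E}_z\otimes\mathbb{E}_x[\exp(-Z^{\mathrm{min}}_s U^{\mathrm{r}}_{t-s})]$ for $s\in[0,t]$. Differentiating in $s$ and using the two martingale problems gives $\phi'(s)=\mathbb{E}[\mathscr{L}e_{U^{\mathrm{r}}_{t-s}}(Z^{\mathrm{min}}_s)-\mathscr{A}e_{Z^{\mathrm{min}}_s}(U^{\mathrm{r}}_{t-s})]=0$: the interior terms cancel by Lemma \ref{lemmadualityLA}, the contribution on $\{Z^{\mathrm{min}}_s\in\{0,\infty\}\}$ vanishes since $\mathscr{L}e_x$ vanishes there, and the potential reflection term at $0$ of $U^{\mathrm{r}}$ vanishes by the scale-derivative computation above. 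Constancy of $\phi$ then yields \eqref{dualityformin} for $z,x\in(0,\infty)$ through $\phi(t)=\mathbb{E}_z[e^{-xZ^{\mathrm{min}}_t}]$ and $\phi(0)=\mathbb{E}_x[e^{-zU^{\mathrm{r}}_t}]$, and the extension to $z,x\in[0,\infty]$ follows by monotone/dominated limits together with Feller continuity of $U^{\mathrm{r}}$ in its starting point (legitimate since $0$ is regular, so $U^{\mathrm{r}}$ may start from $0$).

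Finally, for \eqref{tailexplosion} I would let $x\downarrow0$ in \eqref{dualityformin}: with the convention $0\times\infty=0$ one has $e^{-xZ^{\mathrm{min}}_t}\to\mathbbm{1}_{\{Z^{\mathrm{min}}_t<\infty\}}$ pointwise, so dominated convergence gives $\mathbb{E}_z[e^{-xZ^{\mathrm{min}}_t}]\to\mathbb{P}_z(Z^{\mathrm{min}}_t<\infty)=\mathbb{P}_z(\zeta_\infty>t)$, the last equality because $\infty$ is absorbing for $Z^{\mathrm{min}}$, while the right-hand side converges to $\mathbb{E}_0[e^{-zU^{\mathrm{r}}_t}]$. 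The main obstacle is precisely the boundary bookkeeping at $0$: proving that the reflecting generator of $U^{\mathrm{r}}$ applied to $e_z$ carries no local-time term, which is where the chain $\mathcal{E}<\infty\Rightarrow\lambda>0\Rightarrow s_U(0+)=\infty$ is decisive. Once this is in hand the argument is the standard duality computation, and all interchanges of expectation and differentiation are justified by the boundedness of $e_z$ and the local boundedness of $\mathscr{A}e_z$.
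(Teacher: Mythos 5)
Your opening ``preliminary observation'' is false, and it matters. Finiteness of $\mathcal{E}$ does force $\int_x^{x_0}\frac{\Psi(u)}{u}\ddr u\to-\infty$ as $x\to0+$, but because $1/u$ is non-integrable at $0$ this divergence can perfectly well occur with $\Psi(0)=-\lambda=0$ and $\Psi(u)\to0^-$ slowly (e.g.\ $\Psi(u)\sim-\alpha/\log(1/u)$, slowly varying). The paper relies on exactly such mechanisms: Example \ref{example}-(3) exhibits LCSBPs with $\infty$ regular reflecting and $\lambda=0$, and Theorem \ref{Hausdorff} explicitly allows $\mathrm{dim}_H(\mathcal{I})=2\lambda/c=0$. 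The harmless part is that the conclusion you actually need, $s_U(0+)=\infty$, survives: it follows from $\mathcal{E}<\infty$ alone. The damaging part is that $\lambda>0$ is silently doing your discriminating work between the reflecting and absorbing boundary conditions at $0$: when $\lambda>0$, $\mathscr{A}e_z(0+)=-\lambda z\neq0$ excludes $e_z$ from the domain of the stopped diffusion $U^{\mathrm{a}}$, but when $\lambda=0$ the function $e_z$ satisfies \emph{both} your Neumann-in-scale condition and $\mathscr{A}e_z(0+)=0$, hence solves the same pointwise martingale problem for $U^{\mathrm{r}}$ and for $U^{\mathrm{a}}$. Your interpolation argument, run verbatim with $U^{\mathrm{a}}$, would then deliver $\mathbb{E}_z[e^{-xZ^{\min}_t}]=\mathbb{E}_x[e^{-zU^{\mathrm{a}}_t}]$; combined with \eqref{duality1}, which pairs the \emph{extended} $Z$ with $U^{\mathrm{a}}$, this forces $Z_t\overset{d}{=}Z^{\min}_t$, false under $\mathcal{E}<\infty$ since $\mathbb{P}_z(Z^{\min}_t=\infty)>0=\mathbb{P}_z(Z_t=\infty)$. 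So as written your computation proves a false statement in the admissible case $\lambda=0$, which means a step of it is invalid.

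The step that fails is the cancellation on $\{Z^{\min}_s=\infty\}$. You only check that $\mathscr{L}e_x$ vanishes there, but the symmetric term in $\phi'(s)$ involves the evolution of $u\mapsto\mathbb{E}[H(\infty,U_u)]$ with $H(\infty,x')=\mathbbm{1}_{\{x'=0\}}$ (by the convention $0\times\infty=0$), a discontinuous function to which no generator bookkeeping applies. For $U^{\mathrm{a}}$ this term equals the strictly increasing map $u\mapsto\mathbb{P}_x(\tau_0\leq u)$, which is the hidden nonzero contribution; for $U^{\mathrm{r}}$ it vanishes only because $\mathbb{P}_x(U^{\mathrm{r}}_u=0)=0$ for all $u>0$. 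Making this corner analysis rigorous inside the Ethier--Kurtz differentiation is precisely the difficulty the paper invokes when it declines this route and instead proves \eqref{dualityformin} by decomposing the resolvent of the extended $Z$ at $\zeta_\infty$ and of $V^{\mathrm{r}}$ at $T_0$, using the two Siegmund dualities \eqref{siegmundUrVa} together with Theorem \ref{theorem2}, and then inverting Laplace transforms via the c\`adl\`ag/continuity regularity of the two semigroup maps. A further, smaller flaw: you justify the $Z^{\min}$-side martingale ``by stopping the corresponding martingale for the extended process,'' but it is not established (and should not be expected) that $e^{-xZ_t}-\int_0^t\mathscr{L}e_x(Z_s)\ddr s$ is a martingale for the process reflected at $\infty$; the correct justification is the direct local-martingale argument through the Lamperti time change, exactly as in the paper's proof of Lemma \ref{lemmaLTexplosion}. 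In summary: the skeleton (generator duality plus boundary conditions) is reasonable, but the proposal as it stands rests on a false reduction to $\lambda>0$ and omits the corner argument that is the genuine content of the theorem.
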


\noindent Theorem \ref{theorem4} completes the classification of the boundaries by extending Table \ref{correspondanceUZ} with the correspondences (unaddressed in \cite{MR3940763}) in Table \ref{correspondancemin}. 

\begin{table}[h!]
\begin{center}
\begin{tabular}{|c|c|c|}
\hline
Integral condition & Boundary of $U$ &  Boundary  of $Z$ \\
\hline
$\mathcal{E}<\infty \text{ and } 2\lambda/c<1$ & $0$ regular reflecting &  $\infty$  regular absorbing\\
\hline
\end{tabular}
\vspace*{4mm}
\caption{}
\label{correspondancemin}
\end{center}
\end{table}
We identify now the inverse local time at $\infty$ of the LCSBP $Z$ with boundary $\infty$ regular reflecting.  Denote by $(L^Z_t,t\geq 0)$ the local time at $\infty$ of $Z$ and by $(\tau^Z_x,0\leq x< \xi)$ its right-continuous inverse, namely for any $x\geq 0$, $\tau^Z_x:=\inf\{t\geq 0: L_t^{Z}>x\}$  and $\xi:=L_\infty^{Z}=\inf\{x\geq 0: \tau_x^{Z}=\infty\}\in (0,\infty]$.  One has, see e.g. \cite[Chapter IV, Theorem 4-(iii)]{Bertoin96},
\[\mathcal{I}:=\overline{\{t\geq 0: Z_t=\infty\}}=\overline{\{\tau^Z_x,0\leq x<\xi\}} \text{ a.s.}\]
Moreover the process $(\tau^Z_x,x<\xi)$ is a subordinator with life-time $\xi$, see \cite[Chapter IV, Theorem 8]{Bertoin96}. We denote  by $\kappa_Z$ its Laplace exponent. Note that since $\infty$ is regular reflecting, the subordinator $\tau^Z$ has no drift. Recall also from Proposition \ref{propUVZ} that $0$ is regular reflecting for the bidual process $V$. We call $(L^V_t,t\geq 0)$ its local time at $0$. 
\begin{theorem}\label{theorem3}
Assume $\infty$ regular reflecting ($\mathcal{E}<\infty \text{ and } 2\lambda/c<1$), $(L^Z_t,t\geq 0)$ has the same law as $(L^V_t,t\geq 0)$. Furthermore $\kappa_Z(0)>0$ (and $\mathcal{I}$ is bounded a.s.) if and only if $\Psi$ is positive in a neigbourhood of $\infty$ (equivalently $-\Psi$ is not the Laplace exponent of a subordinator).
\end{theorem}
\begin{remark} 
When $\kappa_Z(0)>0$ (i.e. when $-\Psi$ is not the Laplace exponent of a subordinator), the subordinator $\tau^Z$ has a finite life-time $\xi$, this corresponds to the fact that the process $Z$ makes an infinite excursion away from infinity. According to Theorem~\ref{backgroundtheorem}-iii) -- see also \cite[Lemma~7.7]{MR3940763} -- the process converges towards $0$ a.s. in its infinite excursion (and is absorbed if and only if Grey's condition holds, see Theorem~\ref{backgroundtheorem}-iii)).
\end{remark}
Theoretically, numerous properties of local times of diffusions can thus be applied to the study of $\kappa_Z$ in order for instance to represent the Lévy measure of $\tau_Z$ or its density, see e.g. Borodin and Salminen  \cite[Chapter II, Section 4]{MR1912205}. The latter quantities have no explicit formula when the branching mechanism $\Psi$ is general. However we can identify the packing and Hausdorff dimensions of $\mathcal{I}$.

\begin{theorem}\label{Hausdorff} Assume $\mathcal{E}<\infty \text{ and } \frac{2\lambda}{c}<1$, \[\mathrm{dim}_P(\mathcal{I})=\mathrm{dim}_H(\mathcal{I})=2\lambda/c\in [0,1) \text{ a.s.}\]
\end{theorem}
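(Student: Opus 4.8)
The plan is to compute both dimensions of the closed range $\mathcal{I}=\overline{\{\tau^Z_x,0\leq x<\xi\}}$ by identifying the Laplace exponent $\kappa_Z$ of the inverse local time near $0$ and near $\infty$, since the dimensions of the range of a subordinator are governed by the lower and upper indices of its Laplace exponent. By Theorem \ref{theorem3}, the local time $(L^Z_t,t\geq 0)$ agrees in law with $(L^V_t,t\geq 0)$, the local time at the regular reflecting boundary $0$ of the diffusion $V$ of \eqref{SDEV}. Hence $\tau^Z$ is (in law) the inverse local time at $0$ of $V$, and its Laplace exponent $\kappa_Z$ is computable from the scale function $S_V$ and speed measure $M_V$ of $V$ via the general theory of diffusion local times (see \cite[Chapter II]{MR1912205}). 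The dimension result will then follow from the classical Fristedt--Pruitt / Horowitz type statement that for a subordinator with Laplace exponent $\kappa$, one has $\dim_H(\overline{\mathrm{range}})=\liminf_{\theta\to\infty}\frac{\log\kappa(\theta)}{\log\theta}$ and $\dim_P(\overline{\mathrm{range}})=\limsup_{\theta\to\infty}\frac{\log\kappa(\theta)}{\log\theta}$, so the task reduces to showing that $\kappa_Z(\theta)$ is regularly varying at $\infty$ with index exactly $2\lambda/c$, which makes both indices coincide and equal $2\lambda/c$.

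First I would make the reduction to a local computation at $0$. Since dimensions of the range are determined by the small-time, i.e. large-$\theta$, behaviour of $\kappa_Z$, and since that in turn is governed by the behaviour of $S_V$ and $M_V$ in a neighbourhood of the reflecting boundary $0$, I only need the local structure of $V$ near $0$. From \eqref{SDEV} the scale density is $s_V(x)=\frac{1}{cx}\exp\left(-\int_{x_0}^{x}\frac{2\Psi(u)}{cu}\,\ddr u\right)$ and the speed density is $m_V(x)=\frac{1}{cx s_V(x)}=\exp\left(\int_{x_0}^{x}\frac{2\Psi(u)}{cu}\,\ddr u\right)$. As $x\to 0+$, the branching mechanism satisfies $\Psi(x)\to -\lambda$, so $\frac{2\Psi(u)}{cu}\sim -\frac{2\lambda}{cu}$ and the dominant factor is $\exp\left(\frac{2\lambda}{c}\int_x^{x_0}\frac{\ddr u}{u}\right)=(x_0/x)^{2\lambda/c}$ up to a slowly varying correction coming from the integrable remainder $\frac{2(\Psi(u)+\lambda)}{cu}$. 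Consequently $s_V(x)$ behaves like $x^{-1-2\lambda/c}$ times a slowly varying factor and $m_V(x)$ like $x^{2\lambda/c-1}$, which is exactly the regularly-varying structure that should produce an index $2\lambda/c$.

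Next I would feed this into the Laplace-exponent formula for the inverse local time of a regular reflecting diffusion. The Laplace exponent $\kappa_Z(\theta)$ is expressible (again via \cite[Chapter II, Section 4]{MR1912205}) through the increasing eigenfunction $h^+_\theta$ solving \eqref{eigenvalueG} with the reflecting boundary condition $\lim_{x\to 0+}(h^+_\theta)'(x)/s_V(x)=0$, namely as the right-hand derivative $\kappa_Z(\theta)=\lim_{x\to 0+}(h^+_\theta)'(x)/\big(s_V(x)h^+_\theta(0)\big)$ or equivalently through a Krein-type representation involving $\int_0 S_V\,\ddr M_V$. The crucial step is a Tauberian/Karamata argument: the large-$\theta$ asymptotics of $\kappa_Z(\theta)$ are controlled by the small-$x$ behaviour of the measure $S_V\,\ddr M_V$, and because $\ddr M_V$ has the regularly varying density $m_V(x)\asymp x^{2\lambda/c-1}$ while $S_V(0,x]\asymp x^{-2\lambda/c}$ (up to slowly varying factors, using $S_V(0)<\infty$ exactly when $2\lambda/c<1$ as in Theorem \ref{theorem3}), the convolution-type integral defining $\kappa_Z$ is regularly varying at $\infty$ with index $2\lambda/c$. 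This yields $\lim_{\theta\to\infty}\frac{\log\kappa_Z(\theta)}{\log\theta}=2\lambda/c$, and since the limit exists both lower and upper indices coincide, giving $\dim_H(\mathcal{I})=\dim_P(\mathcal{I})=2\lambda/c$ almost surely.

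The main obstacle I anticipate is making the Tauberian transfer rigorous in the presence of the slowly varying correction factor $\exp\big(\int_x^{x_0}\frac{2(\Psi(u)+\lambda)}{cu}\ddr u\big)$: one must verify that this factor is genuinely slowly varying at $0$ (which requires that $\int_0 \frac{\Psi(u)+\lambda}{u}\ddr u$ be controlled, not necessarily convergent) and that it does not perturb the index, only the slowly varying part of $\kappa_Z$. A secondary technical point is establishing the precise Laplace-exponent formula in the reflecting case and confirming that the drift of $\tau^Z$ vanishes (already noted after Theorem \ref{theorem3}), so that $\kappa_Z$ carries no linear term that would otherwise force index $1$; this is where the hypothesis $2\lambda/c<1$ enters decisively, both to keep $S_V(0)<\infty$ and to guarantee the index lies in $[0,1)$ as claimed.
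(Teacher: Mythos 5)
Your opening reduction --- invoking Theorem \ref{theorem3} to identify $\mathcal{I}$ in law with the zero set of $V$ at its regular reflecting boundary $0$ --- is exactly the paper's first step. From there you genuinely diverge: you want to show $\kappa_Z$ is regularly varying at $\infty$ with index $2\lambda/c$ and then use the Horowitz/Fristedt--Taylor formulas $\dim_H=\liminf\log\kappa(\theta)/\log\theta$, $\dim_P=\limsup\log\kappa(\theta)/\log\theta$, whereas the paper never touches $\kappa_Z$ asymptotics: it maps $V$ to natural scale and applies \cite[Corollary 9.8]{subordinators} for the zero set of a natural-scale diffusion, so only Karamata theory is needed. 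Your route is viable in principle (via the Krein correspondence and Kasahara's Tauberian theorem), but as written it contains a concrete error. The local asymptotics of $s_V$ and $m_V$ at $0$ are interchanged: from \eqref{scalefunctionV}--\eqref{speedmeasureV}, writing $\Psi=-\lambda+\Psi_0$ with $\Psi_0(u)\to 0$ and $L(z):=\exp\bigl(-\frac{2}{c}\int_z^{x_0}\frac{\Psi_0(u)}{u}\ddr u\bigr)$ slowly varying at $0$ (Karamata's representation theorem, which also disposes of the ``obstacle'' you flag at the end), one gets
\[
s_V(x)\asymp x^{2\lambda/c-1}/L(x),\qquad m_V(x)\asymp x^{-2\lambda/c}L(x),
\]
hence $S_V(0,x]\asymp x^{2\lambda/c}/L(x)$ and $M_V(0,x]\asymp x^{1-2\lambda/c}L(x)$; the exponent in $e^{-\int_{x_0}^{x}\frac{2\Psi(u)}{cu}\ddr u}$ is $+\frac{2\lambda}{c}\log x+\cdots$, not the sign you took. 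Your claims $s_V(x)\asymp x^{-1-2\lambda/c}$ and $m_V(x)\asymp x^{2\lambda/c-1}$ are the reciprocal pair: they would force $S_V(0,x]=\infty$, i.e.\ $0$ inaccessible for $V$, contradicting the very regular reflecting boundary (and local time $L^V$) your argument rests on; and fed into the Krein--Kasahara machinery they would produce the dual index $1-2\lambda/c$, not $2\lambda/c$. With the correct asymptotics, the speed measure of $V$ in natural scale is regularly varying at $0$ with index $\frac{c}{2\lambda}-1$ (for $\lambda>0$), and Kasahara's theorem then does give $\kappa_Z$ regularly varying with index $2\lambda/c$. Your bookkeeping of hypotheses is likewise crossed: $S_V(0,x]<\infty$ is equivalent to $\mathcal{E}<\infty$, it is $M_V(0,x]<\infty$ that needs $2\lambda/c<1$, and $S_Z(0)=S_V(\infty)$ in Theorem \ref{theorem3} concerns the boundary $\infty$ (positivity of $\Psi$), not the condition $2\lambda/c<1$.

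Two further repairs are needed. First, the Laplace-exponent formula you propose, $\kappa_Z(\theta)=\lim_{x\to 0+}(h^+_\theta)'(x)/\bigl(s_V(x)h^+_\theta(0)\bigr)$ with $h^+_\theta$ subject to the reflecting condition $\lim_{x\to 0+}(h^+_\theta)'(x)/s_V(x)=0$, is identically zero by that very condition; the correct expression uses the decreasing solution, $\kappa_Z(\theta)=-\bigl((h^-_\theta)'(0+)/s_V(0+)\bigr)/h^-_\theta(0)$ up to the normalisation of the local time, equivalently $\kappa_Z(\theta)\propto 1/G_\theta(0,0)$ with $G_\theta$ the resolvent density of the reflected $V$. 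Second, the case $\lambda=0$ is not covered by your argument: there $S_V$ is slowly varying at $0$, nothing is regularly varying with positive index, and you must show separately that $\limsup_{\theta\to\infty}\log\kappa_Z(\theta)/\log\theta=0$. The paper's proof handles precisely this case through the rapid variation at $0$ of $S_V^{-1}$ (\cite[Theorem 2.4.7]{Bingham87}); some such substitute is indispensable in your approach to conclude that both dimensions vanish when $\Psi(0)=0$.
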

\begin{remark} The dimension is zero for all branching mechanisms $\Psi$ such that $\Psi(0)=-\lambda=0$. The equality of the packing and Hausdorff dimensions ensures that the Laplace exponent $\kappa_Z$ has the same lower and upper Blumenthal-Getoor's indices, see Bertoin \cite[Page 41]{subordinators}.
\end{remark}
\begin{example}\label{example} 
\begin{enumerate}
\item A first example is given by the case $\Psi\equiv -\lambda$ with $\lambda>0$. The LCSBP $Z$ is degenerated into a process\footnote{This example is in fact a disguised diffusion; since one can interpret the jump to infinity as a killing term, see  \cite[Chapter 2, Section 6]{MR1912205}.} which decays along the deterministic drift $-\frac{c}{2}Z_t^2\ddr t$ when lying in $(0,\infty)$ and jumps from any $z\in (0,\infty)$ to $\infty$ at rate $\lambda z$. According to Theorem~\ref{backgroundtheorem}-ii), if $2\lambda/c\geq 1$ then the boundary $\infty$ of $Z$ is an exit and if $2\lambda/c<1$, it is a regular reflecting boundary. The bidual process $V$ is the solution to the SDE
\[\ddr V_t=\sqrt{cV_t}\ddr B_t+(c/2-\lambda)\ddr t.\]
Therefore, $V$ is a squared Bessel diffusion with dimension $\delta:=c/2-\lambda\geq0$, or equivalently a CSBP with immigration with mechanisms $(\psi,\phi)$ where $\psi(q)=\frac{c}{2}q^2$ and $\phi(q)=\delta q$. According for instance to  Foucart and Uribe Bravo \cite[Proposition 13]{MR3263091}, the inverse local time at $0$ of $V$ is a stable subordinator with index $2\lambda/c$: \begin{center} $\kappa_V(\theta)=\theta^{\frac{2\lambda}{c}}$ for all $\theta\geq 0$.\end{center} By Theorem \ref{theorem3}, the inverse local time of $Z$ at $\infty$ is also stable with the same index, and the Hausdorff dimension of $\mathcal{I}$ is $2\lambda/c\in (0,1)$.
\item A simple example of LCSBP with $\infty$ reflecting which gets extinct almost surely is the LCSBP with $\Psi(x)=-\lambda+(\alpha-1)x^\alpha$ for all $x\geq 0$, with $d>0$, $\alpha\in (1,2]$. In this case the branching part of the process behaves as a critical stable one before the first jump to $\infty$. When $0<2\lambda/c<1$, the process may visit $\infty$ but $\kappa_Z(0)>0$ and the process gets extinct almost-surely in finite time. The bidual process is the diffusion reflected at $0$ (if $0<2\lambda/c<1$) which is the solution to
\[\ddr V_t=\sqrt{cV_t}\ddr B_t+\big(c/2-\lambda+(\alpha-1)V_t^{\alpha}\big)\ddr t.\]
\item Examples of LCSBPs with $\infty$ regular reflecting and  $\lambda=0$ are provided by certain branching mechanisms with slowly varying property at $0$, see \cite[Example 3.14]{MR3940763}. For instance if $\pi_{|(e,\infty)}(\ddr u)=\frac{\alpha}{u(\log u)^2}\ddr u$ and $2\alpha/c<1$ then  the Tauberian and monotone density theorems, Bingham et al. \cite[Theorem 1.7 and 1.7.2]{Bingham87}, give $\Psi(x)\underset{x\rightarrow 0+}{\sim} -\alpha/\log(1/x)$. One has $\mathcal{E}<\infty$ and by Theorem \ref{Hausdorff}, $\mathrm{dim}_H(\mathcal{I})=0$ a.s..
\end{enumerate}
\end{example}
The LCSBP process $Z$ and its bidual $V$ will also have their long-term regimes closely linked. In fact, \eqref{joiningduals} ensures that the existence of  a limiting distribution for $V$ necessarily entails one for $Z$. When $-\Psi$ is the Laplace exponent of a subordinator, the LCSBP can be positive recurrent or null recurrent, see \cite[Theorem 3.7]{MR3940763} for necessary and sufficient conditions. The LCSBP in Example \ref{example}-(1) for instance is null recurrent. We provide more details in the next theorem. 

By It\^o's theory of excursions, since $Z$ and $V$ are Feller processes with boundary $\infty$ and $0$ regular reflecting, their trajectories can be decomposed into excursions out from their boundary $\infty$ and $0$ respectively, see for instance \cite[Chapter 4, Section 4]{Bertoin96}. The process
$(e_t,t\leq L^Z_\infty)$ defined by setting for all $t>0$,
\[e_t=\left(Z_{s+\tau_{t-}^{Z}}, 0\leq s< \tau_{t}^{Z}-\tau_{t-}^{Z}\right) \text{ if }\tau_{t}^{Z}-\tau_{t-}^{Z}>0 \text{ and } e_t=\partial \text{ an isolated point, otherwise,}\]
is a Poisson point process on the set of c\`adl\`ag excursions out from $\infty$, stopped at the first infinite excursion, with for $\sigma$-finite intensity measure the excursion measure, say, $n_Z$. We denote an excursion of $Z$ by $\epsilon: (\epsilon(t),t\leq \zeta)$ with $\zeta$ its length. 
Similarly, the diffusion $V$ with $0$ regular reflecting has an excursion measure $n_V$ on the set of continuous excursions out of $0$. We shall denote an excursion of $V$ by $\omega:(\omega(t),t\leq\ell)$, with $\ell$ its length. Both boundaries $\infty$ and $0$ being regular reflecting, they are also instantaneous (i.e. they are not holding points, see e.g. \cite[Chapter IV, page 104]{Bertoin96}). Since they are moreover regular for themselves, the excursion measures $n_Z$ and $n_V$ are infinite.

The next two results are initiating the study of the excursion measure of $Z$. The first states a relationship between the excursion measures of $Z$ and $V$, the second provides some information about the law of the infimum of an excursion under $n_Z$ for LCSBPs that converge towards $0$ almost surely.

\begin{theorem}\label{theoremdualinsiden} Assume $\infty$ regular reflecting ($\mathcal{E}<\infty \text{ and } 2\lambda/c<1$).  One has the following identity: for all $x\in [0,\infty)$ and $q>0$,
\begin{equation}\label{dualityexcursionmeasure}
n_Z\left(\int_{0}^{\zeta}e^{-qu}e^{-x\epsilon(u)}\ddr u\right)=n_V\left(\int_{0}^{\ell}e^{-qu}\mathbbm{1}_{(x,\infty)}(\omega(u))\ddr u\right).
\end{equation}
Moreover
\begin{equation}\label{LTstationary} n_Z\left(\int_{0}^{\zeta}e^{-x\epsilon(u)}\ddr u\right)=\int_x^{\infty}e^{\int_{1}^{y}\frac{2\Psi(u)}{cu}\ddr u}\ddr y \in (0,\infty], \quad x\in [0,\infty).
\end{equation}
The integral at the right hand side in \eqref{LTstationary} is finite for some $x>0$ if and only if  $-\Psi$ is the Laplace exponent of a subordinator and at least one of the following condition holds
\begin{equation}\label{condposrec}
\underset{u\rightarrow \infty}{\lim}\frac{\Psi(u)}{u}:=-\delta<0, \pi((0,1))=\infty, \bar{\pi}(0)+\lambda>\frac{c}{2}
\end{equation}
where $\delta$ is the drift of $-\Psi$ and $\bar{\pi}(0)$ the total mass of the Lévy measure. 
In this case, the Lévy measure of $\tau_Z$, $n_Z(\zeta\in \ddr h)$, has a finite first moment, which satisfies $$n_Z(\zeta)=\int_0^{\infty}e^{\int_{1}^{y}\frac{2\Psi(u)}{cu}\ddr u}\ddr y<\infty.$$
\end{theorem}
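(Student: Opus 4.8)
The plan is to obtain the excursion duality \eqref{dualityexcursionmeasure} by combining It\^o's excursion theory for the two reflected Feller processes with the boundary entrance-law duality \eqref{joiningdualsatboundary}, and then to read \eqref{LTstationary} off from the occupation-density (speed measure) description of the excursion measure of the diffusion $V$.

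First I would write down, for each reflected process, the resolvent--excursion identity coming from the master formula of excursion theory, see e.g. \cite[Chapter IV]{Bertoin96}. Since $\infty$ is instantaneous, regular reflecting and regular for itself for $Z$, the inverse local time $\tau^Z$ is a driftless (possibly killed) subordinator with Laplace exponent $\kappa_Z$, and for every $q>0$ and every $f$ vanishing at $\infty$,
\[\mathbb{E}_\infty\left(\int_0^\infty e^{-qt}f(Z_t)\,\ddr t\right)=\frac{1}{\kappa_Z(q)}\,n_Z\left(\int_0^\zeta e^{-qu}f(\epsilon(u))\,\ddr u\right),\]
the compensation formula accommodating both finite and infinite excursions. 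The symmetric identity holds for $V$ reflected at $0$, with $\kappa_V$ and $n_V$. Applying the first to $f=e^{-x\,\cdot}$ and the second to $g=\mathbbm{1}_{(x,\infty)}$ (both vanishing at the respective boundary for $x>0$), the two $n$-terms appearing are exactly the two sides of \eqref{dualityexcursionmeasure}.

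To match them I would Laplace-transform \eqref{joiningdualsatboundary} in time, which gives $\int_0^\infty e^{-qt}\mathbb{E}_\infty(e^{-xZ_t})\,\ddr t=\int_0^\infty e^{-qt}\mathbb{P}_0(V_t>x)\,\ddr t$; hence the two left-hand sides of the resolvent identities coincide. Theorem \ref{theorem3} gives $L^Z\deq L^V$, so $\kappa_Z=\kappa_V$, the common prefactor cancels, and \eqref{dualityexcursionmeasure} follows for $x>0$, the case $x=0$ being recovered by letting $x\downarrow 0$. For \eqref{LTstationary} I then let $q\downarrow 0$ in \eqref{dualityexcursionmeasure}; monotone convergence yields $n_Z(\int_0^\zeta e^{-x\epsilon(u)}\ddr u)=n_V(\int_0^\ell \mathbbm{1}_{(x,\infty)}(\omega(u))\ddr u)$. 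The right-hand side is the expected occupation time of $(x,\infty)$ under the excursion measure of the diffusion $V$ away from the regular reflecting boundary $0$; by the classical identification of this occupation measure with the speed measure (for the diffusion-local-time normalisation, the same as in Theorem \ref{theorem3}), see e.g. \cite[Chapter II]{MR1912205}, it equals $M_V((x,\infty))$. Since $\sigma^2(x)=cx$ and $s_V(x)=\frac{1}{cx}e^{-\int_{x_0}^x\frac{2\Psi(u)}{cu}\ddr u}$ for $V$, the speed density is $m_V(x)=1/(\sigma^2(x)s_V(x))=e^{\int_{x_0}^x\frac{2\Psi(u)}{cu}\ddr u}$, whence $M_V((x,\infty))=\int_x^\infty e^{\int_{x_0}^y\frac{2\Psi(u)}{cu}\ddr u}\ddr y$, proving \eqref{LTstationary}; letting $x\downarrow 0$ gives $n_Z(\zeta)=M_V((0,\infty))$.

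It remains to decide when this is finite. As $0$ is regular reflecting for $V$, $M_V$ is finite near $0$, so $M_V((x,\infty))<\infty$ for some $x>0$ is equivalent to $M_V((0,\infty))<\infty$, i.e. to positive recurrence of $V$, equivalently of $Z$; this is precisely the dichotomy of \cite[Theorem 3.7]{MR3940763}, which forces $-\Psi$ to be the Laplace exponent of a subordinator and one of the conditions \eqref{condposrec}, the converse being an asymptotic analysis of $\int_{x_0}^y\frac{2\Psi(u)}{cu}\ddr u$ at $\infty$ ensuring integrability of $e^{\int}$. Finally $n_Z(\zeta\in\ddr h)$ is the L\'evy measure of the driftless subordinator $\tau^Z$, so finiteness of $n_Z(\zeta)=\int_0^\infty h\,n_Z(\zeta\in\ddr h)=M_V((0,\infty))$ is exactly the finiteness of its first moment. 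I expect the main obstacle to be the normalisation bookkeeping in the first two steps: one must ensure that the local-time normalisations hidden in $\kappa_Z,\kappa_V,n_Z,n_V$ are mutually compatible so that the $\kappa$-factors cancel genuinely, and that the $n_V$-occupation measure is exactly $M_V$ rather than a scalar multiple; this compatibility is what the identity $L^Z\deq L^V$ of Theorem \ref{theorem3} provides.
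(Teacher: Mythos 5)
Your proposal is correct and follows essentially the same route as the paper: the resolvent--excursion identity from \cite[Chapter IV]{Bertoin96} for both reflected processes, the matching of the two resolvents via the time-Laplace transform of \eqref{joiningdualsatboundary} (which is exactly the paper's identity \eqref{dualresolventinfinity}), the cancellation of $\kappa_Z=\kappa_V$ from Theorem \ref{theorem3}, monotone convergence as $q\downarrow 0$, identification of the $n_V$-occupation measure with the speed measure $M_V$, and the finiteness dichotomy deferred to \cite{MR3940763}. Your explicit attention to the local-time normalisation is if anything slightly more careful than the paper, which carries an unspecified constant $C$ from \cite[Chapter XIX.46]{Dellacherieetal} through its proof.
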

\begin{remark}
The process $Z$ is positive recurrent if and only if $n_Z(\zeta)<\infty$, see the end of \cite[Chapter 2]{subordinators}.  The conditions for $n_Z(\zeta)<\infty$ match therefore with those for positive recurrence found in \cite[Theorem 3.7]{MR3940763}. See also Remark 3.8 in there. Moreover in case of $n_Z(\zeta)<\infty$, if one renormalises \eqref{LTstationary} by $n_Z(\zeta)$, we recover the Laplace transform of the stationary distribution of $Z$.  This is a consequence of a general result representing the stationary distribution through the excursion measure, see Dellacherie et al. \cite[Chapter XIX.46]{Dellacherieetal}.
\end{remark}

\begin{theorem}\label{infimuminexcursion} Assume $\infty$ regular reflecting ($\mathcal{E}<\infty \text{ and } 2\lambda/c<1$) and that $-\Psi$ is not the Laplace exponent of a subordinator. Denote by $I$ the infimum of an excursion of $Z$. For all $a,b\in (0,\infty),$ \[S_Z(a)n_Z\big(I\leq a\big)=S_Z(b)n_Z\big(I\leq b\big)\]
with \begin{equation}\label{eqS_Z}
S_Z(a):=\int_{0}^{\infty}\frac{\ddr x}{x} e^{-ax}e^{-\int_{1}^{x}\frac{2\Psi(u)}{cu}\ddr u}, \ a\in [0,\infty).
\end{equation}

\end{theorem}
\section{A remark on the case without competition}\label{remarkwithoutcompetition}
We comment on Theorems~\ref{theorem1} and~\ref{theorem2} in the case without competition. Recall that $\infty$ in this case is absorbing when accessible. The Laplace transforms of the times of extinction  and explosion are easily derived from the branching property (and its consequences namely Equations~\eqref{cumulant} and~ \eqref{cumulantu}). We explain here the role of Siegmund duality in the case $c=0$. 

Recall that when there is no competition, Equation \eqref{cumulant} states that the Laplace dual process started at $x$ of the CSBP $(Y_t,t\geq 0)$ started at $z$ is the deterministic map $(u_t(x),t\geq 0)$ solution to \eqref{cumulantu}. Moreover, since $0$ and $\infty$ are both absorbing for $Y$, by letting $x$ go to $\infty$ and to $0$ in \eqref{cumulant}, we get
\[\mathbb{P}_z(\zeta^Y_0\leq t)=e^{-zu_t(\infty)} \text{ and } \mathbb{P}_z(\zeta^Y_\infty>t)=e^{-zu_t(0+)},\]
where we denote by $\zeta_0^{Y}$ and $\zeta_\infty^Y$ the extinction and explosion time of $Y$. We assume now that Grey's and Dynkin's conditions \eqref{GreyDynkincond} are satisfied so that both times $\zeta_0^Y$ and $\zeta_\infty^Y$ are finite with positive probability and look for their Laplace transforms.

Let $\rho$ be the largest root of $\Psi$, $\rho:=\sup\{q>0: \Psi(q)\leq 0\}\geq 0$. Note that, for all $q\in [0,\infty]$, $\underset{t\rightarrow \infty}{\lim} u_t(q)= \rho$. By the change of variable $x=u_t(\infty)$, using the fact that $t=\int_{x}^{\infty}\frac{\ddr u}{\Psi(u)}$, and performing an integration by parts, we see  that for any $z\in (0,\infty)$ and $\theta>0$,
\begin{align}
\label{LTextinctionY}\mathbb{E}_z[e^{-\theta \zeta^Y_0}]=\mathbb{P}_z(\zeta^Y	_0\leq \mathbbm{e}_{\theta})&=\int_{0}^{\infty}\theta e^{-\theta t}e^{-zu_t(\infty)}\ddr t= \int_\rho^{\infty}\theta e^{-xz-\theta\int_{x}^{\infty}\frac{\ddr u}{\Psi(u)}}\frac{\ddr x}{\Psi(x)} \nonumber\\
&=\left[e^{-\theta \int_x^{\infty}\frac{\ddr u}{\Psi(u)}}e^{-xz}\right]_{x=\rho}^{x=\infty}+\int_\rho^{\infty}ze^{-xz-\theta\int_{x}^{\infty}\frac{\ddr u}{\Psi(u)}}\ddr x\nonumber\\
&=\int_\rho^{\infty}ze^{-xz-\theta\int_{x}^{\infty}\frac{\ddr u}{\Psi(u)}}\ddr x,
\end{align}
where in the penultimate equality, the bracket term is vanishing at $x=\infty$ and at $x=\rho$. For the limit at $\rho$, note indeed that $\Psi(\rho)=0$, hence $\Psi(u)\underset{u\rightarrow \rho}{\sim} \psi'(\rho)(u-\rho)$ and then $\int_\rho \frac{\ddr u}{\Psi(u)}=+\infty$.

Similarly with $x=u_t(0+)$, using that $t=\int_{0}^{x}\frac{\ddr u}{-\Psi(u)}$, we get
\begin{equation}\label{LTexplosionY}
\mathbb{E}_z[e^{-\theta \zeta^Y_\infty}]=\mathbb{P}_z(\zeta^Y_\infty\leq \mathbbm{e}_{\theta})=\int_{0}^{\infty}\theta e^{-\theta t}(1-e^{-zu_t(0+)})\ddr t=\int_0^\rho ze^{-xz-\theta\int_{0}^{x}\frac{\ddr u}{-\Psi(u)}}\ddr x.
\end{equation} 
To understand the connection with the case including competition, observe that the Siegmund dual of $(u_t(x),t\geq 0)$ is simply its inverse flow, defined by $$v_t(y):=\inf\{z\geq 0: u_{t}(z)>y\},$$ which is solution to the equation $ \frac{\ddr}{\ddr t} v_t(y)=\Psi\big(v_t(y)\big), v_0=y$. When $c=0$, Equation \eqref{eigenvalueG}  reduces to a first-order differential equation exhibiting a singularity at $\rho$ when $\rho\in (0,\infty)$. In this case, explicit expressions for the solutions can be obtained. For any fixed  $x_0\in (\rho,\infty)$, the increasing solution on $(x_0,\infty)$ takes the form $h_\theta^{+}(x)=e^{\theta\int_{x_0}^{x}\frac{\ddr u}{\Psi(u)}}$ for any $x>x_0$. Similarly, the decreasing solution on any interval $(0,x_1)$ with $x_1<\rho$ is given by $h_\theta^{-}(x)=e^{\theta\int_{0}^{x}\frac{\ddr u}{\Psi(u)}}$ for any $x<x_1$. 

By considering the solutions $h_\theta^{-}$ and $h_\theta^{+}$ on their maximal interval, we recover the expressions
\[\mathbb{E}_z[e^{-\theta \zeta^Y_0}]=z\int_\rho^{\infty}e^{-xz}\frac{h_\theta^{+}(x)}{h_\theta^{+}(\infty)}\ddr x \text{ and } \mathbb{E}_z[e^{-\theta \zeta^Y_\infty}]=z\int_0^\rho e^{-xz}\frac{h_\theta^{-}(x)}{h_\theta^{-}(0)}\ddr x.\]
Note that $h_\theta^{+}(\infty)< \infty$ if and only if $\int^{\infty}\frac{\ddr x}{\Psi(x)}<\infty$ (Grey's condition for extinction) and $h_\theta^{-}(0)< \infty$ if and only if $\int_{0}\frac{\ddr x}{-\Psi(x)}<\infty$ (Dynkin's condition for explosion). 

When $\rho=0$ (equivalently $\Psi'(0+)\geq 0$) or $\rho=\infty$ (equivalently $\Psi'(\infty):=\underset{x\rightarrow \infty}{\lim}\frac{\Psi(x)}{x}\leq 0$), one can reinterpret \eqref{LTextinctionY} and \eqref{LTexplosionY} in terms of the identities in law
\[\zeta_0^{Y}\overset{\text{law}}{=}t_{\infty}^{\mathbbm{e}_z}, \text{ if } \rho=0 \text{ and }
\zeta_\infty^{Y}\overset{\text{law}}{=}t_{0}^{\mathbbm{e}_z}\text{ if } \rho=\infty, \]
where \[t_\infty^y=\int_y^{\infty}\frac{\ddr u}{\Psi(u)}=\inf\{t>0: v_t(y)=\infty\} \text{ and } t_0^y=\int_0^y\frac{\ddr u}{-\Psi(u)}=\inf\{t>0: v_t(y)=0\}.\]

\section{Proofs of the main results}\label{proofsec}
Recall the Laplace duality relationships  between $Z$ and $U$: \begin{equation*}
\mathscr{L}e_x(z)\overset{\eqref{dualityLA}}{=}\mathscr{A}e_z(x), \  \forall x,z \in (0,\infty) 
\end{equation*}
and 
\begin{equation*} \mathbb{E}_z[e^{-xZ_t}]\overset{\eqref{duality1}}{=}\mathbb{E}_x[e^{-zU_t}], \ \forall x\in (0,\infty), z\in [0,\infty], t\geq 0.
\end{equation*} 
\subsection{Proofs of Proposition~\ref{propUVZ} and the key identities ~\eqref{joiningduals}, \eqref{joiningdualsatboundary}}\label{section:keyidentities}
We start by identifying in law the process $V$ in Siegmund duality with $U$, namely the process $V$ satisfying
\begin{equation*}
\mathbb{P}_y(x< V_t)\overset{\eqref{duality2}}{=}\mathbb{P}_x(U_t< y), \ x,y\in (0,\infty), t\geq 0.
\end{equation*} 
\textbf{Proof of Proposition~\ref{propUVZ}}. Notice that any branching mechanism $\Psi$ belongs to $C^1(0,\infty)$. This is then a direct application of Theorem \ref{CoxRoesler} with $\frac{1}{2}\sigma^2(x)=\frac{c}{2}x$ and $\mu(x)=-\Psi(x)$. By \eqref{generatorV}, $V$ has generator $$\mathscr{G}f(x)=\frac{c}{2}xf''(x)+\left(\frac{c}{2}+\Psi(x)\right)f'(x).$$ Furthermore,  when $U$  has $0$ regular absorbing, $V$ has  $0$ regular reflecting, see Table \ref{correspondanceSiegmund}. \qed 
\\

The coefficients of the diffusions $U$ and $V$ being smooth on $(0,\infty)$, the laws of $U_t$ and $V_t$ have no atom in $(0,\infty)$ when $t>0$ and \eqref{duality2} holds true with large inequalities.  We shall now exploit the two dualities \eqref{duality1} and \eqref{duality2}. Let $\mathbbm{e}$ be an exponentially distributed random variable with parameter $1$ independent of everything else. For any $q>0$, we denote by $\mathbbm{e}_q:=\mathbbm{e}/q$. 
\\

\noindent \textbf{Proofs of identities \eqref{joiningduals} and \eqref{joiningdualsatboundary}}. We now  link the semigroup of $Z$ to that of $V$. One has by Laplace duality \eqref{duality1} and then Siegmund duality \eqref{duality2},  for any $x,z\in (0,\infty)$, $t\geq 0$:
\begin{equation}\label{firstbidual}
\mathbb{E}_z[e^{-xZ_t}]=\mathbb{E}_x[e^{-zU_t}]=\mathbb{P}_x(\mathbbm{e}_z>U_t)=\int_{0}^{\infty}ze^{-zy}\mathbb{P}_y(V_t>x)\ddr y.\end{equation}

By letting $z$ go to $\infty$, we get the following key identity \eqref{joiningdualsatboundary}
\begin{equation}\label{joiningdualsatboundary2}
\mathbb{E}_\infty(e^{-xZ_t})=\mathbb{P}_0(V_t>x) \text{ for } t,x\geq 0.
\end{equation} 
\qed

We now address the proofs of the main results. Recall the maps $h_\theta^{+}$ and $h_{\theta}^{-}$ and \eqref{hittingtimeV}. 
\subsection{Proof of Theorem \ref{theorem1}}\label{prooftheorem1}
By letting $x$ go to $\infty$, and recalling that $\infty$ is an absorbing boundary for the process $V$, see Table \ref{correspondanceUV}, we get for all $t\geq 0$
\[\mathbb{P}_z(\zeta_0\leq t)=\underset{x\rightarrow\infty}{\lim}\mathbb{E}_z[e^{-xZ_t}]=\int_{0}^{\infty}ze^{-zy}\mathbb{P}_y(V_t=\infty)\ddr y=\int_{0}^{\infty}ze^{-zy}\mathbb{P}_y(T_\infty\leq t)\ddr y.\]
Hence for any $\theta \in (0,\infty)$
\[\mathbb{E}_z[e^{-\theta \zeta_0}]=\mathbb{P}_z(\zeta_0\leq \mathbbm{e}_\theta)=\int_{0}^{\infty}ze^{-zy}\mathbb{P}_y(T_\infty\leq \mathbbm{e}_\theta)\ddr y=\int_{0}^{\infty}ze^{-zy}\mathbb{E}_y\left[e^{-\theta T_\infty}\right]\ddr y.\]
The form in \eqref{extinctiontimeZ} is provided by the identity for diffusions \eqref{hittingtimeV}: $\mathbb{E}_y\left[e^{-\theta T_\infty}\right]=\frac{h_\theta^{+}(y)}{h_\theta^{+}(\infty)}$. We now study $\mathbb{E}_z(\zeta_0)$  under the assumption of non explosion $\mathcal{E}=\infty$. Recall also the assumption $\int^{\infty}\frac{\ddr u}{\Psi(u)}<\infty$, which entails $\zeta_0<\infty$ a.s., see Theorem \ref{backgroundtheorem}-ii). Note that this entails that $0$ is non-attracting for $V$ (from Table \ref{correspondanceUV}, $0$ is actually an entrance) and $\infty$ is an exit for $V$. We need to compute $\mathbb{E}(T_\infty^{\mathbbm{e}_z})$. The calculation is a bit cumbersome but follows from a general result of diffusions, see \cite[Equation (6.6), page 227]{zbMATH03736679}. Recall that $c/2$ is the competition parameter, see \eqref{genLCSBP}. Let $s_V$ be the derivative of the scale function and $m_V$ the speed density measure of $V$ see \eqref{scalefunction} and \eqref{speedmeasure}, with $x_0=1$. One has 
\begin{align}\label{scalefunctionV}
S'_V(y)&=s_V(y)=\frac{1}{y}e^{-\int_{1}^{y}\frac{2\Psi(u)}{cu}\ddr u},
\end{align}
\begin{align}\label{speedmeasureV}
M'_V(y)&=m_V(y)
=\frac{1}{c}e^{\int_{1}^{y}\frac{2\Psi(u)}{cu}\ddr u}.
\end{align}
Then, for any $a>0$,
\begin{equation*}
\mathbb{E}_x(T_a\wedge T_\infty)=2\frac{S_V[a,x]}{S_V[a,\infty]}\int_x^{\infty}S_V(\eta,\infty]\ddr M_V(\eta)+2\frac{S_V[x,\infty]}{S_V[a,\infty]}\int_a^xS_V(a,\eta]\ddr M_V(\eta),
\end{equation*}
and since $S_V(0,x]=\underset{a\rightarrow 0+}{\lim} S_V(a,x]= \infty$ and $S_V[x,\infty]<\infty$,
we see that by letting $a$ go to $0$,  
\[\frac{S_V[a,x]}{S_V[a,\infty]}\underset{a\rightarrow 0}{\longrightarrow} 1 \text{ and } \frac{S_V[x,\infty]}{S_V[a,\infty]}\underset{a\rightarrow 0}{\longrightarrow} 0.\]
Thus
\begin{align*}
\mathbb{E}_\eta(T_\infty)&=2\int_{\eta}^{\infty}S_V(v,\infty]m_V(v)\ddr v
=\int_{\eta}^{\infty}\ddr v\int_v^\infty \frac{2}{cx}e^{-\int_{v}^{x}\frac{2\Psi(u)}{cu}\ddr u}\ddr x.
\end{align*}
Set $Q(x):=\int_{1}^{x}\frac{2\Psi(u)}{cu}\ddr u$ for any $x>0$. We obtain
\begin{align*}
\mathbb{E}_z(\zeta_0)=\int_0^\infty ze^{-z\eta} \mathbb{E}_\eta(T_\infty)\ddr \eta&=\int_0^\infty ze^{-z\eta}\ddr \eta \int_{\eta}^{\infty}\ddr v\int_v^\infty \frac{2}{cx}e^{-\int_{v}^{x}\frac{2\Psi(u)}{cu}\ddr u}\ddr x\\
&=\int_{0}^{\infty}\ddr v(1-e^{-zv})\int_v^{\infty}\frac{2}{cx}e^{-\int_{v}^{x}\frac{2\Psi(u)}{cu}\ddr u}\ddr x\\
&=\int_{0}^{\infty}\ddr x\frac{2}{cx}e^{-Q(x)}\int_0^x(1-e^{-zv})e^{Q(v)}\ddr v, 
\end{align*}
where in the last two equalities we have applied Fubini-Tonelli's theorem. 
\\

We now study the finiteness of $\mathbb{E}_z(\zeta_0)$. First, recall that by assumption $\mathcal{E}=\infty$ and therefore that the process $Z$ has its boundary $\infty$ as entrance, see Table \ref{correspondanceVZ}. According to \cite[Lemma 5.4]{MR3940763}, for any $z\in (0,\infty)$, $\mathbb{E}_\infty(\zeta_z)<\infty$ where we denote by $\zeta_z$ the first passage time below $z$. By the strong Markov property and the fact that there is no negative jumps in $Z$, we see that $\mathbb{E}_\infty(\zeta_0)=\mathbb{E}_\infty(\zeta_z)+\mathbb{E}_z(\zeta_0)$. We can therefore focus on $\mathbb{E}_\infty(\zeta_0)$ in order to see whether $\mathbb{E}_z(\zeta_0)<\infty$ or not. By applying Fubini-Tonelli's theorem, we get
\begin{align}\label{twoparts}
\mathbb{E}_\infty(\zeta_0)&=\int_{0}^{\infty}\ddr \eta\int_\eta^\infty\ddr x\frac{2}{cx}e^{-Q(x)}e^{Q(\eta)}\nonumber\\
&=\int_{0}^{1}\ddr \eta\int_\eta^\infty\ddr x\frac{2}{cx}e^{-Q(x)}e^{Q(\eta)}+\int_{1}^{\infty}\ddr \eta\int_\eta^\infty\ddr x\frac{2}{cx}e^{-Q(x)}e^{Q(\eta)}\nonumber \\
&=:I+J. 
\end{align} 
It is established  in \cite[Equation~(7.4), page 31]{MR3940763}, together with the subsequent calculations therein, that $J\leq \int_{1}^{\infty}\frac{\ddr z}{\Psi(z)}<\infty$.  We thus have $\mathbb{E}_z(\zeta_0)<\infty$ if and only if the first integral $I$ in \eqref{twoparts} is finite. 
Furthermore, 
\begin{align*}I&=\int_{0}^{1}\ddr \eta\int_\eta^{1}\ddr x\frac{2}{cx}e^{-Q(x)}e^{Q(\eta)}+\int_0^{1}\ddr \eta\int_{1}^\infty\ddr x\frac{2}{cx}e^{-Q(x)}e^{Q(\eta)}\\
&=I_1+I_2.
\end{align*}
We argue now that $I_2$ is always finite. Since $J<\infty$, we also have $C:=\int_{1}^\infty\ddr x\frac{2}{cx}e^{-Q(x)}<\infty$ and $I_2=C\int_0^{1}e^{Q(\eta)}\ddr \eta$. The calculation in \cite[Equation (7.6), page 32]{MR3940763}  ensures that $\int_0^{1}e^{Q(\eta)}\ddr \eta<~\infty$. Therefore $\mathbb{E}_z(\zeta_0)<\infty$ if and only if $I_1<\infty$. A last application of Fubini-Tonelli's theorem shows that $I_1<\infty$ is equivalent to $\int_{0}^{1}\frac{\ddr x}{x}e^{-Q(x)}\int_{0}^{x}e^{Q(\eta)}\ddr \eta<\infty.$ \qed
\subsection{Proof of Theorem \ref{theorem2}}
Recall $\mathscr{L}$ the generator of the LCSBP $Z$ given in \eqref{genLCSBP}, $h^-_{\theta}$ in \eqref{hittingtimeV} and the facts that when $\mathcal{E}<\infty$, the boundary $0$ is accessible for the bidual process $V$ and  $h_\theta^{-}(0)<\infty$, see \eqref{hittingtimeV0}. Moreover, we recall that $\infty$ is either natural or exit for $V$ and $h_\theta^{-}(\infty)=0$ for any $\theta>0$, see \eqref{hthetainfty0}.
\begin{lemma}\label{invariantfunctionftheta} 
Let $\theta>0$. Assume $\mathcal{E}<\infty$, then the following function 
\begin{equation}\label{fthetaexplicit}
f_\theta^{+}(z):=
\int_{(0,\infty)}ze^{-xz}h^-_{\theta}(x)\ddr x, \ z\in [0,\infty]
\end{equation}
is a well-defined continuous bounded non-decreasing function satisfying $f_\theta^{+}(0)=0$ and
\begin{center}
$f_\theta^{+}(\infty)=h_\theta^{-}(0)<\infty$. 
\end{center}
Moreover, $f_\theta^+\in C^2(0,\infty)$ and for all $z\in (0,\infty)$,
\begin{equation}\label{harmonic}
\mathscr{L}f_\theta^{+}(z)=\theta f_\theta^{+}(z).
\end{equation}
\end{lemma}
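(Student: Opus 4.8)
The plan is to read off the analytic regularity of $f_\theta^+$ directly from its integral representation \eqref{fthetaexplicit}, and then to obtain the eigenvalue equation \eqref{harmonic} by pushing the generator $\mathscr{L}$ (acting in the $z$ variable) under the integral sign and transferring it, via the generator duality of Lemma \ref{lemmadualityLA}, onto the second-order equation $\mathscr{G}h_\theta^-=\theta h_\theta^-$ that $h_\theta^-$ solves by \eqref{hittingtimeV}.

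First I would dispose of the soft properties. Since $\mathcal{E}<\infty$, the boundary $0$ is accessible for $V$ (Table \ref{correspondanceUV}), so $h_\theta^-(0)=:h_0<\infty$; as $h_\theta^-$ is decreasing with $h_\theta^-(\infty)=0$ by \eqref{hthetainfty0}, one has $0\le h_\theta^-\le h_0$, whence $f_\theta^+(z)=\int_0^\infty ze^{-xz}h_\theta^-(x)\ddr x\in[0,h_0]$ is well defined and bounded, and differentiation under the integral (legitimate for $z>0$ because the $x$-integrals of $x^ke^{-xz}h_\theta^-(x)$ converge) gives $f_\theta^+\in C^\infty(0,\infty)$. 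An integration by parts using $ze^{-xz}=-\partial_x e^{-xz}$ and $h_\theta^-(\infty)=0$ yields
\[ f_\theta^+(z)=h_0+\int_0^\infty e^{-xz}(h_\theta^-)'(x)\,\ddr x, \]
from which $f_\theta^+(0+)=h_0+\big(h_\theta^-(\infty)-h_\theta^-(0)\big)=0$ and $f_\theta^+(\infty)=h_0$ follow by dominated convergence, while $\tfrac{\ddr}{\ddr z}f_\theta^+(z)=\int_0^\infty x e^{-xz}|(h_\theta^-)'(x)|\,\ddr x\ge0$ gives monotonicity.

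The heart is \eqref{harmonic}. The key pointwise identity is $\mathscr{L}_z[ze^{-xz}]=-\partial_x\mathscr{A}_x[e^{-xz}]$, which I would derive from Lemma \ref{lemmadualityLA} (i.e. $\mathscr{L}_z e^{-xz}=\mathscr{A}_x e^{-xz}$) by writing $ze^{-xz}=-\partial_x e^{-xz}$ and checking that each part of $\mathscr{L}_z$ (diffusive, drift, competition, jump and killing) commutes with $-\partial_x$ — a short computation, the jump part working out through a direct expansion and the killing term $\lambda z\big(g(\infty)-g(z)\big)$ being consistent because $g(\infty)=0$ for both $g=e^{-xz}$ and $g=ze^{-xz}$. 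Writing $\phi(x):=e^{-xz}$ so that $\mathscr{A}\phi=\big(\tfrac{c}{2}xz^2+z\Psi(x)\big)e^{-xz}$, two integrations by parts give
\[ \int_0^\infty \mathscr{L}_z[ze^{-xz}]\,h_\theta^-\,\ddr x=-\big[\mathscr{A}\phi\cdot h_\theta^-\big]_0^\infty+\Big[\tfrac{c}{2}x(h_\theta^-)'\phi'\Big]_0^\infty-\int_0^\infty \phi'\,\mathscr{G}h_\theta^-\,\ddr x. \]
The $\infty$-boundary terms vanish (exponential decay of $\phi$, together with $h_\theta^-(\infty)=0$ and at most quadratic growth of $\Psi$), the middle term vanishes because $x(h_\theta^-)'(x)\to0$ as $x\to0$ in each boundary regime of Table \ref{correspondanceUV} (read off from the indicial analysis of \eqref{eigenvalueG} near $0$), and since $\mathscr{A}\phi\to z\Psi(0)=-z\lambda$ as $x\to0$ the only surviving boundary contribution is $-z\lambda h_0$. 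Using $\mathscr{G}h_\theta^-=\theta h_\theta^-$ and $\phi'=-ze^{-xz}$, the last integral equals $-\theta f_\theta^+(z)$, so that $\int_0^\infty \mathscr{L}_z[ze^{-xz}]h_\theta^-\,\ddr x=-z\lambda h_0+\theta f_\theta^+(z)$.

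The delicate final step — and the main obstacle — is that $\mathscr{L}f_\theta^+$ is \emph{not} this integral: the killing term of $\mathscr{L}$ contributes $\lambda z\,f_\theta^+(\infty)$, and $f_\theta^+(\infty)=h_0$ \emph{cannot} be recovered by interchanging $z\to\infty$ with the integral, since $x\mapsto ze^{-xz}$ is an approximate identity concentrating at $0$. I would therefore split $\mathscr{L}=\tilde{\mathscr{L}}+\lambda z\,(\cdot)(\infty)$, where $\tilde{\mathscr{L}}$ carries no value-at-infinity term and hence does commute with $\int_0^\infty\cdot\,\ddr x$ (the jump part of $\tilde{\mathscr{L}}$ interchanged by Fubini, justified through $\int_0^\infty(1\wedge h^2)\pi(\ddr h)<\infty$ and the boundedness and $C^2$-regularity of $f_\theta^+$). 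This gives $\mathscr{L}f_\theta^+(z)=\int_0^\infty \mathscr{L}_z[ze^{-xz}]h_\theta^-\,\ddr x+\lambda z h_0=-z\lambda h_0+\theta f_\theta^+(z)+\lambda z h_0=\theta f_\theta^+(z)$, the nonvanishing integration-by-parts boundary term $-z\lambda h_0$ cancelling exactly the value-at-infinity correction $+z\lambda h_0$. It is precisely this bookkeeping of the two $\lambda$-contributions that must be carried out carefully; everything else is routine.
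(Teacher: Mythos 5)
Your proof is correct, and it reaches \eqref{harmonic} by a genuinely different decomposition than the paper's. The paper first rewrites \eqref{fthetaexplicit} by Fubini--Tonelli as $f_\theta^{+}(z)=\int_0^{\infty}(1-e^{-xz})(-h_\theta^{-})'(x)\,\ddr x$ (your identity $f_\theta^{+}(z)=h_\theta^{-}(0)+\int_0^\infty e^{-xz}(h_\theta^{-})'(x)\,\ddr x$ is the same statement), and then applies the generator duality of Lemma \ref{lemmadualityLA} to the \emph{bounded} kernel $z\mapsto 1-e^{-xz}$ integrated against the \emph{finite} measure $(-h_\theta^{-})'(x)\,\ddr x$: with that choice the nonlocal killing term commutes with the integral automatically, since $(1-e_x)(\infty)=1$ and $\int_0^\infty 1\cdot(-h_\theta^{-})'(x)\,\ddr x=h_\theta^{-}(0)=f_\theta^{+}(\infty)$, so a single integration by parts with vanishing boundary terms recombines everything into $\int_0^\infty ze^{-zx}\mathscr{G}h_\theta^{-}(x)\,\ddr x=\theta f_\theta^{+}(z)$, and no $\lambda$-term ever surfaces. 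You instead keep the kernel $ze^{-xz}$, integrate against $h_\theta^{-}(x)\,\ddr x$, and use the differentiated duality $\mathscr{L}_z[ze^{-xz}]=-\partial_x\mathscr{A}_x[e^{-xz}]$, which does not appear in the paper but which you justify correctly (the killing term is consistent precisely because both kernels vanish at $z=\infty$ for $x>0$); two integrations by parts then leave the nonvanishing boundary term $-\lambda z h_\theta^{-}(0)$ at $x=0$, coming from $\mathscr{A}\phi(x)\to z\Psi(0+)=-\lambda z$, and you cancel it against the explicitly separated value-at-infinity contribution $+\lambda z f_\theta^{+}(\infty)$ from the splitting $\mathscr{L}=\tilde{\mathscr{L}}+\lambda z\,(\cdot)(\infty)$. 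I checked your displayed identity and this cancellation: they are exact (indeed your signs are cleaner than the paper's own display, which contains compensating sign slips in its integration by parts but arrives at the same correct conclusion). In short, you carry out by hand the $\lambda$-bookkeeping that the paper's change of representation renders invisible; your version makes the mechanism transparent, the paper's is shorter and sidesteps the delicate interchange. One small point: for the vanishing of $\bigl[\tfrac{c}{2}x(h_\theta^{-})'\phi'\bigr]$ at $x=0$ you invoke an ``indicial analysis''; the cleaner argument (the paper's) is that a nonzero limit of $x(h_\theta^{-})'(x)$ would make $(h_\theta^{-})'$ non-integrable at $0$, forcing $h_\theta^{-}(0)=\infty$ and contradicting the accessibility of $0$ for $V$ under $\mathcal{E}<\infty$; the analogous integrability argument disposes of the term at $x=\infty$, where exponential decay alone does not a priori control possible spikes of $(h_\theta^{-})'$.
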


\begin{proof}
By assumption $\mathcal{E}<\infty$, hence $0$ is accessible for $V$, we have $h_\theta^-(0)<\infty$ for any $\theta>0$, see \eqref{hittingtimeV0}, and $f_\theta^{+}$ is well-defined.  We show that $f_\theta^{+}$ takes also the following Bernstein's form:
\begin{equation}\label{FT}
f_\theta^{+}(z)=\int_0^{\infty}(1-e^{-xz})(-h_\theta^{-})'(x)\ddr x.
\end{equation}
By applying Fubini-Tonelli's theorem, we get
\begin{align*}
\int_0^{\infty}(1-e^{-xz})(-h_\theta^{-})'(x)\ddr x=\int_{0}^{\infty}\int_0^{x}ze^{-uz}\ddr u\, (-h_\theta^{-})'(x)\ddr x&=\int_0^{\infty}\left(h_\theta^{-}(u)-h_\theta^{-}(\infty)\right)ze^{-uz}\ddr u\\
&=f_\theta^{+}(z)-h^-_\theta(\infty).
\end{align*}
This provides \eqref{FT} since by \eqref{hthetainfty0}, we have $h^-_\theta(\infty)=0$. One has plainly $f_\theta^{+}(0)=0$ and by letting $z$ go to $\infty$ in \eqref{FT}, we see that $f_\theta^{+}(\infty)=h_\theta^{-}(0)$. The facts that $f_\theta^{+}$ is non-decreasing and $C^2$ follows readily from \eqref{FT}.

We now establish \eqref{harmonic}. Let $z\in (0,\infty)$. Recall $e_x(z)=e_z(x)=e^{-xz}$ for all $x\in (0,\infty)$ and $\mathscr{L}e_x(z)=\mathscr{A}e_z(x)$ for all $x,z\in (0,\infty)$, see Lemma \ref{lemmadualityLA}. Note that $\mathscr{L}1=0=\mathscr{A}1$, therefore $\mathscr{L}(1-e_x)(z)=\mathscr{A}(1-e_z)(x)$ for any $x,z\in (0,\infty)$. For any $\theta>0$ and $z>0$, by Fubini-Lebesgue's theorem
\begin{align}
\mathscr{L}f_\theta^{+}(z)&=\int_{0}^{\infty}\mathscr{L}(1-e_x)(z)(-h^{-}_\theta)'(x)\ddr x=\int_0^{\infty}\mathscr{A}(1-e_z)(x)(-h^-_\theta)'(x)\ddr x \nonumber\\
&=\int_0^{\infty}\frac{c}{2}x(1-e_z)''(x)(-h_\theta^-)'(x)\ddr x-\int_0^{\infty}\Psi(x)(1-e_z)'(x)(-h_\theta^{-})'(x))\ddr x \label{eqtoplug1}.
\end{align}
By integration by parts
\begin{align}
\int_0^{\infty}&\frac{c}{2}x(1-e_z)''(x)(-h_\theta^-)'(x)\ddr x \nonumber\\
&=\left[(1-e_z)'(x)\frac{c}{2}x (-h_\theta^{-})'(x)\right]_{x=0}^{x=\infty}-\int_0^{\infty}(1-e_z)'(x)\frac{c}{2} \left((-h_\theta^{-})'(x)+x(-h_\theta^{-})''(x)\right)\ddr x \nonumber.
\end{align}
We verify now that the bracket terms above vanish. Observe that
\begin{align*}
\left[(1-e_z)'(x)\frac{c}{2}x (-h_\theta^{-})'(x)\right]_{x=0}^{x=\infty}&=
-\underset{x\rightarrow \infty}{\lim} ze^{-zx}\frac{c}{2}x(h_\theta^{-})'(x)+\underset{x\rightarrow 0}{\lim} ze^{-zx}\frac{c}{2}x(h_\theta^{-})'(x)=0.
\end{align*}
Both limits indeed vanish since otherwise $(h_\theta^{-})'$ would not be integrable near $\infty$ nor near $0$, which would contradict $h_\theta^{-}(\infty)>-\infty$ and $h_\theta^{-}(0)<\infty$.  
Hence
\begin{equation}
\int_0^{\infty}\frac{c}{2}x(1-e_z)''(x)(-h_\theta^-)'(x)\ddr x =\int_0^{\infty}(1-e_z)'(x)\frac{c}{2}\left((h_\theta^{-})'(x)+x(h_\theta^{-})''(x)\right)\ddr x,\label{eqtoplug2}
\end{equation}
and going back to the calculation of $\mathscr{L}f_\theta^{+}(z)$, and recalling the generator $\mathscr{G}$ of $V$, see \eqref{generatorV1}, we have
\begin{align*}
\mathscr{L}f_\theta^{+}(z)&=\int_0^{\infty}(1-e_z)'(x)\left(\frac{c}{2} \big((h_\theta^{-})'(x)+x(h_\theta^{-})''(x)\big)+\Psi(x) (h_\theta^{-})'(x)\right)\ddr x\\
&=\int_0^{\infty}ze^{-zx}\mathscr{G}h_\theta^{-}(x)\ddr x=\theta\int_0^{\infty}ze^{-zx}h_\theta^{-}(x)\ddr x=\theta f_\theta^{+}(z).
\end{align*}
\end{proof}
\begin{lemma}\label{lemmaLTexplosion} 
Assume $\mathcal{E}<\infty$ then for any $\theta>0$ and $z\in (0,\infty)$
\begin{equation}\label{LTzeta_inftylem}
\mathbb{E}_z[e^{-\theta \zeta_\infty}]=\frac{f^+_\theta(z)}{f^+_\theta(\infty)}=\mathbb{E}[e^{-\theta T_0^{\mathbbm{e}_z}}],
\end{equation}
with $\mathbbm{e}_z$ an exponential random variable with parameter $z$ independent of $V$ and $T_0^{\mathbbm{e}_z}$ the first hitting time of $0$ of $V$ started from $\mathbbm{e}_z$. 
\end{lemma}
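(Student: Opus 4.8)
The plan is to use Lemma~\ref{invariantfunctionftheta}, which provides a bounded $\theta$-eigenfunction $f_\theta^+$ of the generator $\mathscr{L}$, to build a bounded martingale and to extract $\mathbb{E}_z[e^{-\theta\zeta_\infty}]$ by optional stopping at the first explosion time. Concretely, set $\tau:=\zeta_0\wedge\zeta_\infty$ and $N_t:=e^{-\theta t}f_\theta^+(Z_t)$. Since $0\leq e^{-\theta t}\leq 1$ and $f_\theta^+$ is bounded by $f_\theta^+(\infty)=h_\theta^-(0)<\infty$ (finiteness coming precisely from $\mathcal{E}<\infty$, which makes $0$ accessible for $V$, see \eqref{hittingtimeV}), the stopped process $(N_{t\wedge\tau})_{t\geq0}$ is uniformly bounded, so optional stopping and bounded convergence will apply without integrability issues.

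The first and main step is to show that $(N_{t\wedge\tau})_{t\geq0}$ is a martingale. The martingale problem $\mathrm{(MP)}_Z$ is only stated for $f\in C^2_c(0,\infty)$, whereas $f_\theta^+$ is merely bounded and $C^2$, so I would proceed by localization. Let $\tau_n:=\inf\{t>0:Z_t\notin(1/n,n)\}$ and choose $f_n\in C^2_c(0,\infty)$ coinciding with $f_\theta^+$ on $[1/n,n]$; applying $\mathrm{(MP)}_Z$ to $f_n$ and using the identity $\mathscr{L}f_\theta^+=\theta f_\theta^+$ on $(0,\infty)$ shows that $(N_{t\wedge\tau_n})_{t\geq0}$ is a martingale for every $n$. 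For $t<\tau$ the path $Z$ on $[0,t]$ stays in a compact subset of $(0,\infty)$, whence $\tau_n\uparrow\tau$; moreover $N_{t\wedge\tau_n}\to N_{t\wedge\tau}$ almost surely, using right-continuity together with the continuity of $f_\theta^+$ up to $\infty$ (the exit values $Z_{\tau_n}$ approach $\infty$ on explosion, forcing $f_\theta^+(Z_{\tau_n})\to f_\theta^+(\infty)$, even when the process reaches $\infty$ by a jump). Passing to the limit with the uniform bound then yields that $(N_{t\wedge\tau})_{t\geq0}$ is a martingale. This localization, and in particular the control of the terminal value at a jump to $\infty$, is where the boundary values $f_\theta^+(0)=0$ and $f_\theta^+(\infty)=h_\theta^-(0)$ from Lemma~\ref{invariantfunctionftheta} do the real work, and it is the step I expect to be the main obstacle.

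Granting the martingale property, I would analyse $\lim_{t\to\infty}N_{t\wedge\tau}$ on each event. On $\{\zeta_\infty<\zeta_0\}$ one has $N_{t\wedge\tau}\to e^{-\theta\zeta_\infty}f_\theta^+(\infty)$; on $\{\zeta_0<\zeta_\infty\}$ one has $Z_{t\wedge\tau}\to0$ and $f_\theta^+(0)=0$ kills the limit; and on $\{\tau=\infty\}$ boundedness of $f_\theta^+$ with $e^{-\theta t}\to0$ again gives $0$. As the minimal process is absorbed once it reaches $0$, $\{\zeta_\infty<\infty\}=\{\zeta_\infty<\zeta_0\}$, so with the convention $e^{-\theta\zeta_\infty}=0$ on $\{\zeta_\infty=\infty\}$ the limit equals $f_\theta^+(\infty)\,e^{-\theta\zeta_\infty}$ on all of $\Omega$. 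Bounded convergence then gives $f_\theta^+(z)=\mathbb{E}_z[N_0]=f_\theta^+(\infty)\,\mathbb{E}_z[e^{-\theta\zeta_\infty}]$, which is the first equality of \eqref{LTzeta_inftylem}. For the second equality I would insert the definition \eqref{fthetaexplicit} together with the hitting-time identity $\mathbb{E}_x[e^{-\theta T_0}]=h_\theta^-(x)/h_\theta^-(0)$ coming from \eqref{hittingtimeV}: dividing $f_\theta^+(z)=\int_0^\infty ze^{-xz}h_\theta^-(x)\,\ddr x$ by $f_\theta^+(\infty)=h_\theta^-(0)$ and recognising $ze^{-xz}\,\ddr x$ as the law of $\mathbbm{e}_z$ yields $f_\theta^+(z)/f_\theta^+(\infty)=\int_0^\infty ze^{-xz}\,\mathbb{E}_x[e^{-\theta T_0}]\,\ddr x=\mathbb{E}[e^{-\theta T_0^{\mathbbm{e}_z}}]$, as claimed.
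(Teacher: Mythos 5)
Your overall strategy --- build a bounded martingale from the eigenfunction $f_\theta^+$, stop at $\zeta_0\wedge\zeta_\infty$, and use bounded convergence with the boundary values $f_\theta^+(0)=0$, $f_\theta^+(\infty)=h_\theta^-(0)$ --- is exactly the paper's, and your treatment of the limit analysis and of the second equality in \eqref{LTzeta_inftylem} matches the paper's proof. But there is a genuine gap at the step you yourself flag as the main one, and it is not merely technical: your localization via compactly supported truncations fails because the generator $\mathscr{L}$ is \emph{non-local}. If $f_n\in C^2_c(0,\infty)$ coincides with $f_\theta^+$ only on $[1/n,n]$, then for $z\in(1/n,n)$ one has, writing $g_n:=f_n-f_\theta^+$ (which vanishes on $[1/n,n]$ together with its derivatives at interior points),
\begin{equation*}
\mathscr{L}f_n(z)=\theta f_\theta^+(z)+\lambda z\,g_n(\infty)+z\int_{n-z}^{\infty}g_n(z+h)\,\pi(\ddr h)
=\theta f_\theta^+(z)-\lambda z\,f_\theta^+(\infty)+z\int_{n-z}^{\infty}\big(f_n-f_\theta^+\big)(z+h)\,\pi(\ddr h),
\end{equation*}
so $\mathscr{L}f_n\neq\theta f_\theta^+$ on $(1/n,n)$: the jump integral and, above all, the killing term $\lambda z\big(f(\infty)-f(z)\big)$ feel the values of $f_n$ on $(n,\infty]$. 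Consequently $(N_{t\wedge\tau_n})$ is \emph{not} the martingale produced by applying $\mathrm{(MP)}_Z$ to $f_n$; what you get is a martingale with an extra compensator term, whose $\lambda z f_\theta^+(\infty)$ part does not vanish as $n\to\infty$ (it is precisely the compensation for the jump to $\infty$, and recovering the right limit from it would require a compensation-formula argument you do not give). A second, related defect: because of overshoot, $Z_{\tau_n}$ need not lie near $\{1/n,n\}$, so $f_n(Z_{t\wedge\tau_n})$ need not equal $f_\theta^+(Z_{t\wedge\tau_n})$ either. Your argument would be fine for a one-dimensional diffusion, where $\mathscr{L}$ is local and exit happens at the boundary of the interval; it breaks for the LCSBP.

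The paper circumvents exactly this obstacle by a different localization device: it first proves that for \emph{every} bounded $f\in C^2(0,\infty)$ with a finite limit at $\infty$ (a class containing $f_\theta^+$), the process $f(Z^{\min}_t)-\int_0^t\mathscr{L}f(Z^{\min}_s)\ddr s$ is a local martingale on $[0,\zeta_0\wedge\zeta_\infty)$. This is done not through $\mathrm{(MP)}_Z$ and truncation, but through the Lamperti construction: It\^o's formula applied to the generalized Ornstein--Uhlenbeck process $R$ of \eqref{OU} (whose driving L\'evy process carries the jump to $\infty$ at the exponential time $\mathbbm{e}_\lambda$) yields the local martingale for $R$, and the continuous time change $C_t$ transports it to $Z^{\min}$, using $\mathscr{L}f(z)=z\mathscr{L}^Rf(z)$. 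Only then does the paper introduce a localizing sequence $(S_m)$ \emph{for this local martingale}, discount by $e^{-\theta t}$ via integration by parts, plug in $\mathscr{L}f_\theta^+=\theta f_\theta^+$, and let $m,t\to\infty$ by boundedness --- after which the endgame is the one you wrote. To repair your proposal you would either have to reproduce this time-change argument, or explicitly control the non-local error terms above (overshoot tail plus the $\lambda z f_\theta^+(\infty)$ compensator) in the limit $n\to\infty$; as written, the martingale property of $(N_{t\wedge\tau_n})$ is asserted on false grounds.
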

\begin{proof}
Let $f\in \mathcal{D}$ and $(M^{\zmin}_t,t\geq 0)$ be the local martingale \eqref{Mzmin}. By ``integration by parts", see e.g. \cite[Section 3.2]{zbMATH07639718},  we get that for any fixed $\theta>0$, the process
\begin{equation}\label{Mtheta}(M^{\theta,\zmin}_t)_{0\leq t<\zeta_0\wedge \zeta_\infty}:=\left(e^{-\theta t}f(Z^{\min}_{t})-\int_0^{t}e^{-\theta s}\big(\mathscr{L}f(Z^{\min}_s)-\theta f(Z^{\min}_s)\big)\ddr s\right)_{0\leq t<\zeta_0\wedge \zeta_\infty}
\end{equation}
is a local martingale. Choose $f=f_\theta^+$ and let $(S_m)_{m\geq 1}$ be a localizing sequence of stopping times for $M^{\theta,\zmin}$. In particular,  one has $S_m\underset{m\rightarrow \infty}{\rightarrow} \zeta_0\wedge\zeta_\infty$ a.s.. Since by \eqref{harmonic}, $\mathscr{L}f_\theta^{+}=\theta f_\theta^{+}$, we see from \eqref{Mtheta} that the process $\left(e^{-\theta t\wedge S_m}f_\theta^{+}(Z^{\min}_{t\wedge S_m}),t\geq 0\right)$ is a martingale, hence for any $z\in (0,\infty)$ and $t\geq 0$,
\[\mathbb{E}_z[e^{-\theta t\wedge S_m}f_\theta^{+}(Z^{\min}_{t\wedge S_m})]=f_\theta^{+}(z).\] 
Since $f_\theta^{+}$ is bounded, one can apply Lebesgue's theorem and by letting $m$ and $t$ go to $\infty$ in the above equality, we have by continuity of $f_\theta^{+}$, 
\[\mathbb{E}_z[e^{-\theta \zeta_0\wedge \zeta_\infty}f_\theta^{+}(Z^{\min}_{ \zeta_0\wedge \zeta_\infty})]=\mathbb{E}_z[e^{-\theta \zeta_0}f_\theta^{+}(0)\mathbbm{1}_{\{\zeta_0<\zeta_\infty\}}]+
\mathbb{E}_z[e^{-\theta \zeta_\infty}f_\theta^{+}(\infty)\mathbbm{1}_{\{\zeta_\infty<\zeta_0\}}]
=f_\theta^{+}(z),\]
for any $z\in [0,\infty)$. Since $f_\theta^{+}(0)=0$ and $\{\zeta_\infty<\infty\}=\{\zeta_\infty<\zeta_0\}$ a.s. ($0$ is absorbing), we finally obtain
\[\mathbb{E}_z[e^{-\theta \zeta_\infty}]=\frac{f_\theta^{+}(z)}{f_\theta^{+}(\infty)}, \ z\in (0,\infty).\] 
For the representation in term of the first hitting time of $0$ of $V$, notice that by \eqref{hittingtimeV} for any $z>0$, \[\mathbb{E}[e^{-\theta T_0^{\mathbbm{e}_z}}]=\int_{0}^{\infty}ze^{-xz}\mathbb{E}_x[e^{-\theta T_0}]\ddr x=\int_{0}^{\infty}ze^{-xz}\frac{h^{-}_\theta(x)}{h^{-}_\theta(0)}\ddr x=\frac{f_\theta^{+}(z)}{f_\theta^{+}(\infty)}.\]
\end{proof}

\subsection{Proof of Theorem \ref{theorem4}}
The most standard method for establishing this kind of duality result is perhaps to apply Ethier-Kurtz's results, see \cite[Theorem 4.11, page 192]{EthierKurtz}, or to show that $g:x\mapsto \mathbb{E}_z[e^{-xZ_t^{\min}}]$ belongs to the domain of the generator of the diffusion $(U_t^{\mathrm{r}},t\geq 0)$ with the boundary $0$ regular reflecting, see Jansen and Kurt \cite[Proposition 1.2]{duality}. Showing the conditions for applying those results does not seem to be an easy task since the boundary behaviors come into play. We will follow another path.

To emphasize the role of the boundary condition at $0$, we denote by $U^{\mathrm{a}}$, $U^{\mathrm{r}}$ and $V^{\mathrm{a}}$, $V^{\mathrm{r}}$ the solutions to \eqref{sdeU} and \eqref{SDEV} with $0$ either regular absorbing or regular reflecting. We will show \eqref{dualityformin} by introducing the Siegmund dual process of $U^{\mathrm{r}}$. An application of Theorem \ref{CoxRoesler} provides the following relationships : for any $x,y\in (0,\infty)$
\begin{equation}\label{siegmundUrVa}
\mathbb{P}_x(U_t^{\mathrm{a}}<y)=\mathbb{P}_y(V_t^{\mathrm{r}}>x) \text{ and }\mathbb{P}_x(U_t^{\mathrm{r}}<y)=\mathbb{P}_y(V_t^{\mathrm{a}}>x).
\end{equation}
Let $\mathbbm{e}_z$ be an exponential random variable with parameter $z$ independent of $U^{\mathrm{r}}$. Note that \[\mathbb{E}_x[e^{-zU^{\mathrm{r}}_t}]=\mathbb{P}_x(\mathbbm{e}_z>U^{\mathrm{r}}_t)=\int_{0}^{\infty}ze^{-zy}\mathbb{P}_y(V_t^{\mathrm{a}}>x)\ddr y,\]
where $V^{a}$ is the diffusion with generator $\mathscr{G}$ and  $0$ is regular absorbing. 

Assume $\mathcal{E}<\infty$ and $2\lambda/c<1$. Let $(Z_t,t\geq 0)$ be the Feller process extending $(Z_t^{\min},t\geq 0)$ with the boundary $\infty$ regular reflecting, see Theorem \ref{backgroundtheorem}-ii). Recall that $Z$ satisfies the duality relationship \eqref{duality1} with the process $U^{\mathrm{a}}$. We introduce the resolvent of $Z$, $\mathcal{R}^q_Z$ defined on $B_b([0,\infty])$ the space of bounded Borelian functions on $[0,\infty]$. An application of the strong Markov property at time $\zeta_\infty$ yields for any function $f$ vanishing at $\infty$
\begin{align}\label{extendedresolventZ}
\mathcal{R}^q_Zf(z)&:=\mathbb{E}_{z}\left(\int_0^{\infty}e^{-q t}f(Z_t)\ddr t\right)=\mathcal{R}_{\zmin}^qf(z)+\mathbb{E}_{z}\left(\int_{\zeta_\infty}^{\infty}e^{-q t}f(Z_t)\ddr t\right)\nonumber \\
&=\mathcal{R}_{\zmin}^qf(z)+\mathbb{E}_{z}[e^{-q \zeta_\infty}]\mathcal{R}_Z^{q}f(\infty), \ z\in [0,\infty]
\end{align}
where $\mathcal{R}_{\zmin}^qf(z)$ is the resolvent of the minimal process $Z^{\min}$.
Let $e_x(z)=e_z(x)=e^{-xz}$. By the dualities with the auxiliary processes $U^{\mathrm{a}}$ and $V^{\mathrm{r}}$, for the extended process: Let $0<z<\infty$ then
\begin{align}\label{dualresolvent}
\mathcal{R}_Z^qe_x(z)&:=\int_0^{\infty}e^{-q t}\mathbb{E}_{z}\left[e_x(Z_t)\right]\ddr t\nonumber\\
&=\int_0^{\infty}e^{-q t}\mathbb{E}_{x}\left[e_z(U^{\mathrm{a}}_t)\right]\ddr t \qquad \qquad \qquad \qquad\text{ (by the Laplace duality \eqref{duality1})} \nonumber\\
&=\int_0^{\infty}e^{-q t}\mathbb{P}_{x}\left(\mathbbm{e}_z>U^{\mathrm{a}}_t\right)\ddr t \nonumber \\
&=\int_0^{\infty}\!\ddr y\, ze^{-yz}\int_0^{\infty}e^{-q t}\mathbb{P}_{y}\left(V^{\mathrm{r}}_t>x\right)\ddr t \qquad\text{ (by the Siegmund duality \eqref{duality2})}  \nonumber
\\
&=\int_0^{\infty}\!\ddr y\, ze^{-yz}\mathcal{R}^q_{V^{\mathrm{r}}}\mathbbm{1}_{(x,\infty)}(y),
\end{align}
where $(V^{\mathrm{r}}_t,t\geq 0)$ is the Siegmund dual diffusion of $(U^{\mathrm{a}}_t,t\geq 0)$ which is reflected at $0$ and $\mathcal{R}^q_{V^{\mathrm{r}}}$ is its resolvent. Similarly as in \eqref{extendedresolventZ}, for any $f$ such that $f(0)=0$, one has the decomposition
\begin{align}\label{extendedresolventV}
\mathcal{R}^q_{V^{\mathrm{r}}}f(y)&=\mathcal{R}_{V^{\mathrm{a}}}^qf(y)+\mathbb{E}_{y}[e^{-q T_0}]\mathcal{R}^q_{V^{\mathrm{r}}}f(0),\ y\in(0,\infty)
\end{align}
with $\mathcal{R}^q_{V^{\mathrm{a}}}$ the resolvent of the process $(V^{\mathrm{a}}_t,t\geq 0)$ the minimal process with generator $\mathscr{G}$ (i.e. the process absorbed at the boundary $0$). Moreover by Theorem \ref{theorem2}, $\mathbb{E}_z[e^{-q\zeta_\infty}]=\mathbb{E}[e^{-qT_0^{\mathbbm{e}_z}}]$, by \eqref{extendedresolventZ}, \eqref{dualresolvent}, we get for $x,z\in (0,\infty)$:
\begin{align*}
\mathcal{R}_{\zmin}^qe_x(z)&=\int_0^{\infty}\ddr y\,ze^{-yz}\mathcal{R}^q_{V^{\mathrm{r}}}\mathbbm{1}_{(x,\infty)}(y)-\mathbb{E}[e^{-qT_0^{\mathbbm{e}_z}}]\mathcal{R}^q_{V^{\mathrm{r}}}\mathbbm{1}_{(x,\infty)}(0)\\
&=\int_0^{\infty}\ddr y\, ze^{-yz}\left(\mathcal{R}^q_{V^{\mathrm{a}}}\mathbbm{1}_{(x,\infty)}(y)+\mathbb{E}_y[e^{-qT_0}]\mathcal{R}^q_{V^{\mathrm{r}}}\mathbbm{1}_{(x,\infty)}(0)\right)-\mathbb{E}[e^{-qT_0^{\mathbbm{e}_z}}]\mathcal{R}^q_{V^{\mathrm{r}}}\mathbbm{1}_{(x,\infty)}(0)\\
&=\int_0^{\infty}\ddr y\,ze^{-yz}\mathcal{R}^q_{V^{\mathrm{a}}}\mathbbm{1}_{(x,\infty)}(y)\\
&=\int_0^{\infty}\ddr y\,ze^{-yz}\int_0^{\infty} e^{-qt}\mathbb{P}_y(V_t^{\mathrm{a}}>x)\ddr t\\
&=\int_0^{\infty}\ddr y\,ze^{-yz}\int_0^{\infty} e^{-qt}\mathbb{P}_x(y>U_t^{\mathrm{r}})\ddr t \qquad \text{ (by the Siegmund duality \eqref{siegmundUrVa})}\\
&=\mathbb{E}_x\left(\int_0^{\infty}e^{-qt}e^{-zU_t^{\mathrm{r}}}\ddr t\right)=\mathcal{R}^q_{U^{\mathrm{r}}}e_z(x).
\end{align*}
Since the functions $t\mapsto \mathbb{E}_z[e^{-xZ_t^{\min}}]$ and $t\mapsto \mathbb{E}_x[e^{-zU_t^{\mathrm{r}}}]$ share the same Laplace transform, they coincide for almost all $t\geq 0$. Moreover, because the sample paths of $Z^{\min}$ are right-continuous, see Section \ref{lamperticonstruction}, and those of the diffusion $U^r$ are continuous, these functions are respectively right-continuous and continuous. They thus coincide for all $t\geq 0$ and we get the following Laplace duality: for any $x,z\in (0,\infty)$ and $t\geq 0$
\begin{equation}\label{dualminproof}\mathbb{E}_z[e^{-xZ_t^{\min}}]=\mathbb{E}_x[e^{-zU_t^{\mathrm{r}}}].
\end{equation}

\qed

\subsection{Proof of Theorem \ref{theorem3}}
Assume $\mathcal{E}<\infty \ \text{ and } \ 2\lambda/c<1$. Let $Z$ be the LCSBP with $\infty$ regular reflecting and $V$ be its bidual process with $0$ regular reflecting. We study here the local time at $\infty$ of $Z$ with the help of that of $V$ at $0$.

We start by a lemma which provides a relationship between the resolvents of $Z$ and $V$. Recall $\mathcal{R}_Z^{\theta}$ and $\mathcal{R}_V^{\theta}$ the resolvents of  $Z$ and $V$. 
\begin{lemma}\label{lemmaextended-dualityresolvent} Let $g$ be an integrable function on $[0,\infty)$. Set for any $x,z\geq 0$ 
\[G(x):=\int_x^\infty g(u)\ddr u \text{ and }F(z):=\int_0^\infty (1-e^{-uz})g(u)\ddr u,\]
then for any $\theta>0$
\begin{equation}\label{resolventdual}\mathcal{R}_Z^{\theta}F(\infty)=\mathcal{R}_V^{\theta}G(0).
\end{equation}
\end{lemma} 
\noindent \textit{Proof of Lemma \ref{lemmaextended-dualityresolvent}}. Note first that for any $t\geq 0$, $\mathbb{E}_0[G(V_t)]=\int_0^{\infty}g(u)\mathbb{P}_0(u\geq V_t)\ddr u$. By \eqref{joiningdualsatboundary2}, one has for any $t\geq 0$, $\mathbb{P}_0(u\geq V_t)=\mathbb{E}_\infty(1-e^{-uZ_t})$.
Hence for any $\theta>0$, by Fubini-Lebesgue's theorem
\begin{align*}
\mathcal{R}_V^{\theta}G(0)=\int_0^{\infty}e^{-\theta t}\mathbb{E}_0[G(V_t)]\ddr t&=\int_0^\infty\int_0^\infty e^{-\theta t}g(u)\mathbb{P}_0(u\geq V_t)\ddr u\ddr t\\
&=\int_0^\infty\int_0^\infty e^{-\theta t}g(u)\mathbb{E}_\infty(1-e^{-uZ_t})\ddr u \ddr t\\
&=\int_0^\infty e^{-\theta t}\mathbb{E}_\infty[F(Z_t)]\ddr t=\mathcal{R}_Z^{\theta}F(\infty).
\end{align*}
\qed

Denote by $(L_t^Z,t\geq 0)$ and $(L_t^V,t\geq 0)$ the local times at $\infty$ and $0$ respectively of the processes $Z$ and $V$. We are going to show that they have the same law by establishing that their inverse local times have the same Laplace exponent. 

We will apply some fundamental results due to Blumenthal and Getoor, see \cite{zbMATH03205726}, explaining how the $\theta$-potential operators of the local time of a Hunt process\footnote{The processes $Z$ and $V$ being Feller with compact state space $[0,\infty]$, they belong to this class.} can be associated to a specific family of $\theta$-excessive functions and how one can relate the Laplace exponent of the inverse local time to this family. 

In our setting, dealing first with the process $Z$, we define for any $\theta>0$, within the notation of \cite{zbMATH03205726}, 

\begin{align}\label{PhiPsiZ}
\Phi_Z^{1}:z\mapsto \mathbb{E}_z[e^{-\zeta_\infty}] \text{ and } \Psi_Z^{\theta}:=1-(\theta-1)\mathcal{R}_Z^{\theta}\Phi_Z^{1}, 
\end{align}
By \cite[Theorem 1.2]{zbMATH03205726}, the local time $(L_t^{Z},t\geq 0)$ at $\infty$ of $Z$ satisfies for any $z\geq 0$ and any $\theta>0$ 
\[\mathbb{E}_z\left(\int_0^\infty e^{-\theta t}\ddr L_t^{Z}\right)=\Psi_Z^{\theta}(z).\]
Denote by $\tau^Z$ the inverse of the local time $L^{Z}$, that is $\tau_x^{Z}:=\inf\{t>0: L^{Z}_t>x\}$ for any $x\geq 0$.  The process $(\tau_x^{Z},0\leq x<\xi)$ is a subordinator with life-time $\xi$, and a certain Laplace exponent $\kappa_Z$, that is to say, for all $\theta>0$ and $x\geq 0$, $\kappa_Z$ satisfies $\mathbb{E}_\infty(e^{-\theta \tau_x^{Z}})=e^{-x\kappa_Z(\theta)}$. Recall also that $\tau_x^{Z}=\infty$ for $x\geq \xi$ a.s..  One has 
\begin{align*}
\Psi_Z^{\theta}(\infty)&=\mathbb{E}_\infty\left[\int_{0}^{\infty}e^{-\theta t}\ddr L_t^{Z}\right]=\mathbb{E}_\infty\left[\int_{0}^{\infty}e^{-\theta \tau^Z_x}\ddr x\right]=\frac{1}{\kappa_Z(\theta)},
\end{align*}
where the second equality above is obtained by change of variable, see for instance \cite[Proposition 4.9, Chapter 0]{MR1725357}. Hence for any $\theta>0$, $\kappa_Z(\theta)=1/\Psi_Z^\theta(\infty)$. We refer the reader to \cite[Theorem 2.1]{zbMATH03205726} for more details.

Introduce now the analogue $\theta$-excessive functions for $V$ at the boundary $0$:  for any $\theta>0$, \begin{align}\label{PhiPsiV} \Phi_V^{1}:x\mapsto \mathbb{E}_x[e^{-T_0}] \text{ and }  \Psi_V^{\theta}:=1-(\theta-1)\mathcal{R}_V^{\theta}\Phi_V^{1} \end{align} 
with $T_0$ the first hitting time of $0$ of the diffusion $V$. Denote by $\tau^V$ the inverse of the local time $L^{V}$ of $V$ at $0$ and by $\kappa_V$ its Laplace exponent. We have similarly $\kappa_V(\theta)=1/\Psi_V^\theta(0)$ for any $\theta>0$. It only remains to verify that for any $\theta>0$, \begin{equation}\label{keyidentitylocaltime1}
\Psi_Z^{\theta}(\infty)=\Psi_V^\theta(0).
\end{equation} 
We see plainly from the definitions in \eqref{PhiPsiZ} and \eqref{PhiPsiV} that the identity \eqref{keyidentitylocaltime1} is equivalent to
\begin{equation}\label{keyidentitylocaltime2}
\mathcal{R}^{\theta}_Z\Phi_Z^1(\infty)=\mathcal{R}^{\theta}_V\Phi_V^1(0).
\end{equation} 
By Lemma \ref{lemmaLTexplosion}, Equation \eqref{hittingtimeV} and the definitions of $\Phi_Z^{1}$ and $\Phi_V^{1}$ in \eqref{PhiPsiZ} and \eqref{PhiPsiV},
\[\Phi_Z^{1}(z)=\frac{f_1^+(z)}{f_1^{+}(\infty)} \text{ and } \Phi_V^{1}(x)=\frac{h_1^-(x)}{h_1^{-}(0)}.\]
Moreover by Lemma \ref{invariantfunctionftheta},  
$f^+_1(\infty)=h^-_1(0)$, the identity \eqref{keyidentitylocaltime2} is thus equivalent to
\begin{equation}\label{identityresolvent}
\mathcal{R}_Z^{\theta}f_1^{+}(\infty)=\mathcal{R}_V^{\theta}h_1^{-}(0).
\end{equation}
Recall the expression of $f_\theta^+$ in \eqref{FT} and set for any $u,v,z\geq 0$   \[g(u):=(-h_1^-)'(u),\ 
G(v):=\int_{v}^{\infty}g(u)\ddr u=h_1^{-}(v) \text{ and } F(z):=\int_{0}^{\infty}(1-e^{-uz})g(u)\ddr u=f_1^+(z).\]
By applying Lemma \ref{lemmaextended-dualityresolvent}, we see that \eqref{resolventdual} provides \eqref{identityresolvent}. Finally \eqref{keyidentitylocaltime1} is established and we have shown that the inverse local times of $V$ and $Z$ have the same Laplace exponent, namely $\kappa_Z=\kappa_V$.

We now study the killing term in $\kappa_Z$. Denote by $n_V$ the excursion measure of $V$ away from the point $0$. It is known, see Vallois et al. \cite[Theorem 5-(i)]{zbMATH05503396} and Mallein and Yor \cite[Exercice 13.6]{MalleinYor},  that the supremum $M:=\underset{t\leq \ell}{\sup}\ \omega(t)$ of an excursion $\omega$ of $V$ has  ``law" under the excursion measure given by 
$$n_V\left(M>x\right)=\frac{C}{S_V(x)}  \text{ for any } x>0,$$
where $C\in (0,\infty)$ is some constant
and $S_V$ is the scale function of $V$, vanishing at $0$, namely
\begin{equation}\label{SVvanishingat0}
S_V(x)=\int_0^{x}\frac{\ddr u}{u}e^{-\int_1^u \frac{2\Psi(v)}{cv}\ddr v}, \ x\in [0,\infty).
\end{equation}

The killing term in the inverse local time of $V$ is $\kappa_V(0)=n_V(\ell=\infty)$ where $\{\ell=\infty\} $ is the set of excursions with infinite lifetime, i.e those which do not hit $0$. Necessarily these excursions have transient paths drifting towards $\infty$, otherwise, since $0$ is accessible  from any point in $(0,\infty)$, the infinite excursion of $V$ would eventually hit $0$. Hence, using also that $\kappa_Z=\kappa_V$, we obtain
\begin{equation}\label{killingkappa}
\kappa_Z(0)=n_V(\ell=\infty)=n_V(M=\infty)=\frac{C}{S_V(\infty)},
\end{equation}
with, recalling \eqref{SVvanishingat0},  \[S_V(\infty)=\int_0^\infty  \frac{\ddr x}{x}e^{-\int_{1}^{x}\frac{2\Psi(y)}{cy}\ddr y}.\] 
It remains to see that that the condition $\Psi(x)\geq 0$ for large enough $x$ is necessary and sufficient for $S_V(\infty)<\infty$ (i.e. for $\infty$ to be attracting for $V$). We first show that it is sufficient. Let $x_1>1$ be such that $\Psi(x)\geq \Psi(x_1)\geq 0$ for all $x\geq x_1$. The convexity of $\Psi$ and the fact that $\Psi(0)\leq 0$ ensure that for any $q\in (0,1)$ and $x\in (0,\infty)$, $\Psi(qx)\leq q\Psi(x)$, hence $\Psi(qx)/qx\leq \Psi(x)/x$ and the map $x\mapsto \Psi(x)/x$ is nondecreasing. Therefore, $\Psi(x)/x\geq \Psi(x_1)/x_1\geq 0$ for all $x\geq x_1$. This entails \begin{equation}\label{SZfinite}
\int_{x_1}^\infty  \frac{\ddr x}{x}e^{-\int_{1}^{x}\frac{2\Psi(y)}{cy}\ddr y}\leq C\int_{x_1}^\infty  \frac{\ddr x}{x}e^{-\frac{2\Psi(x_1)}{cx_1}x}<\infty,\end{equation}
with $C=e^{-\int_{1}^{x_1}\frac{2\Psi(y)}{cy}\ddr y}>0$. The integrability near $0$ holds by the assumption $\mathcal{E}<\infty$ and we therefore have $S_V(\infty)<\infty$, namely $\kappa_V(0)>0$. For the necessary part, assume that $-\Psi$ is the Laplace exponent of a subordinator, then $\Psi(x)\leq 0$ for all $x\geq 0$ and plainly for any $x_1\geq 1$
\[\int_{x_1}^\infty  \frac{\ddr x}{x}e^{-\int_{1}^{x}\frac{2\Psi(y)}{cy}\ddr y}\geq \int_{x_1}^\infty  \frac{\ddr x}{x}=\infty,\]
so that $\kappa_Z(0)=0$. This concludes the proof of Theorem \ref{theorem3}.
\qed
\subsection{Proof of Theorem \ref{Hausdorff}}\label{subsecdimI}
According to Theorem \ref{theorem3}, the inverse local time at $\infty$ of $Z$, $(\tau_x^{Z}, 0\leq x<L_\infty^Z)$, has the same law as that of the diffusion $V$ at level $0$, $(\tau_x^{V},0\leq x<L_\infty^V)$. This ensures that the random sets 
$\mathcal{I}=\overline{\{t>0: Z_t=\infty\}}=\overline{\{\tau^Z_x, 0\leq x< L_\infty^Z\}}$ and $\mathcal{Z}=\overline{\{t>0: V_t=0\}}=\overline{\{\tau^V_x, 0\leq x< L_\infty^V\}}$ have the same law and therefore the same Hausdorff and Packing dimensions (which are respectively the lower and upper indices of the Laplace exponent $\kappa_Z$, see Bertoin \cite[Chapter 5]{subordinators} for these notions). We will then be able to apply some general results on diffusions in order to compute these two fractal dimensions of the random set $\mathcal{I}$. 

Recall Feller's construction of the diffusion $V$ reflected at $0$, see Section \ref{sec:fellerconstruction} and Equation \eqref{extended-diff}. We see that the zero set of $V$ coincides with that of the diffusion in natural scale $X$ whose speed density measure is $m_X=1/h$ given by $\eqref{densityspeedmeasure}$ with $S=S_V$. We now turn to the study of the dimension of this set.
 
Recall the expression of the scale function of $V$ in \eqref{SVvanishingat0}. For all $y\in [0,\infty)$
\[h(y)=S_V'(S_V^{-1}(y))^2 S_V^{-1}(y)=\frac{1}{S_V^{-1}(y)}e^{\frac{4}{c}\int_{S_V^{-1}(y)}^{1}\frac{\Psi(u)}{u}\ddr u}.\]
The speed measure of $X$ satisfies  \[M_X(x):=\int_0^x m_X(y)\ddr y=\int_0^{x}S_V^{-1}(|y|)e^{-\frac{4}{c}\int_{S_V^{-1}(|y|)}^{1}\frac{\Psi(u)}{u}\ddr u}\ddr y, \ x\in \mathbb{R}.\]
Moreover for $x\geq 0$, 
\begin{align}\label{defF}
M_X(x)-M_X(-x)=2M_X(x)&=\frac{2}{c}\int_0^{x}S_V^{-1}(y)e^{-\frac{4}{c}\int_{S_V^{-1}(y)}^{1}\frac{\Psi(u)}{u}\ddr u}\ddr y\nonumber\\
&=\frac{2}{c}\int_0^{S_V^{-1}(x)}ze^{-\frac{2}{c}\int_z^{1}\frac{\Psi(u)}{u}\ddr u}S_V'(z)\ddr z \nonumber\\
&=\frac{2}{c}\int_0^{S_V^{-1}(x)}e^{-\frac{2}{c}\int_z^{1}\frac{\Psi(u)}{u}\ddr u}\ddr z.
\end{align}

We are now in the setting  of \cite[Corollary 9.8]{subordinators}  where formulas for the Hausdorff dimension and the packing dimension of the zero-set of $X$ are provided with the help of the speed measure of $X$ (in the notation $M_X=F$, $m_X=f$ of \cite{subordinators}). In our case, this yields, by using the identity~\eqref{defF}:

\begin{align}\label{Hausdorffdimgeneralformula}
\mathrm{dim}_H(\mathcal{Z})&=\sup\left\{\rho\leq 1: \underset{x\rightarrow 0+}{\lim}x^{1-1/\rho}\int_0^{S_V^{-1}(x)}e^{-\frac{2}{c}\int_{z}^{1}\frac{\Psi(u)}{u}\ddr u}\ddr z=\infty\right\} \text{ a.s.}
\end{align}
and
\begin{align}\label{Packingdimgeneralformula}
\mathrm{dim}_P(\mathcal{Z})&=\inf\left\{\rho\leq 1: \underset{x\rightarrow 0+}{\lim}x^{1-1/\rho}\int_0^{S_V^{-1}(x)}e^{-\frac{2}{c}\int_{z}^{1}\frac{\Psi(u)}{u}\ddr u}\ddr z=0\right\} \text{ a.s.}
\end{align}

We now study $x\mapsto \int_0^{S_V^{-1}(x)}e^{-\frac{2}{c}\int_{z}^{1}\frac{\Psi(u)}{u}\ddr u}\ddr z$. Recall $\Psi$ in \eqref{LK} and $\Psi(0)=-\lambda$. Set $\Psi_0$ such that $\Psi(u)=-\lambda+\Psi_0(u)$ for all $u\geq 0$. One has \[e^{-\frac{2}{c}\int_{z}^{1}\frac{\Psi(u)}{u}\ddr u}=z^{-2\lambda/c}e^{-\frac{2}{c}\int_z^{1}\frac{\Psi_0(u)}{u}\ddr u}=:z^{-2\lambda/c}L(z).\]
Note that $\Psi_0(u)\underset{u\rightarrow 0}{\longrightarrow} 0$, so that by Karamata's representation theorem, \cite[Theorem 1.3.1]{Bingham87}, $L$ is a slowly varying function at $0$. 
Moreover, by Karamata's theorem, \cite[Proposition 1.5.8]{Bingham87}, for some $C\in (0,\infty)$ 
\begin{equation}\label{keyequivalent}
\int_0^{S_V^{-1}(x)}e^{-\frac{2}{c}\int_{z}^{1}\frac{\Psi(u)}{u}\ddr u}\ddr z=\int_0^{S_V^{-1}(x)}z^{-2\lambda/c}L(z)\ddr z \underset{x\rightarrow 0+}{\sim}C S_V^{-1}(x)^{-2\lambda/c+1}L(S_V^{-1}(x))
\end{equation}
and by definition of $S_V(x)$:
\[S_V(x)=\int_0^{x}\ddr z \frac{z^{2\lambda/c-1}}{L(z)}\underset{x\rightarrow 0+}{\sim}C\frac{x^{2\lambda/c}}{L(x)}.\]
We now divide the proof in two cases according to $\lambda>0$ or $\lambda=0$. 

Assume first $\lambda>0$, so that $S_V$ is regularly varying at $0$ with index $2\lambda/c$ and so is $S_V^{-1}$ with index $c/2\lambda$, see \cite[Theorem 1.5.12]{Bingham87}, namely there is a function $L'$, slowly varying at $0$ such that
$$S_V^{-1}(x) \underset{x\rightarrow 0+}{\sim} \frac{x^{c/2\lambda}}{L'(x)}.$$
Hence
\begin{equation}\label{lastequiv}S_V^{-1}(x)^{-2\lambda/c+1}L(S_V^{-1}(x))=\left(\frac{x^{c/2\lambda}}{L'(x)}\right)^{-2\lambda/c+1}L\left(\frac{x^{c/2\lambda}}{L'(x)}\right)=:x^{c/2\lambda-1}L''(x).
\end{equation}
Therefore  
\[\underset{x\rightarrow 0+}{\lim}x^{1-1/\rho+c/2\lambda-1}L''(x)=\begin{cases}\infty & \text{ if }\rho<\frac{2\lambda}{c}\\
0 & \text{ if } \rho>\frac{2\lambda}{c}.\end{cases}\]
Finally by combining the asymptotic equivalences \eqref{lastequiv}, \eqref{keyequivalent}, we see from \eqref{Hausdorffdimgeneralformula} and \eqref{Packingdimgeneralformula} that almost surely \[\mathrm{dim}_H(\mathcal{Z})=\mathrm{dim}_P(\mathcal{Z})=\frac{2\lambda}{c}.\]
Assume now $\lambda=0$. The function $S_V$ being increasing and slowly varying, its inverse $S_V^{-1}$ is an increasing rapidly varying function at $0$, see \cite[Theorem 2.4.7]{Bingham87}, i.e for $t>1$, \begin{equation}\label{rapidlyvarying} S_V^{-1}(x)/S_V^{-1}(tx)\underset{x\rightarrow 0+}{\longrightarrow} 0.
\end{equation}
Moreover $S_V^{-1}$ has limit $0$ at $0$ and by \eqref{rapidlyvarying}, for any $\beta\in \mathbb{R}$, $S_V^{-1}(x)x^{\beta}\underset{x\rightarrow 0+}{\longrightarrow} 0$. Equation \eqref{keyequivalent} being valid for $\lambda=0$,  we see that any $\rho>0$ satisfies
\[\underset{x\rightarrow 0+}{\lim}x^{1-1/\rho}S_V^{-1}(x)L(S_V^{-1}(x))=0,\]
hence $\mathrm{dim}_H(\mathcal{Z})=\sup\{\emptyset\}=0$ and $\mathrm{dim}_P(\mathcal{Z})=\inf\{[0,1]\}=0$ almost surely. Joining the two cases, we have  that almost surely
\[\mathrm{dim}_H(\mathcal{Z})=\mathrm{dim}_P(\mathcal{Z})=2\lambda/c.\]
We conclude since the random sets $\mathcal{Z}$ and $\mathcal{I}$ have the same law. \qed
\subsection{Proof of Theorem \ref{theoremdualinsiden}}
We still work under the assumption $\mathcal{E}<\infty \text{ and }2\lambda/c<1$. Recall $\mathcal{R}_Z^q$ the $q$-resolvent of the LCSBP $Z$ with $\infty$ regular reflecting, see \eqref{extendedresolventZ}. Let $q>0$. According for instance to \cite[Equation (7), page 120]{Bertoin96}, the excursion measures are satisfying for any $f,g\in B_b([0,\infty])$ such that $f(\infty)=0$ and $g(0)=0$,
\begin{equation} \label{fromnZtoRZ}
n_Z\left(\int_{0}^{\zeta}e^{-qu}f(\epsilon(u))\ddr u\right)=\kappa_Z(q)\mathcal{R}_Z^qf(\infty)
\end{equation}
and 
\begin{equation} \label{fromnVtoRV}
n_V\left(\int_{0}^{\ell}e^{-qu}g(\omega(u))\ddr u\right)=\kappa_V(q)\mathcal{R}_V^qg(0)
\end{equation}
with $\kappa_Z$ and $\kappa_V$  the Laplace exponents of the inverse local times of $Z$ at $\infty$ and of $V$ at $0$. Theorem \ref{theorem3} ensures that $\kappa_Z=\kappa_V$ and we have seen in Equation \eqref{dualresolventinfinity}, that $\mathcal{R}_Z^qe_x(\infty)=\mathcal{R}_V^q\mathbbm{1}_{(x,\infty)}(0)$. Let $x\in (0,\infty)$ be fixed and pick $f(z)=e^{-xz}$, for all $z\in [0,\infty]$ and $g(v)= \mathbbm{1}_{(x,\infty)}(v)$, we get the first targeted identity
\begin{equation} \label{dualitymeasurecor}
n_Z\left(\int_{0}^{\zeta}e^{-qt}e^{-x\epsilon(t)}\ddr t\right)=n_V\left(\int_{0}^{\ell}e^{-qt}\mathbbm{1}_{(x,\infty)}\big(\omega(t)\big) \ddr t\right), \quad q,x\in (0,\infty). 
\end{equation}
By letting $q$ go to $0$ and monotone convergence, we get the following equality:
\begin{equation} \label{LTexcmeasure}
n_Z\left(\int_{0}^{\zeta}e^{-x\epsilon(t)}\ddr t\right)
=n_V\left(\int_{0}^{\ell}\mathbbm{1}_{(x,\infty)}(\omega(t))\ddr t\right).
\end{equation}
Recall $M_V$ the speed measure of $V$ in \eqref{speedmeasureV} and that for any measurable positive function $f$, the invariant measure $M_V$ satisfies  $\int f \ddr M_V=C n_V\left(\int_{0}^{\ell}f(\omega(t))\ddr t\right)$ for a certain positive constant $C$, see \cite[Chapter XIX.46]{Dellacherieetal}, we see that the left-hand side in \eqref{LTexcmeasure} is 
\[\frac{1}{C}M_V(x,\infty)=\frac{1}{C}\int_x^{\infty}m_V(\ddr v)=\frac{1}{C}\int_x^{\infty}e^{\int_{1}^{v}\frac{2\Psi(u)}{cu}\ddr u}\ddr v.\]
It is clearly infinite when $-\Psi$ is not the Laplace exponent of a subordinator, as in this case $\Psi$ is positive in a neighbourhood of $\infty$. When $-\Psi$ is the Laplace exponent of a subordinator, the following necessary and condition was found in \cite{MR3940763}, see Lemma 5.3-1 and its proof, for $M_V(x,\infty)<\infty$ to be finite. Denote by $\delta$ the drift of $-\Psi$ and set
$$\textbf{(A)} \qquad  \delta=0 \text{ and } \bar{\pi}(0)+\lambda\leq c/2.$$  
\begin{itemize}
\item[i)] If $\textbf{(A)}$ is satisfied then for all $x\geq 0$, $M_V(x,\infty)=\infty$ and $Z$ is null recurrent.
\item[ii)] If $\textbf{(A)}$ is not satisfied then for all $x\geq 0$, $M_V(x,\infty)<\infty$. (Integrability at $0$ of $m_V$ comes from the assumption $\frac{2\lambda}{c}<1$) and $Z$ is positive recurrent.
\end{itemize}
This finishes the proof as $\textbf{(A)}$ is not satisfied as soon as  one of the conditions in \eqref{condposrec} holds, see \cite[Remark 3.8]{MR3940763}.
\qed
\begin{remark} Heuristically, when condition $\textbf{(A)}$ holds, the jumps in the LCSBP have a so small activity that the quadratic drift has enough time to push the path  close to $0$. Once at a low level, the process will take an infinite mean time for exploding. This explains the null recurrence.  
\end{remark}
\subsection{Proof of Theorem \ref{infimuminexcursion}} Our objective is to find the law of the infimum of an excursion. 
We start by finding the law of the infimum $Z^{\min}$ started from an arbitrary $z\in (0,\infty)$. 
\begin{lemma}\label{infimumZmin} For any $a\geq 0$, and $z>a$, 
\begin{equation}\label{infzmin} \mathbb{P}_z(\underset{u\geq 0}{\inf} Z_u^{\min}\leq a)=\frac{S_Z(z)}{S_Z(a)}.
\end{equation}
\end{lemma}
\begin{proof}
Recall from Section \ref{lamperticonstruction} that $(Z_t^{\min},t\geq 0)$ has the same law as a time-changed transient generalized Ornstein-Uhlenbeck process $(R_t,t\geq 0)$ stopped when exiting $(0,\infty)$. The Laplace transform of the first passage time below $a$ of the process $(R_t,t\geq 0)$, $\sigma_a:=\inf\{t\geq 0: R_t\leq a\}$, is given by
\begin{equation}\label{LaplacetransformhittingOU}\mathbb{E}_{z}[e^{-\mu \sigma_a}]=\frac{g_\mu(z)}{g_\mu(a)},
\end{equation}
with for all $\mu>0$ and $x\in [0,\infty)$, 
$g_\mu(x):=\int_{0}^{\infty} x^{2\mu/c}e^{-zx}\frac{1}{x}e^{-\int_{1}^{x}\frac{2\Psi(y)}{cy}\ddr y}\ddr x$. We refer the reader to Shiga \cite[Theorem 3.1]{MR1061937} and \cite[Equation (4.5) page 13]{MR3940763}. One can recognize at the right of $x^{2\mu/c}e^{-zx}$ in the integrand, the derivative of the scale function of $V$, namely
\[s_V(x)=\frac{1}{x}e^{-\int_{1}^{x}\frac{2\Psi(u)}{cu}\ddr u}.\]
By the time-change construction, see Section \ref{sec:Zmindef}, if one lets $\mu$ go to $0$ in \eqref{LaplacetransformhittingOU}, we get 
\begin{equation*}\label{infzmin} \mathbb{P}_z(\underset{u\geq 0}{\inf} Z_u^{\min}\leq a)=\mathbb{P}_z(\sigma_a<\infty)=\underset{\mu \rightarrow 0}{\lim}\frac{g_\mu(z)}{g_\mu(a)}=\frac{\int_{0}^{\infty}e^{-xz}s_V(x)\ddr x}{\int_{0}^{\infty}e^{-xa}s_V(x)\ddr x}=\frac{S_Z(z)}{S_Z(a)}.\end{equation*}
\end{proof}
\begin{remark} One can also verify more directly that  $\mathbb{P}_z(\zeta^{\mathrm{min},-}_a<\infty)=\frac{S_Z(z)}{S_Z(a)}$, with \begin{equation}\label{zeta_a}\zeta^{\mathrm{min},-}_a:=\inf\{t\geq 0: \zmin_t\leq a\},
\end{equation} by checking that $\mathscr{L}S_Z=0$ in the same way as in the proof of Theorem~\ref{theorem2}. Since $Z$ has no negative jumps and $S_Z(z)\leq S_Z(a)<\infty$ for all $z\geq a$, it follows that $(S_Z(\zmin_{t\wedge \zeta_a^{-}}),t\geq 0)$ is a bounded martingale. This, in turn, yields the desired identity
$\mathbb{P}_z(\zeta^{\mathrm{min},-}_a<\infty)=\frac{S_Z(z)}{S_Z(a)}$.
\end{remark}
The next lemma establishes Theorem \ref{infimuminexcursion}.
\begin{lemma} Assume $\mathcal{E}<\infty$ and $\frac{2\lambda}{c}<1$. Let $I=\inf_{0\leq s<\zeta} \epsilon(s)$. For all $a,b\in (0,\infty),$ \[S_Z(a)n_Z\big(I\leq a\big)=S_Z(b)n_Z\big(I\leq b\big).\]
\end{lemma}
\begin{proof}
Let $0<b<a$. Recall $\zeta^{\mathrm{min},-}_{a}$ in \eqref{zeta_a}. By the strong Markov property under the excursion measure at the stopping time $\zeta_{a}^{\mathrm{min},-}$, \cite[Theorem 3.28, Chapter III, pages 102-103]{zbMATH03205726}, and the absence of negative jumps, one has 
\begin{align*}
n_Z(I\leq b)&=n_Z(I\circ \theta_{\zeta^{\mathrm{min},-}_a}\leq b, \zeta^{\mathrm{min},-}_a<\infty)\\
&=n_Z(I\leq a)\mathbb{P}_a(\zeta^{\mathrm{min},-}_b<\infty)\\
&=n_Z(I\leq a)\frac{S_Z(b)}{S_Z(a)},
\end{align*}
where the last equality holds by Lemma \ref{infimumZmin}.
\end{proof}

\section{One-dimensional diffusions on $[0,\infty]$ and Siegmund duality} \label{diffusionsec}

This section deals with one-dimensional diffusions on $[0,\infty]$. We study their so-called Siegmund duals. The results presented below may have independent interest apart from the study of LCSBPs. 

Siegmund \cite[Theorem 1]{MR0431386} has established that a standard  positive Markov process $U$ whose boundary $\infty$ is either inaccessible (entrance or natural) or absorbing (exit or regular absorbing) admits a dual process $V$ such that for all $t$, $u$, $v$,  $\mathbb{P}_u(U_t<v)=\mathbb{P}_v(V_t>u)$ if and only if $U$ is \textit{stochastically monotone}, that is to say for any $t\geq 0$ and $y\in (0,\infty)$, the function $x\mapsto \mathbb{P}_x(U_t\leq y)$ is nonincreasing. 

We provide below a study of Siegmund duality in the framework of diffusions. Stochastic monotonicity of one-dimensional diffusions is well-known. It can be established for instance through a coupling $(U^x,U^{x'})$ of two diffusions with same coefficients started from $x$ and $x'$  with $x'\geq x$. By continuity of the paths and the strong Markov property, it follows that almost surely $U_t^x=U_t^{x'}$ for any time $t\geq \tau:=\inf\{t>0:U_t^x=U_t^{x'}\}$. In particular, this implies that $\mathbb{P}(U_t^x\leq U_t^{x'})=1$ for all $t\geq 0$ and \[\mathbb{P}(U_t^{x'}\leq z)= \mathbb{P}(U_t^{x'}\leq z, U_t^{x}\leq U_t^{x'})\leq \mathbb{P}(U_t^{x}\leq z), \ \forall z\in (0,\infty).\]

A sketch of proof of the next theorem  was provided by Cox and R\"osler in \cite[Theorem 5]{MR724061}. Their proof relied on scaling limits of birth-death processes.   We provide an alternative proof and complete Cox and R\"osler's theorem by considering also the framework of attracting, natural, exit or entrance boundaries. We refer also the reader to Liggett \cite[Chapter II, Section 3]{MR2108619}, Kolokol'tsov \cite{MR2858558} and Assiotis et al. \cite[Lemma 2.2]{Assiotis2019} for works on Siegmund duality.

\begin{theorem}[Diffusions and Siegmund duality]\label{CoxRoesler}\
Let $\sigma^2\in C^2(0,\infty)$ strictly positive on $(0,\infty)$ and $\mu\in C^1(0,\infty)$. Let $(U_t,t\geq 0)$ be a diffusion on $[0,\infty]$  with generator acting on any $f\in C^2_c(0,\infty)$ by $$\mathscr{A}f(x):=\frac{1}{2}\sigma^{2}(x)f''(x)+\mu(x)f'(x), \text{ for all } x\in (0,\infty),$$
such that $0$ is either inaccessible (entrance or natural) or absorbing (exit or regular absorbing).\\

Then for any $0< u,v<\infty$ and any $t\geq 0$
\begin{equation}\label{dualdiffusion}
\mathbb{P}_u(U_t<v)=\mathbb{P}_v(V_{t}>u),
\end{equation}
with $(V_t, t\geq 0)$ a diffusion on $[0,\infty]$ whose generator is \begin{equation}\label{generatorV}\mathscr{G}f(x):=\frac{1}{2}\sigma^{2}(x)f''(x)+\left(\frac{1}{2}\frac{\ddr }{\ddr x}\sigma^{2}(x)-\mu(x)\right)f'(x), \end{equation}
for any $f\in C^2_c(0,\infty)$ and $x\in (0,\infty)$.\\

Moreover, the following correspondences for boundaries  of $U$ and $V$ hold:

\begin{table}[h!]
\begin{tabular}{|c|c|c|}
\hline
Feller's conditions & Boundary of $U$ & Boundary of $V$\\
\hline
$S_U(0,x]<\infty \text{ and } M_U(0,x]<\infty$ &  $0$ regular  & $0$ regular \\
\hline
$S_U(0,x]=\infty \text{ and } J_U(0)<\infty$ &  $0$ entrance & $0$ exit \\
\hline
$M_U(0,x]=\infty \text{ and } I_U(0)<\infty$ &  $0$ exit & $0$ entrance \\
\hline
$I_U(0)=\infty, \ J_U(0)=\infty$ &  $0$ natural & $0$ natural \\
\hline
\end{tabular}
\vspace*{4mm}
\caption{Boundaries of $U,V$.}
\label{correspondanceSiegmund}
\vspace*{-5mm}
\end{table}
\noindent 
When the boundary $0$ of both $U$ and $V$ is regular, if one is absorbing then necessarily the other is reflecting. Similar correspondences hold for the boundary $\infty$ by replacing everywhere $0$ by $\infty$.
\smallskip

Assume that $0$ is natural or absorbing for $U$ (namely $0$ is either natural, exit or regular absorbing), then the longterm behaviors of $U$ and $V$ are	 also related as follows:

\begin{table}[htpb]
\begin{tabular}{|c|c|c|}
\hline
Condition &  $U$ &  $V$\\
\hline
$S_U(0,\infty)<\infty$ &  $\infty \text{ and }0$ attracting & positive recurrence \\
\hline
\end{tabular}
\vspace*{4mm}
\caption{Longterm behaviors of $U,V$.}
\label{correspondancelimit}
\end{table}
Lastly, when $\infty$ and $0$ are attracting for $U$, the stationary law of $V$ satisfies 
\[\mathbb{P}(V_\infty\leq x)=\mathbb{P}_x(U_t\underset{t\rightarrow \infty}{\longrightarrow} \infty)=\frac{S_U(0,x]}{S_U(0,\infty)} \in (0,1) \text{ for any } x\geq 0.\]
\end{theorem}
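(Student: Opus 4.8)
The plan is to obtain the dual process abstractly from Siegmund's theorem, identify its generator and boundary behaviour through the classical scale/speed description of one-dimensional diffusions, and finally read off the long-term regime from the finiteness of the total speed measure.

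First I would record that $U$ is stochastically monotone, exactly via the coupling sketched before the statement, so that Siegmund's theorem \cite{MR0431386} produces a Markov process $(V_t,t\ge 0)$ on $[0,\infty]$ satisfying $\mathbb{P}_v(V_t>u)=\mathbb{P}_u(U_t<v)$ for $u,v\in(0,\infty)$. The remaining content is to show that this $V$ is the diffusion with generator $\mathscr{G}$ in \eqref{generatorV} and to match boundaries. To identify $\mathscr{G}$ I would argue at the level of laws. Writing $F(u,v,t):=\mathbb{P}_u(U_t<v)$ and using the transition density $p_t(u,\cdot)$ of $U$ with respect to the speed measure $M_U$, which exists, is symmetric, and solves $\partial_t p_t=\tfrac12\tfrac{\ddr}{\ddr M_U}\tfrac{\ddr}{\ddr S_U}p_t$ (Rogers--Williams \cite{zbMATH01515832}), one has $F(u,v,t)=\mathbb{P}_u(U_t=0)+\int_{(0,v)}p_t(u,w)\,M_U(\ddr w)$. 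Integrating the forward equation over $(0,v)$ turns the right endpoint into $\tfrac12 s_U(v)^{-1}\partial_v p_t(u,v)$ while the current at $0$ cancels $\partial_t\mathbb{P}_u(U_t=0)$; since $p_t(u,v)=m_U(v)^{-1}\partial_v F$, this yields $\partial_t F=\tfrac12 s_U(v)^{-1}\partial_v\!\big(m_U(v)^{-1}\partial_v F\big)=\mathscr{G}_v F$. Thus $v\mapsto F(u,v,t)$ solves the backward equation for $\mathscr{G}$ with data $\mathbf{1}_{\{\cdot>u\}}$, so the abstract dual $V$ is indeed the $\mathscr{G}$-diffusion. As a check, the same operator identity follows from a direct weak computation: for $\phi,\psi\in C_c^\infty(0,\infty)$, integrating by parts shows that both $\int\!\!\int\mathscr{A}_u\mathbf{1}_{\{u<v\}}\,\phi(u)\psi(v)$ and $\int\!\!\int\mathscr{G}_v\mathbf{1}_{\{u<v\}}\,\phi(u)\psi(v)$ equal $\int_0^\infty\big(\tfrac12\sigma^2\phi'+\tfrac12(\sigma^2)'\phi-\mu\phi\big)\psi$.

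The structural heart of the boundary correspondences is a one-line computation showing that Siegmund duality interchanges scale and speed: from \eqref{generatorV} the drift of $V$ is $\mu_V=\tfrac12(\sigma^2)'-\mu$, whence $\tfrac{2\mu_V}{\sigma^2}=\tfrac{(\sigma^2)'}{\sigma^2}-\tfrac{2\mu}{\sigma^2}$ and therefore $s_V=\sigma^2(x_0)\,m_U$ and $m_V=\sigma^2(x_0)^{-1}s_U$, with $s_U,m_U$ as in \eqref{scalefunction}--\eqref{speedmeasure}. Consequently $S_V(0,x]\propto M_U(0,x]$ and $M_V(0,x]\propto S_U(0,x]$, and a Fubini rearrangement of the tests \eqref{IJ} gives $I_V(0)\propto J_U(0)$ and $J_V(0)\propto I_U(0)$. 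Feeding this into Table \ref{Fellerconditions} row by row produces Table \ref{correspondanceSiegmund}: regular stays regular, entrance and exit are exchanged, natural stays natural, and identically at $\infty$ by the same argument with $0$ replaced by $\infty$.

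The step I expect to be the main obstacle is the absorbing-versus-reflecting dichotomy in the regular case, since the scale/speed swap alone does not see it. Here I would again use the boundary analysis of the first step. If $0$ is regular absorbing for $U$ there is genuine absorbed mass, $F(u,0^+,t)=\mathbb{P}_u(U_t=0)>0$, and since $F(u,0^+,t)=\lim_{v\to0^+}\mathbb{P}_v(V_t>u)=\mathbb{P}_{0^+}(V_t>u)$, the dual $V$ leaves $0$ instantaneously, i.e.\ $0$ is reflecting for $V$; conversely when $0$ is inaccessible for $U$ there is no such mass and $V$ cannot instantaneously leave $0$, consistent with exit/entrance. This is precisely where the standing hypothesis that $0$ is inaccessible or absorbing (never reflecting) for $U$ enters: it guarantees that the probability current at $0$ balances the absorbed-mass derivative, so that $F$ solves the clean backward equation and the dual boundary condition is unambiguous. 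The delicate points to pin down are the existence and boundary regularity of the speed-measure density, the legitimacy of differentiating under the integral, and the identification of the scale-current at $0$ with the absorption rate.

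Finally, for Table \ref{correspondancelimit} and the stationary law, assume $0$ and $\infty$ are natural or absorbing for $U$ and both attracting, i.e.\ $S_U(0,\infty)<\infty$ by Table \ref{Attractingconditions}. The optional stopping theorem applied to the bounded scale martingale $S_U(U_{t\wedge\tau})$ gives $\mathbb{P}_x(U_t\to\infty)=S_U(0,x]/S_U(0,\infty)\in(0,1)$. On the dual side the swap yields $M_V((0,\infty))\propto S_U(0,\infty)<\infty$, while the correspondences of the previous paragraphs show that each boundary of $V$ is entrance, reflecting, or non-attracting natural, hence never absorbing; thus $V$ is positive recurrent with invariant probability equal to the normalized speed measure $m_V(x)\,\ddr x/M_V((0,\infty))\propto s_U(x)\,\ddr x/S_U(0,\infty)$. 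Integrating gives $\mathbb{P}(V_\infty<x)=S_U(0,x]/S_U(0,\infty)=\mathbb{P}_x(U_t\to\infty)$. The same identity also drops out of \eqref{dualdiffusion} by letting $t\to\infty$: $\mathbb{P}_x(U_t<v)\to\mathbb{P}_x(U_t\to0)$ for every finite $v>0$, which matches $\lim_t\mathbb{P}_v(V_t>x)=\mathbb{P}(V_\infty>x)$.
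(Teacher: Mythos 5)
Your overall architecture matches the paper's (Siegmund's theorem plus stochastic monotonicity to get an abstract dual, identification of the generator, the scale/speed swap $s_V\propto m_U$, $m_V\propto s_U$ with $I_U=J_V$ feeding Feller's classification, and the long-term regime via $M_V(0,\infty)\propto S_U(0,\infty)$ together with letting $t\to\infty$ in \eqref{dualdiffusion}), but your central identification step is genuinely different. The paper's Lemma \ref{lemmaminV} never touches the transition density: it first shows $V$ is Feller on $(0,\infty)$ (no atoms for $U_s$ plus the strong Feller property of $U$), then proves that $V$ solves the martingale problem for $(\mathscr{G},C^2_c(0,\infty))$ by a weak argument in the style of Bertoin--Le~Gall --- pairing the duality \eqref{dualdiffusion} against $f(x)g(u)\,\ddr x\,\ddr u$ with $F(x)=\int_x^\infty f$, $G(x)=\int_0^x g$, and integrating by parts twice; compactly supported test functions make all boundary terms vanish for free, and well-posedness of the stopped martingale problem then identifies the $\mathscr{G}$-diffusion up to its boundary behavior. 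Your route instead runs the forward equation $\partial_t p_t=\tfrac12\tfrac{\ddr}{\ddr M_U}\tfrac{\ddr}{\ddr S_U}p_t$ for the density of $U$ and integrates it over $(0,v)$. That is a legitimate classical path, and your distributional check that $\iint \mathscr{A}_u\mathbbm{1}_{\{u<v\}}\phi\psi=\iint\mathscr{G}_v\mathbbm{1}_{\{u<v\}}\phi\psi$ is correct, but it is analytically heavier: you yourself list the unresolved technicalities (boundary regularity of $p_t$, differentiation under the integral, the flux balance at $0$), and these are precisely what the paper's weak formulation is designed to avoid. Moreover, ``$v\mapsto F(u,v,t)$ solves the backward equation for $\mathscr{G}$'' does not by itself pin down the dual semigroup when $0$ is regular for $V$; you need a uniqueness/boundary-condition statement, whereas the paper only claims identification of the \emph{stopped} process and treats the boundary separately.

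There is one concrete soft spot in your boundary dichotomy. To show that $0$ regular absorbing for $U$ forces $0$ regular \emph{reflecting} for $V$, you argue that $F(u,0^+,t)=\mathbb{P}_u(U_t=0)>0$ equals $\mathbb{P}_{0^+}(V_t>u)$, so $V$ leaves $0$. This only rules out absorption; it does not exclude a sticky boundary, i.e.\ it does not show the time spent at $0$ has zero Lebesgue measure, which is what ``reflecting'' means here (the theorem's trichotomy at a regular boundary is not just absorbing versus non-absorbing). The paper's Lemma \ref{lemmatable5} closes this with a one-line computation in the opposite direction: letting $x\to0+$ in $\mathbb{P}_x(U_t\geq y)=\mathbb{P}_y(V_t\leq x)$ gives $\mathbb{P}_y(V_t=0)=\lim_{x\to 0+}\mathbb{P}_x(U_t\geq y)=0$ for every $t>0$, since $U$ started arbitrarily close to the absorbing point $0$ cannot be above $y$; hence the occupation time of $V$ at $0$ vanishes identically and $0$ is reflecting. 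Your hinted flux-balance argument could also deliver this, but as written it is not carried out. The final paragraph on Table \ref{correspondancelimit} is fine and essentially coincides with the paper's Lemma \ref{lemmacvdist} (the paper even gives both of your variants: positive recurrence with invariant law the normalized speed measure, and the direct passage $t\to\infty$ in \eqref{dualdiffusion} using $\mathbb{P}_x(U_t\to\infty)=S_U(0,x]/S_U(0,\infty)$ from Karatzas--Shreve, which you replace by optional stopping of the bounded scale martingale).
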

\begin{proof}
Theorem \ref{CoxRoesler} is obtained  by combining  Lemmas \ref{lemmaminV}, \ref{lemmatable5} and \ref{lemmacvdist} established below.
\end{proof}

\begin{remark}\label{speed=scale} Combining the two first lines of Table \ref{correspondanceSiegmund}, we see that $0$ is non-absorbing for $U$ (i.e. $J_U(0)<\infty$, and $0$ is regular or an entrance) if and only if $0$ is accessible for $V$ (i.e. $I_V(0)<\infty$, and $0$ regular or exit).
\end{remark}
\begin{remark} Theorem \ref{CoxRoesler} holds more generally for a diffusion $U$ taking values in an interval $[\ell,r]$ by replacing everywhere $0$ and $\infty$ respectively by $\ell$ and $r$. It suffices indeed to consider a $C^2$ bijective function $\varphi$ mapping $[\ell, r]$ to $[0,\infty]$ and to apply the theorem to the diffusion $(\bar{U}_t,t\geq 0):=(\varphi(U_t),t\geq 0)$.
\end{remark}
The proof of Theorem \ref{CoxRoesler} is divided in several lemmas. We start by identifying the generator of the process $(V_t,t\geq 0)$ satisfying \eqref{dualdiffusion} when it evolves in $(0,\infty)$.
\begin{lemma}\label{lemmaminV} Let $T:=\inf\{t>0: V_t\notin (0,\infty)\}$. The process $(V_{t\wedge T},t\geq 0)$ has for generator $\mathscr{G}$ given in \eqref{generatorV}.
\end{lemma}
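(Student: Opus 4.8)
The plan is to take for granted from Siegmund's theorem (applicable since $U$ is stochastically monotone and $0$ is inaccessible or absorbing, as discussed just above) that the dual $V$ is a genuine Markov process with semigroup $(P^V_t)$ satisfying \eqref{dualdiffusion}, and to read off its infinitesimal behaviour on $(0,\infty)$ by testing against $f\in C^\infty_c(0,\infty)$. The key device is to avoid the non-smooth duality function $H(u,v)=\mathbbm{1}_{\{u<v\}}$, whose $v$-derivatives are merely Dirac masses, by integrating it against $f'$. Writing $f(v)=\int_0^\infty f'(w)\mathbbm{1}_{\{w<v\}}\,\ddr w$ (valid since $f(0)=0$) and using the duality in the form $\mathbb{P}_v(V_t>w)=\mathbb{P}_w(U_t<v)$, I would obtain
\[
\mathbb{E}_v[f(V_t)]=\int_0^\infty f'(w)\,\mathbb{P}_w(U_t<v)\,\ddr w=\int_0^\infty f'(w)\,(P^U_t g_v)(w)\,\ddr w,\qquad g_v:=\mathbbm{1}_{(0,v)},
\]
which transfers the time evolution of $V$ onto the explicitly known diffusion $U$.

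Next I would differentiate in $t$. For $t>0$ the map $w\mapsto(P^U_t g_v)(w)$ is $C^2$ on $(0,\infty)$ by interior smoothing of the transition density of $U$, so the backward equation $\partial_t P^U_t g_v=\mathscr{A}(P^U_t g_v)$ holds there; crucially this only probes the interior, so the boundary classification of $U$ at $0$ plays no role in the computation. Integrating by parts and using that $f'$ and its derivatives are compactly supported in $(0,\infty)$, all endpoint terms at $0$ and $\infty$ vanish and the operator transfers onto $f'$ as its formal Lebesgue adjoint $\mathscr{A}^{\dagger}\psi:=\tfrac12(\sigma^2\psi)''-(\mu\psi)'$, giving
\[
\frac{\ddr}{\ddr t}\,\mathbb{E}_v[f(V_t)]=\int_0^\infty (P^U_t g_v)(w)\,\mathscr{A}^{\dagger}f'(w)\,\ddr w.
\]
Letting $t\to0^+$, bounded convergence applies since the integrand is supported in a fixed compact subset of $(0,\infty)$, $|P^U_t g_v|\le1$, and $(P^U_t g_v)(w)\to g_v(w)$ for every $w\ne v$; this yields
\[
\lim_{t\to0^+}\frac{1}{t}\big(\mathbb{E}_v[f(V_t)]-f(v)\big)=\int_0^v \mathscr{A}^{\dagger}f'(w)\,\ddr w.
\]

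Finally I would evaluate this last integral by the fundamental theorem of calculus: the integrand is the $w$-derivative of $\tfrac12(\sigma^2 f')'-\mu f'$, the lower endpoint contributes nothing because $f'\equiv0$ near $0$, and the upper endpoint produces $\tfrac12\sigma^2(v)f''(v)+\big(\tfrac12(\sigma^2)'(v)-\mu(v)\big)f'(v)=\mathscr{G}f(v)$, exactly the operator in \eqref{generatorV}. Since this holds for every $f\in C^\infty_c(0,\infty)$ and $v\in(0,\infty)$, the generator of $V$ restricted to the interior is $\mathscr{G}$, and a standard Dynkin/optional-stopping argument upgrades this pointwise statement to: $(V_{t\wedge T})$ solves the martingale problem for $\mathscr{G}$ on $C^2_c(0,\infty)$. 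As a sanity check, the heuristic identity $\mathscr{A}_u H=\mathscr{G}_v H$ applied to $H(u,v)=\mathbbm{1}_{\{u<v\}}$ reproduces the same drift $\tfrac12(\sigma^2)'-\mu$, equivalently the scale--speed swap $s_V=m_U$, $m_V=s_U$ characteristic of Siegmund duality.

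The main obstacle is precisely the non-smoothness of $H$: one cannot legitimately equate $\mathscr{A}_u H$ and $\mathscr{G}_v H$ pointwise, since the relevant second derivatives are derivatives of Dirac masses. The integrated formulation, the interior regularity of $P^U_t g_v$ for $t>0$, and the compact support of $\mathscr{A}^{\dagger}f'$ are what render this distributional calculation rigorous; the two analytic interchanges it requires — differentiation under the integral sign and the passage $t\to0^+$ through the integral — are both justified by bounded convergence on the fixed compact support of $\mathscr{A}^{\dagger}f'$.
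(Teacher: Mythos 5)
Your argument is correct in outline, but it takes a genuinely different route from the paper's proof. The paper smears the duality in \emph{both} variables: it integrates $\mathbb{P}_u(V_s<x)=\mathbb{P}_x(U_s>u)$ against $f(x)g(u)\,\ddr x\,\ddr u$, applies Dynkin's formula to $U$ with the smooth primitive $G$ of $g$, performs two spatial integrations by parts to land on $\mathscr{G}F$, and thereby obtains $\mathbb{E}_u\bigl[F(V_s)-F(u)-\int_0^s\mathscr{G}F(V_t)\,\ddr t\bigr]=0$ only for \emph{almost every} $u$; it then needs (and proves) the Feller property of $V$ on $(0,\infty)$ --- via the strong Feller property of $U$ and atomlessness of the law of $U_t$ --- to upgrade to every $u$, before a $C_c^\infty\to C_c^2$ density argument and well-posedness of the stopped martingale problem conclude. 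You instead fix $v$, test only in the dual variable through $f(v)=\int_0^\infty f'(w)\mathbbm{1}_{\{w<v\}}\,\ddr w$, differentiate in $t$ via the backward equation, and push $\mathscr{A}$ onto $f'$ as its Lebesgue adjoint, getting the generator limit at \emph{every} interior $v$ with no a.e.-to-everywhere step --- a real simplification. A bonus worth making explicit: since $(\mathscr{G}f)'=\mathscr{A}^{\dagger}f'$ and $\mathscr{G}f$ vanishes near $0$, your derivative formula is exactly $\frac{\ddr}{\ddr t}P^V_tf(v)=P^V_t\mathscr{G}f(v)$, which integrates to $P^V_tf-f=\int_0^tP^V_s\mathscr{G}f\,\ddr s$ and then gives the martingale property from the Markov property alone.

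Two points need shoring up, plus one slip. First, you trade the paper's smooth data for indicator data: the claim that $w\mapsto P^U_t\mathbbm{1}_{[0,v)}(w)$ is $C^{1,2}$ in the interior and solves $\partial_t u=\mathscr{A}u$ there is true for these nondegenerate one-dimensional diffusions ($\sigma^2\in C^2$ positive on $(0,\infty)$, $\mu\in C^1$), but it is precisely the kind of interior parabolic regularity statement that must be cited or proved (It\^o--McKean transition densities, or interior Schauder estimates); the paper's Dynkin step only applies $\mathscr{A}$ to $P^U_tG$ with $G\in C^2$. Second, your closing ``standard Dynkin/optional-stopping argument'' presupposes a version of $V$ with right-continuous, measurable paths and the Markov property: Siegmund's theorem by itself delivers a transition function, and the paper's separate proof of the Feller property of $V$ is exactly what licenses passing from the semigroup identity to the martingale problem and identifying $(V_{t\wedge T})$ with the minimal $\mathscr{G}$-diffusion; you should either import this from Siegmund's construction of a standard process or reprove it as the paper does. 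The slip: $g_v$ should be $\mathbbm{1}_{[0,v)}$, not $\mathbbm{1}_{(0,v)}$ --- when $0$ is accessible and absorbing for $U$, the event $\{U_t<v\}$ charges $\{U_t=0\}$, so your first display is off by the absorbed mass; none of your interior computations is affected, but the definition matters since $0$ absorbing is one of the standing cases. Also, strictly speaking the $t\to 0^+$ step should pass through $\mathbb{E}_v[f(V_t)]-f(v)=\int_0^t\frac{\ddr}{\ddr s}\mathbb{E}_v[f(V_s)]\,\ddr s$, using continuity at $t=0$ and the uniform bound by the $L^1$-norm of $\mathscr{A}^{\dagger}f'$; your bounded-convergence remark covers this but it deserves a line.
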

\begin{proof}
We start by establishing that the process $V$, satisfying the duality relationship \eqref{dualdiffusion}, which states that for all $s\geq 0$, $\mathbb{P}_v(V_s>u)=\mathbb{P}_u(U_s< v)$ for all $u,v\in (0,\infty)$,  is Feller. Namely for any bounded continuous function $f$ on $(0,\infty)$ and $s\geq 0$,
$P^V_sf(w)\underset{w\rightarrow v}{\longrightarrow} P^V_sf(v)$. It suffices to show that for all $u,v\in (0,\infty)$,  $\mathbb{P}_w(V_s>u)\underset{w\rightarrow v}{\longrightarrow}\mathbb{P}_v(V_s>u)$ and $\mathbb{P}_v(V_s=u)=0$. On the one hand, under our assumptions, for any $s>0$, the law of $U_s$ has no atom in $(0,\infty)$, see Section \ref{backgroundspeedscale}. The map $$v\mapsto \mathbb{P}_v(u<V_s)=\mathbb{P}_u(U_s<v),$$ is therefore continuous on $(0,\infty)$. Also, by the strong Feller property of $U$, see e.g. Azencott \cite[Proposition 1.11]{zbMATH03459686}, $u\mapsto \mathbb{P}_u(U_s<v)$ is also continuous, hence for any $u,v\in (0,\infty)$,  $$\mathbb{P}_v(V_s>u)=\underset{\epsilon \rightarrow 0}{\lim}\ \mathbb{P}_v(V_s>u+\epsilon)= \mathbb{P}_v(V_s\geq u),$$ which yields $\mathbb{P}_v(V_s=u)=0$. 

We now show that $V$ has generator $\mathscr{G}$. We will show that $V$ satisfies the martingale problem $\text{(MP)}_V$ associated to $(\mathscr{G},C^2_c(0,\infty))$, see Section \ref{sec:MPdiffusion}, namely:
\[\text{For any }F\in C^2_c(0,\infty), \text{ the process } (M^F_t)_{t\geq 0}:=\left(F(V_t)-\int_{0}^{t}\mathscr{G}F(V_s)\ddr s, t\geq 0\right) \text{ is a martingale}.\]
Our arguments are adapted from those in Bertoin and Le Gall \cite[Theorem 5]{LGB2}. We refer also to \cite[Section 6, page 36]{FoMaMa2019} where the case of branching Feller diffusions is treated.

Let $g\in C^1(0,\infty)$ and $f\in C^\infty_c(0,\infty)$. Set $G(x)=\int_{0}^{x}g(u)\ddr u$ and $F(x)=\int_{x}^{\infty}f(t)\ddr t$. By Fubini-Lebesgue's theorem $$\int_{0}^{\infty}\int_{0}^{\infty}g(u)f(x)\mathbbm{1}_{\{x\geq u\}}\ddr u\ddr x=\int_{0}^{\infty}g(u)F(u)\ddr u =\int_{0}^{\infty}f(x)G(x)\ddr x,$$
and $$\int_{0}^{\infty}f(x)\mathbb{P}_u(V_s< x)\ddr x=\mathbb{E}_u[F(V_s)], \ \int_{0}^{\infty}g(u)\mathbb{P}_x(U_s>u)\ddr u=\mathbb{E}_x[G(U_s)].$$
Recall $\mathbb{P}_u(V_s< x)=\mathbb{P}_x(U_s> u)$. Then, integrating this with respect to $f(x)g(u)\ddr x \ddr u$
provides 
\begin{align*}
\int_{0}^{\infty}\ddr u g(u)\mathbb{E}_u[F(V_s)-F(u)]=\int_{0}^{\infty}\ddr x f(x)\mathbb{E}_x[G(U_s)-G(x)].
\end{align*}
Since $(U_s,s\geq 0)$ has generator $\mathscr{A}$ then \[\mathbb{E}_x[G(U_s)-G(x)]=\int_{0}^{s}\mathscr{A}P^U_tG(x)\ddr t.\]
Hence
\begin{align*}
\int_{0}^{\infty}\ddr x f(x)\mathbb{E}_x[G(U_s)-G(x)]&=\int_{0}^{\infty}\ddr x f(x)\int_{0}^{s}\mathscr{A}P^U_tG(x)\ddr t.
\end{align*}
Since $f$ has a compact support, so does $x\mapsto |f(x)\mathscr{A}P_t^UG(x)|$ and the function $(t,x)\mapsto f(x)\mathscr{A}P_t^UG(x)$
is integrable on $(0,s)\times (0,\infty)$. Therefore, we get by applying Fubini-Lebesgue's theorem

\begin{align*}
\int_{0}^{\infty}\ddr x f(x)\int_{0}^{s}\mathscr{A}P^U_tG(x)\ddr t&=\int_{0}^{s} \ddr t\int_{0}^{\infty}\ddr x f(x)\mathscr{A}P^U_tG(x).
\end{align*}
Set $h(x)=P^U_tG(x)$  and $\phi(x)=f'(x)\frac{1}{2}\sigma^{2}(x)+f(x)\left(\frac{1}{2}\frac{\ddr }{\ddr x}\sigma^{2}(x)-\mu(x)\right)$. Note that $h\in C^2(0,\infty)$ and that under our assumptions on the coefficients $ \mu$ and $\sigma$, $\phi\in C_c^1(0,\infty)$. We now compute $\int_{0}^{\infty}\ddr x f(x)\mathscr{A}P^U_tG(x)$. We get by two integration by parts, 
\begin{align*}
&\int_{0}^{\infty}\ddr x f(x)\mathscr{A}h(x)=\int_{0}^{\infty}\ddr x f(x)\left[\frac{1}{2}\sigma^{2}(x)h''(x)+\mu(x)h'(x)\right]\\
&=\left[f(x)\frac{1}{2}\sigma^{2}(x)h'(x)\right]_{0}^{\infty}-\int_{0}^{\infty}\ddr x \left[f'(x)\frac{1}{2}\sigma^{2}(x)+f(x)\frac{1}{2}\frac{\ddr }{\ddr x}\sigma^{2}(x)\right]h'(x)+ \int_{0}^{\infty}\ddr x f(x)\mu(x)h'(x)\\
&=\left[f(x)\frac{1}{2}\sigma^{2}(x)h'(x)\right]_{0}^{\infty}-\int_{0}^{\infty} \phi(x)h'(x)\ddr x\\
&=\left[f(x)\frac{1}{2}\sigma^{2}(x)h'(x)\right]_{0}^{\infty} -\left[\phi(x)h(x)\right]_{0}^{\infty}+\int_{0}^{\infty} \phi'(x)h(x)\ddr x\\
&=\int_{0}^{\infty} \phi'(x)\mathbb{E}_x\left[\int_{0}^{\infty}\ddr ug(u)\mathbbm{1}_{\{u<U_t\}}\right]\ddr x \text{ (since } f \text{ has a  compact support),} \\
&=\int_{0}^{\infty}\ddr u g(u)\int_{0}^{\infty} \phi'(x)\mathbb{P}_u(V_t<x)\ddr x \text{ (by applying Fubini-Lebesgue's theorem and \eqref{dualdiffusion}),}\\
&=-\int_{0}^{\infty}\ddr u g(u)\mathbb{E}_u[\phi(V_t)]=\int_{0}^{\infty}\ddr u g(u)\mathbb{E}_u[\mathscr{G}F(V_t)],
\end{align*}
where in the two last equalities, we applied Fubini-Lebesgue's theorem and used the identity $\mathscr{G}F(x)=-\phi(x)$ for all $x\in (0,\infty)$.

Therefore, for any $g\in C^1(0,\infty)$, $$\int_{0}^{\infty}\ddr u g(u)\mathbb{E}_u\left[F(V_s)-F(u)-\int_{0}^{s}\mathscr{G}F(V_t)\ddr t\right]=0.$$
It follows that \begin{equation}\label{key} \mathbb{E}_u\left[F(V_s)-F(u)-\int_{0}^{s}\mathscr{G}F(V_t)\ddr t\right]=0 \text{ for almost all } u\in (0,\infty).
\end{equation}
Since $V$ satisfies the Feller property on $(0,\infty)$, the map 
$$u\mapsto \mathbb{E}_u\left[F(V_s)-F(u)-\int_{0}^{s}\mathscr{G}F(V_t)\ddr t\right],$$
is continuous on $(0,\infty)$. Hence, by continuity \eqref{key} extends to all $u\in (0,\infty)$. This entails that for all functions of the form $F(x):=\int_x^{\infty}f(u)\ddr u$ with $f\in C_c^\infty(0,\infty)$, namely for all functions $F$ in $C_c^\infty(0,\infty)$, the process $(M_t^{F},t\geq 0)$ is a martingale. 

The claim that it holds for all $F\in C^2_c(0,\infty)$, and then that the process $V$ satisfies $\text{(MP)}_V$, follows from a density argument to go from $C_c^\infty(0,\infty)$ to $C_c^2(0,\infty)$. Indeed, let $F\in C_c^2(0,\infty)$. There is a compact set $H$ and a sequence $(F_n)$ of functions  in $C^\infty_c(0,\infty)$ vanishing on $H^c$ and such that $(F_n)$ converges uniformly towards $F$ as well as the first and second order derivatives. For every $t\geq 0$, the process $M_s^{F_n}-M_s^{F}$ is bounded on $[0,t]$ by a constant $c_n$ which goes to $0$ as $n\rightarrow \infty$. By passing to the limit in the right-hand side of the inequality
\[\left \lvert \mathbb{E}[M_t^F|\mathcal{F}_s]-M_s^F \right\lvert \leq \left\lvert \mathbb{E}[M_t^F|\mathcal{F}_s]-\mathbb{E}[M_t^{F^n}|\mathcal{F}_s]\right\lvert+\left\lvert M_s^{F^n} -M_s^F\right\lvert,\]
we see that $(M_t^{F},t\geq 0)$ is a martingale. The martingale problem is well-posed for the process stopped when reaching its boundaries, see Section \ref{sec:MPdiffusion}, we therefore have established that $V$, up to hitting its boundaries, is a diffusion with generator $\mathscr{G}$.
\end{proof}

We now explain the correspondences between types of boundaries stated in Table \ref{correspondanceSiegmund}. 
\begin{lemma}[Table \ref{correspondanceSiegmund}]\label{lemmatable5} There exists a constant $c_0\in (0,\infty)$ such that
\begin{center}
$S_U=c_0M_V$ and $M_U=\frac{1}{c_0}S_V$ 
\end{center}
and the correspondences in Table \ref{correspondanceSiegmund} hold. 
\end{lemma}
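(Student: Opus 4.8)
The plan is first to read off the scale function and speed measure of the dual diffusion $V$ directly from its generator $\mathscr{G}$ in \eqref{generatorV}, and then to exploit the resulting exchange of the roles of scale and speed to transfer the integral tests of Table \ref{Fellerconditions} from $U$ to $V$.

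First I would compute $s_V$. The drift of $V$ is $\frac12(\sigma^2)'-\mu=\sigma\sigma'-\mu$, so the exponent defining $s_V$ splits as $-\int_{x_0}^x\frac{2\sigma'(u)}{\sigma(u)}\ddr u+\int_{x_0}^x\frac{2\mu(u)}{\sigma^2(u)}\ddr u$. The first integral telescopes to $\log\frac{\sigma^2(x_0)}{\sigma^2(x)}$ and the second equals $-\log s_U(x)$, whence $s_V(x)=\frac{\sigma^2(x_0)}{\sigma^2(x)s_U(x)}=\sigma^2(x_0)\,m_U(x)$, recalling $m_U(x)=\frac{1}{\sigma^2(x)s_U(x)}$. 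Feeding this back into $m_V(x)=\frac{1}{\sigma^2(x)s_V(x)}$ gives $m_V(x)=\frac{1}{\sigma^2(x_0)}s_U(x)$. Setting $c_0:=\sigma^2(x_0)>0$, these density identities are exactly the Stieltjes-measure identities $S_U=c_0M_V$ and $M_U=\frac{1}{c_0}S_V$ claimed in the lemma; since the densities are continuous and strictly positive on $(0,\infty)$, there is nothing further to check about absolute continuity.

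The heart of the argument is then to propagate these identities through the integral tests $I_U,J_U$ of \eqref{IJ}. Two of the four quantities are immediate: $S_U=c_0M_V$ gives $S_U(0,x]<\infty\iff M_V(0,x]<\infty$, and $M_U=\frac{1}{c_0}S_V$ gives $M_U(0,x]<\infty\iff S_V(0,x]<\infty$. For the remaining two I would fix an interior reference point $b$ and read the tests near $0$ as $I_U(0)=\int_{(0,b]}S_U(0,\eta]\,M_U(\ddr\eta)$ and $J_U(0)=\int_{(0,b]}S_U[\eta,b]\,M_U(\ddr\eta)$; unfolding $S_U(0,\eta]=\int_{(0,\eta]}S_U(\ddr\xi)$ and applying Tonelli (all measures are nonnegative),
\[ I_U(0)=\iint_{0<\xi\le\eta\le b}S_U(\ddr\xi)\,M_U(\ddr\eta)=\int_{(0,b]}M_U[\xi,b]\,S_U(\ddr\xi)=\int_{(0,b]}S_V[\xi,b]\,M_V(\ddr\xi)=J_V(0), \]
where the penultimate step substitutes $S_U(\ddr\xi)=c_0M_V(\ddr\xi)$ and $M_U[\xi,b]=\frac{1}{c_0}S_V[\xi,b]$, collapsing the constants. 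The mirror computation starting from $J_U(0)$ yields $J_U(0)=\int_{(0,b]}M_U(0,\xi]\,S_U(\ddr\xi)=I_V(0)$. Hence $I_U(0)=J_V(0)$ and $J_U(0)=I_V(0)$.

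With the dictionary $S_U(0,x]\leftrightarrow M_V(0,x]$, $M_U(0,x]\leftrightarrow S_V(0,x]$, $I_U(0)\leftrightarrow J_V(0)$, $J_U(0)\leftrightarrow I_V(0)$ in hand, each line of Table \ref{correspondanceSiegmund} follows by inspection of Table \ref{Fellerconditions}: regular (both $S_U,M_U$ finite) maps to regular; entrance ($S_U(0,x]=\infty$, $J_U(0)<\infty$) maps to $M_V(0,x]=\infty$, $I_V(0)<\infty$, i.e.\ exit; exit maps to entrance by the symmetric reading; and natural ($I_U(0)=J_U(0)=\infty$) maps to $J_V(0)=I_V(0)=\infty$, i.e.\ natural. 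The argument applies verbatim at $\infty$. The only delicate point I anticipate is bookkeeping, namely pinning down the precise meaning of the compactly written tests \eqref{IJ} near the boundary and confirming the Tonelli interchange, which is licit here since all measures are positive and $\sigma,\mu$ are smooth on $(0,\infty)$. The finer dichotomy between regular absorbing and regular reflecting is not detected by scale and speed and is left to the analysis of the duality relation \eqref{dualdiffusion} itself.
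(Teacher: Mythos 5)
Your proposal is correct and follows essentially the same route as the paper: from the drift $\frac{1}{2}(\sigma^2)'-\mu$ of $V$ one gets $s_V=\sigma^2(x_0)\,m_U$ and $m_V=s_U/\sigma^2(x_0)$, so scale and speed are exchanged with $c_0=\sigma^2(x_0)$, whence $I_U(0)=J_V(0)$ and $J_U(0)=I_V(0)$ (the paper performs the same Fubini interchange, merely leaving it implicit in the identity $I_U(l)=\int_l^x M_V(l,x]\,\ddr S_V(x)=J_V(l)$), and the table is read off exactly as you do. The one item the paper's proof of this lemma contains that you defer is the refinement that $0$ regular absorbing for $U$ forces $0$ regular reflecting for $V$; you correctly note that scale and speed cannot detect it, and the paper settles it in one line from the duality relation \eqref{dualdiffusion}, namely $\mathbb{P}_{0+}(U_t\geq y)=\underset{x\rightarrow 0+}{\lim}\,\mathbb{P}_x(U_t\geq y)=\mathbb{P}_y(V_t=0)=0$, precisely the mechanism you anticipated.
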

\begin{proof}
Let $\mu^V$ be the drift term of $V$, i.e. $\mu^V(y)=\frac{1}{2}\frac{\ddr}{\ddr y}\sigma^2(y)-\mu(y)$. Simple calculations provide
$$s_V(v)=\exp\left(-\int_{v_0}^v\frac{\mu^V(y)}{\sigma^2(y)/2}\ddr y\right)=\frac{\sigma^2(v_0)}{\sigma^{2}(v)}\frac{1}{s_U(v)}.$$
and $S_V(x)=\int_{1}^{x}s_V(v)\ddr v=\sigma^2(v_0)M_U(x)$. Similarly, one has $m_V(x)=\frac{1}{\sigma^2(x)s_V(x)}=\frac{s_U(x)}{\sigma^2(v_0)}$ and $M_V(x)=\frac{1}{\sigma^2(v_0)}S_U(x)$. Recall $I_U$ and $J_V$ in \eqref{IJ}, we get
\[I_U(l)=\int_{l}^{x}M_{V}(l,x]\ddr S_V(x)=J_V(l).\]
Hence the scale function and speed measure are exchanged (up to some irrelevant constants) by Siegmund duality, as well as the Feller integral tests $I_U$ and $J_V$. It only remains to justify that if $U$ has its boundary $0$ regular absorbing then $V$ has its boundary $0$ regular reflecting. The proof will be similar for $\infty$ and we omit it. If $0$ is regular absorbing for $U$, then by the duality relationship \eqref{dualdiffusion},
$$\mathbb{P}_{0+}(U_t\geq y)=\underset{x\rightarrow 0+}{\lim}\mathbb{P}_x(U_t\geq y)=\mathbb{P}_{y}(V_t=0)=0,$$
and therefore $0$ is regular reflecting for $V$. Table \ref{correspondanceSiegmund} follows.  
\end{proof} 
We now study the limit behaviors displayed in Table  \ref{correspondancelimit} and the stationary distribution of $V$ when it exists.
\begin{lemma}[Table \ref{correspondancelimit}]\label{lemmacvdist} Assume that the boundaries $0$ and $\infty$ are absorbing for $U$. The diffusion $V$ admits a nondegenerate limiting distribution on $(0,\infty)$ if and only if $0$ and $\infty$ are attracting for $U$. Moreover, the limiting distribution, if it exists, has for cumulative distribution function $x\mapsto \mathbb{P}(V_\infty\leq x):=\frac{S_U(0,x]}{S_U(0,\infty)}$.
\end{lemma}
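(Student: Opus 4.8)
The plan is to read off the limiting law of $V$ from that of $U$ through the Siegmund duality \eqref{dualdiffusion}. Fixing the starting point $v\in(0,\infty)$ of $V$ and a level $x\in(0,\infty)$, the identity $\mathbb{P}_v(V_t>x)=\mathbb{P}_x(U_t<v)$ converts the question of whether $V_t$ converges in law into a question about the long-term behaviour of $U$ started from $x$, which is entirely controlled by the scale function $S_U$. So first I would compute $\lim_{t\to\infty}\mathbb{P}_x(U_t<v)$ in each regime and then transfer the answer to $V$ via \eqref{dualdiffusion}.

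For the sufficiency direction, assume $0$ and $\infty$ are attracting, i.e. $S_U(0,\infty)<\infty$. Since both boundaries are attracting, $U$ converges almost surely to one of them (the remark following Table \ref{Attractingconditions}), with the classical gambler's-ruin value $\mathbb{P}_x(U_t\to\infty)=S_U(0,x]/S_U(0,\infty)$. On $\{U_t\to 0\}$ one has $U_t<v$ for $t$ large and on $\{U_t\to\infty\}$ one has $U_t>v$ for $t$ large, so $\mathbbm{1}_{\{U_t<v\}}\to\mathbbm{1}_{\{U_t\to 0\}}$ almost surely; bounded convergence then gives $\lim_{t\to\infty}\mathbb{P}_x(U_t<v)=S_U(x,\infty)/S_U(0,\infty)$, independently of $v$. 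Feeding this back into \eqref{dualdiffusion} yields $\lim_{t\to\infty}\mathbb{P}_v(V_t>x)=S_U(x,\infty)/S_U(0,\infty)$, hence $\mathbb{P}(V_\infty\le x)=S_U(0,x]/S_U(0,\infty)$. Because $S_U$ is continuous and increasing, this is a genuine nondegenerate distribution function on $(0,\infty)$ (it increases from $0$ to $1$), so the pointwise convergence of the tails upgrades to weak convergence $V_t\Rightarrow V_\infty$, with a limit not depending on the starting point $v$.

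For the necessity direction I would show that if the boundaries are not both attracting, i.e. $S_U(0,\infty)=\infty$, then the limiting tail $x\mapsto\lim_t\mathbb{P}_x(U_t<v)$ is constant in $x$, which rules out a nondegenerate limit for $V$. If exactly one boundary is attracting, then the same scale-function martingale argument (using that the non-attracting boundary, necessarily natural, is approached with probability $0$) shows $U$ converges almost surely to the attracting boundary, so $\mathbb{P}_x(U_t<v)\to 1$ or $\to 0$ for every $x$; the limiting tail of $V$ is then identically $1$ or $0$ and $V$ degenerates onto a boundary. If neither boundary is attracting (both natural, $S_U(0,x]=S_U[x,\infty)=\infty$), then $S_U(U_t)$ is a genuine local martingale on all of $\mathbb{R}$ and $U$ is recurrent; taking the monotone coupling $(U^{x},U^{x'})$ introduced just before the theorem, the two copies coalesce almost surely, whence $|\mathbb{P}_x(U_t<v)-\mathbb{P}_{x'}(U_t<v)|\le\mathbb{P}(\text{not coalesced by }t)\to 0$. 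Thus any limit of $\mathbb{P}_x(U_t<v)$ is independent of $x$, so again the limiting tail of $V$ is flat and no nondegenerate limiting law on $(0,\infty)$ can occur.

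The main obstacle is this last recurrent sub-case: unlike the attracting cases, there is no almost-sure limit of $U_t$ to feed into bounded convergence, so I rely instead on recurrence of $U$ together with the coalescence of the monotone coupling to force the limiting tail to be constant (equivalently, one may observe that $V$ is then null recurrent, since by Lemma \ref{lemmatable5} its speed measure $M_V$ is proportional to $S_U$ and hence of infinite total mass, so its law spreads out and retains no mass on compacts). Everything else is a routine combination of the duality \eqref{dualdiffusion}, the boundary classification of Tables \ref{Fellerconditions}–\ref{Attractingconditions}, and bounded convergence.
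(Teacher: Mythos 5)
Your proposal is correct, and its two halves line up differently against the paper, which gives \emph{two} proofs of this lemma. Your sufficiency argument is essentially the paper's second proof: almost sure convergence of $U$ to one of its attracting boundaries via \cite[Proposition 5.22]{karatzas}, the gambler's-ruin value $\mathbb{P}_x(U_t\to\infty)=S_U(0,x]/S_U(0,\infty)$, and bounded convergence inside the duality \eqref{dualdiffusion}. Where you genuinely diverge is the necessity direction. The paper's primary route is ergodic-theoretic and uniform over all non-attracting cases: by Lemma \ref{lemmatable5} one has $M_V=c_0^{-1}S_U$, and a conservative one-dimensional diffusion admits a nondegenerate limiting law if and only if its speed measure is finite, in which case the limit is the normalized speed measure \cite[Theorem 54.5, page 303]{zbMATH01515832}; so $S_U(0,\infty)=\infty$ kills the limit at one stroke, including the doubly-natural recurrent case, which the paper's second proof only disposes of with an ``anyway''. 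You instead stay on the $U$ side: with exactly one attracting boundary the limiting tail is identically $0$ or $1$ (fine, and your observation that a non-attracting boundary is necessarily natural under the absorbing hypothesis is correct, since accessibility forces $S_U$ finite at the boundary), and in the recurrent case you use coalescence of the monotone coupling to make $x\mapsto\lim_t\mathbb{P}_x(U_t<v)$ constant. What this buys is a proof that never invokes stationary-distribution theory for $V$; what it costs is the coalescence claim, which is the one step you assert without justification and which is \emph{not} contained in the coupling paragraph preceding Theorem \ref{CoxRoesler} — there the coupling is only used up to the meeting time $\tau$ to get monotonicity, and nothing is claimed about $\tau<\infty$.

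The claim is true but needs an argument, e.g.: run the two copies independently until they meet; before $\tau$, $W_t:=S_U(U^{x'}_t)-S_U(U^{x}_t)$ is a nonnegative continuous local martingale (in the doubly-natural case $S_U$ maps $(0,\infty)$ onto $\mathbb{R}$ and puts both coordinates in natural scale), hence a supermartingale converging a.s., which forces $\langle W\rangle_\infty<\infty$ a.s.; but on $\{\tau=\infty\}$ one has $\langle W\rangle_\infty=\int_0^{\infty}\left[(s_U\sigma)^2(U^{x}_u)+(s_U\sigma)^2(U^{x'}_u)\right]\ddr u=\infty$, since a recurrent diffusion spends infinite occupation time in any fixed compact on which $(s_U\sigma)^2$ is bounded below. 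Hence $\tau<\infty$ a.s. Alternatively, your parenthetical fallback (null recurrence of $V$ because $M_V\propto S_U$ has infinite mass, so $\mathbb{P}_y(V_t\in K)\to 0$ for compact $K\subset(0,\infty)$) is precisely the mechanism of the paper's first proof and could replace the coupling wholesale — but note that ``its law retains no mass on compacts'' is itself a citable theorem about null-recurrent diffusions, not an immediate consequence of recurrence, so either way one nontrivial fact must be quoted or proved at this point.
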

\begin{proof}
Recall that $0$ (respectively $\infty$) is said to be attracting for $U$ if $U$ converges  towards $0$ (respectively $\infty$) with positive probability, see Section \ref{secFellercond}. Both boundaries $0$ and $\infty$ are attracting if and only if $S_U(0,\infty)<\infty$. Moreover, $V$ is positive recurrent if and only if $M_V(0,\infty)<\infty$, and in this case, the limiting distribution of $V$ is the renormalized speed measure of $V$, i.e. $\mathbb{P}(V_\infty\leq x)=\frac{M_V(0,x]}{M_V(0,\infty)}$, see e.g. \cite[Theorem 54.5, page 303]{zbMATH01515832}. Since $S_U=c_0M_V$ for some constant $c_0>0$, we see  that $S_U(0,\infty)<\infty$ is necessary and sufficient for $V$ to be positive recurrent. Letting $t$ go towards $\infty$ in the duality relationship \eqref{dualdiffusion} provides the expression of $\mathbb{P}(V_\infty\leq x)$ in terms of $S_U$.  This allows us to conclude. 
\end{proof}
We provide below a different argument showing Lemma \ref{lemmacvdist} and establishing the convergence in law of $V$ by Siegmund duality.
\begin{proof} (second proof of Lemma \ref{lemmacvdist}).
Recall that $0$ and $\infty$ are assumed to be either natural or absorbing for $U$,  then $$\mathbb{P}_x(U_t\underset{t\rightarrow \infty}{\longrightarrow} \infty)=1-\mathbb{P}_x(U_t\underset{t\rightarrow \infty}{\longrightarrow} 0)=\frac{S_U(0,x]}{S_U(0,\infty)},$$ 
and since $\mathbb{P}_x(U_t\underset{t\rightarrow \infty}{\longrightarrow} 0 \text{ or } U_t\underset{t\rightarrow \infty}{\longrightarrow} \infty)=1$, we have, by Siegmund duality and Lebesgue's theorem
\[\underset{t\rightarrow \infty}{\lim} \mathbb{P}_y(V_t<x)=\underset{t\rightarrow \infty}{\lim} \mathbb{P}_x(U_t>y)=\mathbb{P}_x(U_t\underset{t\rightarrow \infty}{\longrightarrow} \infty)=\frac{S_U(0,x]}{S_U(0,\infty)}.\]
We have here established the convergence in law of $V$ towards $V_\infty$ by duality. If $S_U(0,\infty)=\infty$, then three cases are possible, see \cite[Proposition 5.22]{karatzas}, either $U$ has no limit as $t$ goes to $\infty$, or $U$ converges almost surely towards $0$ or towards $\infty$. In any of those cases, when $S_U(0,\infty)=\infty$, the process $V$ does not have a nondegenerate limiting distribution on $(0,\infty)$ and the proof of Lemma \ref{lemmacvdist} is complete. 
\end{proof}

We mention that the bidual process arises also naturally in the study of certain conditionings of LCSBPs on never becoming extinct; see Foucart, Rivero, and Winter~\cite{FoRivWi2024}. In addition, a broader class of processes -- CSBPs with collisions -- satisfying a duality diagram of the form~\eqref{dualitydiagram} has been introduced by Foucart and Vidmar~\cite{FoucartVidmar}. The bidual process was used there for classifying their longterm behaviors. 
\\
 
\noindent \textbf{Acknowledgements:} Author's research is partially  supported by LABEX MME-DII (ANR11-LBX-0023-01) and the European Union (ERC, SINGER, 101054787). Views and opinions expressed are however those of the author only and do not necessarily reflect those of the European Union or the European Research Council. Neither the European Union nor the granting authority can be held responsible for them. The author is indebted to Matija Vidmar for many stimulating discussions related to this work and is grateful to the referee for a careful and thorough reading of multiple versions of the manuscript.
\\

\noindent \textbf{Data Availability}: Data sharing not applicable to this article as no datasets were generated or analysed during
the current study.\\ 
\textbf{Declarations of interest}: The author declares that he has no known competing financial interests or personal relationships that could have appeared to influence the work reported in this paper.


\begin{thebibliography}{99}
\bibitem{Assiotis2019}
Theodoros Assiotis, Neil O'Connell, and Jon Warren, \emph{Interlacing
  diffusions}, pp.~301--380, Springer International Publishing, Cham, 2019.

\bibitem{zbMATH03459686}
Robert {Azencott}, \emph{{Behavior of diffusion semi-groups at infinity}},
  {Bull. Soc. Math. Fr.} \textbf{102} (1974), 193--240 (English).

\bibitem{Bertoin96}
Jean Bertoin, \emph{{L}\'{e}vy processes}, Cambridge tracts in mathematics,
  Cambridge University Press, Cambridge, New York, 1996.

\bibitem{subordinators}
Jean Bertoin, \emph{Subordinators: examples and applications}, Lectures on
  probability theory and statistics ({S}aint-{F}lour, 1997), Lecture Notes in
  Math., vol. 1717, Springer, Berlin, 1999, pp.~1--91.

\bibitem{zbMATH03205726}
Robert~M. Blumenthal, \emph{Excursions of {Markov} processes}, Basel: Birkh{\"a}user, (1992).

\bibitem{zbMATH03205726}
Robert~M. Blumenthal and Ronald Getoor,
\emph{Local times for {Markov} processes},  Z. Wahrscheinlichkeitstheor. Verw. Geb. 3, 50--74 (1964).

\bibitem{zbMATH03272022}
Robert ~M. Blumenthal and Ronald.~K. Getoor, \emph{Markov processes and potential
  theory}, Pure and {Applied} {Mathematics}, 29. {A} {Series} of {Monographs}
  and {Textbooks}. {New} {York}-{London}: {Academic} {Press}. {X}, 313 p. 140, 1968.

\bibitem{Bingham87}
Nick~H. Bingham, Charles.~M. Goldie, and Jozef~L. Teugels, \emph{Regular variation},
  Encyclopedia of mathematics and its applications, Cambridge University Press,
  Cambridge, Cambridgeshire, New York, 1987, Includes indexes.

\bibitem{LGB2}
Jean Bertoin and Jean-Fran{\c{c}}ois Le~Gall, \emph{Stochastic flows associated
  to coalescent processes. {II}. {S}tochastic differential equations}, Ann.
  Inst. H. Poincar\'e Probab. Statist. \textbf{41} (2005), no.~3, 307--333.

\bibitem{MR1912205}
Andrei~N. Borodin and Paavo Salminen, \emph{Handbook of {B}rownian
  motion---facts and formulae}, second ed., Probability and its Applications,
  Birkh\"auser Verlag, Basel, 2002. 

\bibitem{MR724061}
John~Theodore Cox and Uwe R\"osler, \emph{A duality relation for entrance and
  exit laws for {M}arkov processes}, Stochastic Process. Appl. \textbf{16}
  (1984), no.~2, 141--156. 

\bibitem{Dellacherieetal}
Claude Dellacherie, Bernard Maisonneuve, and Paul-Andr\'e Meyer,
  \emph{Probabilit\'es et potentiel, vol. v. processus de {M}arkov,
  compl\'ements de calcul stochastique}, Hermann, 1992.

\bibitem{Duhalde}
Xan Duhalde, Cl{\'e}ment Foucart, and Chunhua Ma, \emph{On the hitting times of
  continuous-state branching processes with immigration}, Stochastic Process.
  Appl. \textbf{124} (2014), no.~12, 4182--4201.
  
\bibitem{MR1398879}
Richard Durrett, \emph{Stochastic calculus}, Probability and Stochastics
  Series, CRC Press, Boca Raton, FL, 1996, A practical introduction.

\bibitem{EthierKurtz}
Stewart~N. Ethier and Thomas~G. Kurtz, \emph{Markov processes}, Wiley Series in
  Probability and Mathematical Statistics: Probability and Mathematical
  Statistics, John Wiley \& Sons Inc., New York, 1986, Characterization and  convergence. 

\bibitem{FoMaMa2019}
Cl{\'{e}}ment Foucart, Chunhua Ma, and Bastien Mallein, \emph{Coalescences in
  {C}ontinuous-{S}tate {B}ranching {P}rocesses}, Electron. J. Probab.
  \textbf{24} (2019), no. 103.

\bibitem{MR3940763}
Cl\'{e}ment Foucart, \emph{Continuous-state branching processes with
  competition: duality and reflection at infinity}, Electron. J. Probab.
 \textbf{24} (2019), Paper No. 33, 38. 

\bibitem{MR3263091}
Cl{\'e}ment Foucart and Ger{\'o}nimo Uribe~Bravo, \emph{Local extinction in
  continuous-state branching processes with immigration}, Bernoulli \textbf{20}
  (2014), no.~4, 1819--1844. 
  
\bibitem{FoucartVidmar}
Cl\'ement {Foucart} and Matija {Vidmar}, \emph{{Continuous-state branching processes with collisions: first passage times and duality}}, Stochastic Process. Appl. \textbf{167} (2024) 104230.

\bibitem{FoRivWi2024}
Clément Foucart, V\'ictor Rivero, and Anita Winter, \emph{Conditioning the logistic continuous-state branching process on non-extinction via its total  progeny}, Preprint, {arXiv}:2408.14993, 2024. To appear in Ann. App. Probab..

\bibitem{dualityliebook}
Cristian Giardin{\`a} and Frank Redig, \emph{Duality for Markov processes: a
  Lie algebraic approach}, to Appear in Grundlehren der Mathematischen Wissenschaften, Springer Nature Switzerland 2025.

\bibitem{HutzenthalerWakolbinger}
Martin Hutzenthaler and Anton Wakolbinger, Ergodic behavior of locally regulated branching populations,  Ann. Appl. Probab.  (2007) 17, 474--501..

\bibitem{JacodShiryaev}
Jean Jacod and  Albert Shiryaev,
\emph{Limit theorems for stochastic processes} Grundlehren Math. Wiss. 288, (1987), Springer-Verlag. XVIII, 601 p.

\bibitem{duality}
Sabine Jansen and Noemi Kurt, \emph{{On the notion(s) of duality for Markov
  processes}}, Probability Surveys \textbf{11} (2014), 59 -- 120.

\bibitem{MR2858558}
Vassili ~N. Kolokol'tsov, \emph{Stochastic monotonicity and duality of one-dimensional {M}arkov processes}, Mat. Zametki \textbf{89} (2011), no.~5, 694--704. 

\bibitem{karatzas}
Ioannis Karatzas and Steven Shreve, \emph{Brownian motion and stochastic calculus}, Graduate Texts in Mathematics (113), Springer New York, 1987.

\bibitem{zbMATH03736679}
Samuel {Karlin} and Howard~M. {Taylor}, \emph{{A second course in stochastic  processes}}, {New York etc.: Academic Press, A Subsidiary of Harcourt Brace
  Jovanovich, Publishers. XVIII, 542 p.(1981).}, 1981.

\bibitem{Kyprianoubook}
Andreas~E. Kyprianou, \emph{Fluctuations of {L}\'evy processes with applications}, second ed., Universitext, Springer, Heidelberg, 2014, Introductory lectures. 

\bibitem{MR2134113}
Amaury Lambert, \emph{The branching process with logistic growth}, Ann. Appl. Probab. \textbf{15} (2005), no.~2,1506--1535. 

\bibitem{Li-book}
Zenghu Li, \emph{Measure-valued branching {M}arkov processes}, Springer, 2011.

\bibitem{MR2108619}
Thomas~M. Liggett, \emph{Interacting particle systems}, Classics in
Mathematics, Springer-Verlag, Berlin, 2005, Reprint of the 1985 original.

\bibitem{zbMATH03287297}
Petr {Mandl}, \emph{{Analytical treatment of one-dimensional Markov processes}}, vol. 151, Berlin-Heidelberg-New York: Springer Verlag, 1968 (English).

\bibitem{MalleinYor}
Bastien Mallein and Marc Yor, \emph{Exercices sur les temps locaux de semi-martingales continues et les excursions browniennes}, 2016.

\bibitem{zbMATH01478492}
Chris Rogers and David Williams, \emph{Diffusions, {Markov} processes and
  martingales. {Vol}. 1: {Foundations}.}, 2nd ed., Cambridge: Cambridge University Press, 2000 (English).

\bibitem{zbMATH01515832}
Chris Rogers and David Williams, \emph{Diffusions, {Markov} processes, and
  martingales. {Vol}. 2: {It{\^o}} calculus.}, second ed., Cambridge: Cambridge University Press, 2000 (English).

\bibitem{MR1725357}
Daniel Revuz and Marc Yor, \emph{Continuous martingales and {B}rownian motion},
  third ed., Grundlehren der Mathematischen Wissenschaften [Fundamental
  Principles of Mathematical Sciences], vol. 293, Springer-Verlag, Berlin, 1999. 

\bibitem{MR1061937}
Tokuzo Shiga, \emph{A recurrence criterion for {M}arkov processes of
  {O}rnstein-{U}hlenbeck type}, Probab. Theory Related Fields \textbf{85}
  (1990), no.~4, 425--447. 

\bibitem{MR0431386}
David Siegmund, \emph{The equivalence of absorbing and reflecting barrier problems
  for stochastically monotone {M}arkov processes}, Ann. Probability \textbf{4}
  (1976), no.~6, 914--924.

\bibitem{MR0226734}
Martin Silverstein, \emph{A new approach to local times}, J. Math. Mech.
  \textbf{17} (1967/1968), 1023--1054.

\bibitem{zbMATH05503396}
Paavo {Salminen}, Pierre {Vallois}, and Marc {Yor}, \emph{{On the excursion
  theory for linear diffusions}}, {Jpn. J. Math. (3)} \textbf{2} (2007), no.~1,
  97--127 (English).

\bibitem{zbMATH07639718} 
Matija {Vidmar}, \emph{Some characterizations for Markov processes at first passage}, {ALEA, Lat. Am. J. Probab. Math. Stat. 19}, No. 2, 1649--1678 (2022)

\end{thebibliography}
\end{document}